\newcommand{\Abracket}[1]{\left<#1\right>} 
\newcommand{\parenthesis}[1]{\left(#1\right)} 
\newcommand{\braces}[1]{\left\{#1\right\}} 
\newcommand{\R}{\mathbb{R}}
\newcommand{\dd}{\mathop{}\!\mathrm{d}}
\newcommand{\eps}{\varepsilon}
\newcommand{\ii}{\infty}
\newcommand{\graf}[1]{\left\{\begin{array}{ll}#1\end{array}\right.}
\newcommand{\al}{\alpha}
\newcommand{\sg}{\sigma}
\newcommand{\ga}{\gamma}
\newcommand{\om}{\Omega}
\newcommand{\lm}{\lambda}
\newcommand{\pa}{\partial}
\newcommand{\fbi}{{\textbf{(}\mathbf F\textbf{)}_{\mathbf I}}}
\newcommand{\prl}{{\textbf{(}\mathbf P\textbf{)}_{\mathbf \lm}}}
\newcommand{\pqm}{{\textbf{(}\mathbf Q\textbf{)}_{\mathbf \mu}}}
\newcommand{\vl}{\eta_{\sscp \lm}}
\newcommand{\vxi}{\xi}
\newcommand{\sscp}{\scriptscriptstyle}
\newcommand{\dsp}{\displaystyle}
\newcommand{\noi}{\noindent}
\newcommand{\ssl}{\sscp \lm}
\newcommand{\ml}{m_{\sscp \lm}}
\newcommand{\vla}{v^{({\sscp \lm})}}
\newcommand{\gall}{\ga_{\ssl}}
\newcommand{\all}{\al_{\ssl}}
\newcommand{\el}{E_{\ssl}}
\newcommand{\tl}{\tau_{\ssl}}
\newcommand{\pl}{\psi_{\sscp \lm}}
\newcommand{\xil}{\ul}
\newcommand{\ul}{u_{\sscp \lm}}
\newcommand{\wl}{w_{\sscp \lm}}
\newcommand{\rife}[1]{(\ref{#1})}
\newcommand{\ov}[1]{\overline{#1}}
\newcommand{\un}[1]{\underline{#1}}
\newcommand{\mul}{\mu_{\ssl}}
\newcommand{\rl}{\mbox{\Large \textrho}_{\!\sscp \lm}}
\newcommand{\rlq}{(\mbox{\Large \textrho}_{\! \sscp \lm})^{\frac1q}}
\newcommand{\ino}{\int_{\Omega}}
\DeclareMathOperator{\dx}{\dd {x}}
\DeclareMathOperator{\Eigen}{Eigen}
\DeclareMathOperator{\Ker}{Ker}
\DeclareMathOperator{\Span}{Span}
\DeclareMathOperator{\Spect}{Spect}
\newtheorem{thm}{Theorem}[section]
\newtheorem{lemma}[thm]{Lemma}
\newtheorem{prop}[thm]{Proposition}
\newtheorem{cor}[thm]{Corollary}
\newtheorem{rmk}[thm]{Remark}
\title[Constrained-stable solutions]{The Rabinowitz continuum of subcritical Gelfand problems \\ and free boundary-type equations arising in plasma physics}
\thanks{2020 \textit{Mathematics Subject classification:} 35B09, 35B32, 35J61, 35Q99, 35R35, 82D10.}
\author[D. Bartolucci]{Daniele Bartolucci}
\address{Daniele Bartolucci, Department of Mathematics, University of Rome \emph{"Tor Vergata"}, Via della ricerca scientifica n.1, 00133 Roma.}
\email{bartoluc@mat.uniroma2.it}
\author[A. Jevnikar]{Aleks Jevnikar}
\address{Aleks Jevnikar, Department of Mathematics, Computer Science and Physics, University of Udine, Via delle Scienze 206, 33100 Udine, Italy.}
\email{aleks.jevnikar@uniud.it}
\author[J. Wei]{Juncheng Wei}
\address{Juncheng Wei, Department of Mathematics, Chinese University of Hong Kong, Shatin, Hong Kong.}
\email{wei@math.cuhk.edu.hk}
\author[R. Wu]{Ruijun Wu}
\address{Ruijun Wu, School of mathematics and statistics, Beijing Institute of Technology, Zhongguancun South Street No. 5, 100081 Beijing, P.R.China.}
\email{ruijun.wu@bit.edu.cn}
\thanks{ D.B.
is partially supported by the MIUR Excellence Department Project MatMod@TOV
awarded to the Department of Mathematics, University of Rome ``Tor Vergata'' and by PRIN project 2022, ERC PE1\_11,
``{\em Variational and Analytical aspects of Geometric PDEs}'', by INdAM-GNAMPA project ``{\em Analisi qualitativa di problemi differenziali non lineari}'' and by the E.P.G.P. Project sponsored by the University of Rome ``Tor Vergata''.\\
 \indent A.J. is partially supported by INdAM-GNAMPA project ``{\em Analisi qualitativa di problemi differenziali non lineari}'' and PRIN Project 20227HX33Z ``{\em Pattern formation in nonlinear phenomena}''.\\
\indent D.B. and A.J. are member of the INDAM Research Group ``Gruppo Nazionale per l'Analisi Matematica, la Probabilità e le loro Applicazioni''.\\
\indent J.W. is partially supported by GRF fund of RGC of Hong Kong entitled
``\emph{New frontiers in singularity formations of nonlinear partial differential equations}''.
}
\thanks{We would like to express our warmest thanks to P. Korman for very fruitful exchange of ideas about global bifurcation of Gelfand problems.}
\begin{document}

\begin{abstract}
    The qualitative behavior of the Rabinowitz unbounded continuum of
subcritical Gelfand problems is well known on balls in any dimension.
We don't know of any such sharp and detailed description otherwise,
which is our motivation to look for a new approach to the problem. The
underlying idea is to describe solutions of Gelfand problems via
suitably defined constrained problems of free boundary-type arising in
plasma physics and to replace the usual $L^\infty$ norm of the solution
with the energy of the plasma. Toward this goal, we first solve a long
standing open problem of independent interest about the uniqueness of
solutions of Grad-Shafranov type equations. Thus, we exploit these
unique solutions to detect a curve containing both minimal and non
minimal solutions of the associated Gelfand problem. In other words we
come up with a new global parametrization of the Rabinowitz continuum,
the monotonicity of the energy along the branch providing a meaningful
generalization of the classical pointwise monotonicity property of
minimal solutions, suitable to describe non minimal solutions as well.
On a ball in any dimension, we come up as expected with a bell-shaped
profile of the full branch of solutions of the Gelfand problem.
\end{abstract}

\maketitle
{\bf Keywords}: Rabinowitz continuum, Gelfand problems, Grad-Shafranov type equations,
free boundary problems, uniqueness.

\section{Introduction}
\noi Let~$\Omega\subset \R^N$,~$N\ge 2$ be a bounded domain of class~$C^{3}$ and set
$$
p_{_N}=\graf{+\ii,\;N=2 \\ \frac{N}{N-2},\; N\geq 3.}
$$
For $1<p<p_{_N}$ we wish to develop a general method to describe the qualitative behavior of the Rabinowitz unbounded continuum (\cite{Rabinowitz1971some})
of classical solutions of the Gelfand problem
$$
\graf{-\Delta v =\mu (1+v)^p\quad \mbox{in}\;\;\om\\
v>0 \quad \mbox{in}\;\;\om\\
v=0 \quad \mbox{on}\;\;\pa\om
}\qquad \pqm
$$
It is well known (see e.g. \cite{CrandallRabinowitz1975continuation}) that there exists $\mu_*=\mu_*(\om,p)>0$ such that~$\pqm$ has at least two solutions for~$\mu<\mu_*$ (one of which is a minimal solution, the other one a mountain-pass solution), exactly one solution for $\mu=\mu_*$ and no solution at all for $\mu>\mu_*$, the well known branch of minimal solutions being smooth and monotone as a function of $\mu$.
However results of this sort are concerned with the counting of solutions and not with the connectivity and/or with the qualitative behavior of the set of non-minimal solutions.
Besides the general result in \cite{Rabinowitz1971some}, the existence of continua of solutions of~$\pqm$ containing both minimal and non-minimal solutions is well-known for any subcritical~$p<\frac{N+2}{N-2}$ and convex domains, see \cite{Lions1982existence}.
The claim in that case is about the existence of a continua of solutions containing minimal solutions for any $\mu\leq \mu_*$ and non-minimal solutions characterized by the fact that they diverge as $\mu\to 0^+$, but with no reference in general to the monotonicity of~$\|v_\mu\|_{\ii}$ along the branch.

\noi If $\om\subset \R^N$ is a ball it is well-known that solutions are radial (\cite{GNN1979symmetry}) and that~$\pqm$ has exactly two solutions for $\mu<\mu_*$ for any $p\leq \frac{N+2}{N-2}$ (\cite{JosephLundgren1972quasilinear, Korman2012global}).
In particular all solutions are contained in a unique curve originating from $(\mu,v)=(0,0)$ which makes only one turn to the left and then diverges as $\mu\to 0^+$, see \cite{Korman2012global}.
Since $v_{\mu}(0)$ is monotonic increasing along the branch, this shows that the global branch has, in the plane $(\mu,v_{\mu}(0))$, the classical bell-shaped profile, see for example \cite{Lions1982existence,OuyangShi1999exact}.
In dimension $N=2$, a general result in \cite{Holzman1994uniqueness} states that if $\om$ is symmetric and convex with respect to coordinate directions, then all the solutions lie on a curve of the form $t\mapsto (\mu(t),v_{\mu(t)}(0))$, where $v_{\mu(t)}(0)$ is strictly increasing in $t$.
However, with the unique exception of the above mentioned results on balls, it seems that the monotonicity of $\mu(t)$ is not known so far, neither in dimension $N=2$ for $\om$ symmetric and convex with respect to coordinate directions.
See \cite{BJ2022uniqueness} for a rather general result about the qualitative behavior of the Rabinowitz
unbounded continuum, including the global behavior of $\mu(t)$, in case $N=2$ for the exponential nonlinearity.

\

 These are our motivations to try to develop a new general method for the description of the qualitative behavior
of the Rabinowitz unbounded continuum of classical, minimal and non minimal solutions of $\pqm$ with $p<p_{_N}$, extending somehow the classical results about minimal solutions in \cite{CrandallRabinowitz1975continuation}.
Toward this goal we pursue the constrained problem,
$$
\graf{-\Delta \psi =[\al+{\lm}\psi]_+^p\quad \mbox{in}\;\;\om\\ \\
\bigintss\limits_{\om} { [\al+{\lm}\psi]_+^p}=1\\ \\
\psi=0 \quad \mbox{on}\;\;\pa\om
}\qquad \prl
$$
for $p<p_{_N}$, $\lm\geq 0$ and the unknowns $\al\in\R$ and $\psi \in C^{2,r}_{0}(\ov{\om}\,)$.
If $(\al_\lm,\psi_\lm)$ solves $\prl$ for some~$\lm>0$ and $\al_\lm>0$, then $v^{(\lm)}=\frac{\lm}{\al_\lm}\psi_\lm$ solves $\pqm$ for $\mu=\mu_\lm=\lm\al_{\lm}^{p-1}$. Assuming for simplicity $|\om|=1$, the underlying idea of the connection with $\pqm$ is thus to gather enough information to follow
“the” curve of solutions of $\prl$ from $\lm=0$ and $\all=\al_0=1$, up to “the first value” of $\lm$ where $\al_\lm$
approaches $\al=0$ from above, while keeping in particular the monotonicity of a suitably defined energy.

\bigskip

Let $q$ be the H\"older conjugate of~$p$, i.e.~$\frac{1}{p}+\frac{1}{q}=1$.
For a fixed $(\al_\lm,\psi_\lm)$ which solves $\prl$, we will adopt the following notations,
$$
 \rl= [\al_\lm+\lm \psi_\lm]_+^p \quad\mbox{\rm and }\quad {\rlq} =[\al_\lm+\lm \psi_\lm]_+^{p-1}.
$$
Problem $\prl$ is of independent interest due to its relevance to Tokamak's plasma physics (\cite{Freidberg2014IdealMHD, Kadomtsev1996nonlinear, Stacey2012fusion}),
which motivated the lot of work done to understand existence, uniqueness, multiplicity of solutions and existence/non-existence/structure of the free boundary $\pa\{x\in\om\,:\,\al+\lm \psi>0\}$ of $\prl$,
see e.g.~\cite{BandleMarcus1982boundary,BandleSperb1983qualitative,BJ2022uniqueness,BJW2024sharp, Berestycki1980free, CaffarelliFriedman1980asymptotic, CaoPengYan2010multiplicity,FriedmanLiu1995free,LiPeng2015multipeak,Liu2014multiple,SuzukiTakahashi2016critical,Wei2001multiple,Wolansky1996critical} and references quoted therein. In fact $\prl$ is equivalent to the well-known Grad-Shafranov type equation, see~$\fbi$ in section \ref{sect:calc in ball}.
In particular due to the fundamental results in \cite{Berestycki1980free}, it is well known that for any~$\lambda>0$ there exists at least one solution of~$\prl$.
The variational formulation adopted in \cite{Berestycki1980free} is closely related to minimal free energy principles with non-extensive entropy, where $-\lm$ plays the role of the inverse statistical temperature, see Appendix A in \cite{BJW2024sharp} and \cite{CLMP1995special} for further details.

\noi On the other side, due to the constraint in $\prl$, we miss the classical notion of minimal solutions (\cite{CrandallRabinowitz1975continuation}) for this problem.
Remark that this is not just a technical point, as we shortly discuss hereafter.
First of all it has been recently shown in \cite{BJ2022uniqueness,BJW2024sharp} that in a neat interval of values of $\lm$ there exists a unique positive (i.e. $\all>0$) solution of $\prl$ forming a branch $\mathcal{G}_0(\om)$
along which $\all$ is decreasing while a suitably defined energy is increasing, see Theorems A and B below. Actually, still due to the constraint, it is not true in general that any first eigenfunction of the associated linearized operator (see \rife{lineq0.1} below) must be simple and neither that if the first eigenvalue is positive then the maximum principle holds, see \cite{BJ2021global} for an example illustrating this point.
As a consequence, unlike classical branches of Gelfand problems emanating from $(\mu,v)=(0,0)$ (\cite{CrandallRabinowitz1975continuation}), it is not at all clear that $\rl=[\all+\lm \pl]_+^p$ is pointwise
monotonic increasing along $\mathcal{G}_0(\om)$.
A workaround to this problem has been recently found in \cite{BJ2022new,BJ2022uniqueness,BJW2024sharp}, yielding suitable monotonicity property through the bifurcation analysis of $\prl$ for solutions with $\all>0$, again referring to Theorems A and B below.
However, in particular for $N\geq 3$, the requirement of the positivity of $\all$ causes a major obstacle in the understanding of the uniqueness and continuation of branches of solutions.
Motivated by the analysis of $\pqm$, we solve this problem here by introducing a weighted first eigenvalue in the same spirit of \cite{ManesMicheletti1973estensione}, where the weight, which is $\rlq=[\all+\lm \pl]_+^{p-1}$, is allowed to vanish in a “large” set.
This in turn allows the description of a branch of stable (i.e. with positive weighted first eigenvalue) solutions for $\prl$, see Theorem \ref{thm1} below.

\bigskip

We follow the traditional convention to assume that the domain has unit volume, $|\Omega|=1$.
In particular $\mathbb{D}_{N}$ denotes a ball in $\R^N$ centered at the origin of unit volume $|\mathbb{D}_N|=1$, whose radius is denoted by~$R_N$.
For fixed $t\geq 1$ we define
$$
\Lambda(\om,t)=\inf\limits_{w\in H^1_0(\om), w\equiv \!\!\!\!/ \;0}
\dfrac{\ino |\nabla w|^2}{\left(\ino |w|^{t}\right)^{\frac2t}}\,,
$$
which is related to the best constant in the Sobolev embedding $\|w\|_p\leq \mathcal{C}_S(\om,p)\|\nabla w\|_2$ via
\begin{align}\nonumber
 \mathcal{C}_S(\om,p)=\Lambda^{-1/2}(\om,p) \quad \mbox{ for } p\in[1,2p_{_N}).
\end{align}

\noi {\bf Definition.} {\it We say that a solution $(\all,\pl)$ of {\rm $\prl$} is {\bf positive}
{\rm[}resp. {\bf non-negative}{\rm]} if
$\all> 0$ {\rm[}resp. $\all\geq 0${\rm]}}.\\
\noi It has been recently proved in \cite{BJW2024sharp} that the following quantity is well-defined and strictly positive:
$$
\lm_+(\om,p)=
\sup\left\{\delta>0\,:\, \all>0 \mbox{ for any solution } (\all,\pl)
\mbox{ of } \prl \mbox{ with }\lm<\delta\right\}.
$$

\noi {\bf Remark.} {\it We will use the fact that $\lm_+(\om,p)$ is always finite, as readily follows
by the variational formulation of solutions of {\rm $\prl$}. See also{\rm ~\cite[Appendix A]{BJW2024sharp}}.}\\

\noi {\bf Definition.} {\it Let $(\all,\pl)$ be a solution of {\rm $\prl$}.
The {\bf energy} of $(\all,\pl)$ is
$$
\el=\frac12\ino |\nabla \pl|^2=\frac12 \ino  \mbox{\rm$\rl$}\pl.
$$
}

\noi Let us define
$$
\lm_0(\om,p):=\frac{1}{p}\Lambda(\om,2p),\quad  \om\subset \R^N,\; N\geq 2,
$$
and
$$
\lm_1(\om,p):=\left(\frac{8\pi}{p+1}\right)^{\frac{p-1}{2p}}\Lambda^{\frac{p+1}{2p}}(\om,p+1),\quad \om\subset \R^2.
$$

\bigskip

\noi The following results about positive solutions have been recently proved in~\cite{BJ2022uniqueness,BJW2024sharp}.\\
{\bf Theorem A}{(\cite{BJ2022uniqueness})}. {\it Let $N\geq 2$ and $p\in [1,p_N)$.
For any $\lm<\lm_0(\om,p)$ there exists a unique positive solution to {\rm $\prl$},
defining a real analytic curve $\mathcal{G}_0(\om)$,
such that $\al_0=1$, $2 E_0=2 E_0(\om)$ is the torsional rigidity of $\om$ and
$$
\frac{d \all}{d\lm}<0,\quad \frac{d \el}{d\lm}>0,\quad \forall (\all,\pl)\in\mathcal{G}_0(\om).
$$
In particular,
$$
 \lim\limits_{\underset{\lm\to \lm_0(\om,p)^{-}}{(\all,\pl)\in\mathcal{G}_0(\om)}}\all=0
$$
if and only if $p=1$.}

\medskip

\noi The situation is better for $N=2$, where we have a sharp "positivity" estimate and, as a consequence, an unconditional uniqueness result.\\
\noi{\bf Theorem B}{(\cite{BJ2022uniqueness, BJW2024sharp})}. {\it Let $N=2$ and $p\in [1,+\ii)$, then $\lm_+(\om,p)\geq \lm_1(\om,p)$
where the equality holds if and only if either $p=1$ or $\om=\mathbb{D}_2$.
Moreover, for any $\lm<\min\{\lm_0(\om,p),\lm_1(\om,p)\}$ there exists a unique solution to {\rm $\prl$},
defining a
real analytic curve of positive solutions which we denote by~$\mathcal{G}_0(\om)$,
such that $\al_0=1$, $2 E_0=2 E_0(\om)$ is the torsional rigidity of $\om$ and
$$
\frac{d \all}{d\lm}<0,\quad \frac{d \el}{d\lm}>0,\quad \forall (\all,\pl)\in\mathcal{G}_0(\om).
$$}

\noi We tend to believe that $\lm_1(\om,p)>\lm_0(\om,p)$, which is always true at least for $p$ large enough and for $\om=\mathbb{D}_2$, see \cite{BJW2024sharp}.
It is not clear whether or not the sharp positivity estimates about $\lm_+(\om,p)$ obtained in Theorem B can be extended to higher dimensions $N\geq 3$.
For later purposes we also recall the following result in \cite{BJW2024sharp}. Let~$u_0$ be the unique (\cite{GNN1979symmetry}) solution of the Emden equation
\begin{align}\label{Emden0}
    \graf{-\Delta u_0 =u_0^p\quad \mbox{in}\;\;B_1\\
u_0>0 \quad \mbox{in}\;\;B_1\\
u_0=0 \quad \mbox{on}\;\;\pa B_1,
}
\end{align}
and set $$
I_p=\int\limits_{B_{1}} {\dsp {u}_0^p}\dx.
$$
Note that $R_N$ denotes the radius of $\mathbb{D}_{N}$ and let us set,
\begin{align}\label{lmpN}
 \lm_+(\mathbb{D}_{N},p)={I_p^{1-\frac{1}{p}}}{R_N^{-\frac{N}{p}(1-\frac{p}{p_{_N}})}}.
\end{align}

\noi
Then we have,\\

\noi{\bf Theorem C}{(\cite{BJW2024sharp})}. {\it Let $\om=\mathbb{D}_{N}\subset \R^N$, $N\geq 3$. Then, for any $p\in (1,p_{_N})$, {\rm $\prl$} admits a unique solution. Moreover
$\all>0$ if and only if $\lm<\lm_+(\mathbb{D}_N,p)$.}

\bigskip

\noi However, in particular for $N\geq 3$, it is still not clear whether or not the unique positive solutions in Theorem A are in fact the unique solutions of $\prl$, although this is indeed the case for~$\lambda>0$ sufficiently small (\cite{BJW2024sharp}).
We solve this problem here by refined spectral estimates for the linearized problem naturally associated with $\prl$.\\

First of all it can be shown (see section \ref{sec3} and in particular \eqref{lineq0.1}), that the "natural" non-local
eigenvalue problem associated to the linearized equation for $\prl$ evaluated at a fixed solution $(\all,\pl)$ takes the form
$$
-\Delta \phi-\tl \rlq [\phi]_{\ssl}=\sg\rlq [\phi]_{\ssl},
$$
where, here and in the sequel, $\sg\in \R$ denotes an eigenvalue relative to $\phi$,
$$
\tl=\lm p,
$$
and
$$
\Abracket{\phi}_{\ssl}\coloneqq \dfrac{\ino \rlq  \phi}{\ino \rlq},
\qquad
[\phi]_{\ssl}\coloneqq  \phi \,- \Abracket{\phi}_{\ssl},\qquad \forall\,\phi \in L^2(\om).
$$
We can prove that, for any fixed solution $(\all,\pl)$, the weighted first eigenvalue is well defined, which we will always denote by $\sg_1(\all,\pl)$, see Lemma \ref{lem:spectral}.
Moreover, let us define
$$
\lm_*(\om,p)\coloneqq
\sup\left\{\ov{\lm}>0\,:\, \sg_1(\all,\pl)>0 \mbox{ for any solution } (\all,\pl)
\mbox{ of } \prl \mbox{ with }\lm<\ov{\lm}\right\}.
$$

\bigskip

Our first result is a spectral estimate of independent interest, which implies in particular that the unique positive solutions of $\prl$ in Theorem A are in fact the unique solutions of $\prl$.
At least to our knowledge, this is the first unconditional uniqueness result about Grad-Shafranov type equations, solving a long standing open problem which dates back to the pioneering results in \cite{Berestycki1980free}, see~\cite{BJ2022uniqueness,BJW2024sharp} and references quoted therein.
\begin{thm}\label{thm1}
 Let $N\geq 2$ and $p\in [1,p_N)$, then $\lm_*(\om,p)>\lm_0(\om,p)$. Moreovr $\lm_+(\om,p)\geq \lm_0(\om,p)$ where the equality holds if and only if $p=1$, and
for any $\lm\in [0,\lm_*(\om,p))$ there exists a unique solution to {\rm $\prl$},
defining a $C^1$ curve of solutions, denoted by $\mathcal{G}_*(\om)$, such that $\al_0=1$, $2 E_0=2 E_0(\om)$ is the torsional rigidity of $\om$ and
$$
\frac{d \all}{d\lm}<0,\quad \frac{d \el}{d\lm}>0,\quad \forall (\all,\pl)\in\mathcal{G}_*(\om).
$$
\end{thm}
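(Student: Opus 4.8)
The plan is to derive a sharp lower bound on the weighted first eigenvalue $\sg_1$ and then run a global implicit‑function/continuation argument off $\lm=0$, the a priori estimates being furnished by the subcriticality $p<p_{_N}$.

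\emph{Step 1 (the spectral estimate, giving $\lm_*(\om,p)\ge\lm_0(\om,p)$).} By Lemma \ref{lem:spectral}, testing the eigenvalue equation against $\phi$ and using $\ino\rlq[\phi]_\ssl=0$ yields the Rayleigh characterization
\[
\sg_1(\all,\pl)+\tl=\inf\Big\{\tfrac{\ino|\nabla\phi|^2}{\ino\rlq[\phi]_\ssl^2}\ :\ \phi\in H^1_0(\om),\ \ino\rlq[\phi]_\ssl^2>0\Big\}.
\]
Now $\ino\rlq[\phi]_\ssl^2=\ino\rlq\phi^2-\Abracket{\phi}_\ssl^2\ino\rlq\le\ino\rlq\phi^2$, and by H\"older with exponents $q$ and $p$ together with the constraint $\ino\rl=1$ one has $\ino\rlq\phi^2\le(\ino\rl)^{1/q}\|\phi\|_{2p}^2=\|\phi\|_{2p}^2\le\Lambda(\om,2p)^{-1}\ino|\nabla\phi|^2$. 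Hence $\sg_1(\all,\pl)\ge\Lambda(\om,2p)-\tl=p\big(\lm_0(\om,p)-\lm\big)$ for \emph{every} solution of $\prl$, so $\sg_1>0$ whenever $\lm<\lm_0(\om,p)$; this is $\lm_*(\om,p)\ge\lm_0(\om,p)$.

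\emph{Step 2 (compactness and non‑degeneracy).} On a compact $\lm$‑interval in $[0,\lm_*(\om,p))$, any solution of $\prl$ has $-\Delta\pl=\rl\ge0$, hence $\pl\ge0$ and $\|\rl\|_{L^1}=1$; the elliptic bootstrap $L^s\to W^{2,s/p}\hookrightarrow L^{s^*}$ strictly gains integrability at each step precisely because $p<p_{_N}$ (equivalently $\tfrac{N}{N-2}>\tfrac{N(p-1)}{2}$), giving $\|\pl\|_{L^\infty}\le C$, then $\all\le1$ from $1=\ino\rl\ge\all^p$ and $\all\ge-\lm\|\pl\|_\infty$, so $|\all|+\|\pl\|_{C^{2,r}}\le C$ uniformly and the solution set is relatively compact. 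Moreover $\sg_1>0$ makes $L\phi:=-\Delta\phi-\tl\rlq[\phi]_\ssl$ positive definite on $H^1_0(\om)$ (on $\phi$ with $\ino\rlq[\phi]_\ssl^2=0$ one has $\Abracket{L\phi,\phi}=\ino|\nabla\phi|^2$), hence invertible; eliminating $\beta$ through the linearized constraint $\beta=-\lm\Abracket{\phi}_\ssl$ then shows the full Fr\'echet derivative of $(\al,\psi)\mapsto\big(-\Delta\psi-[\al+\lm\psi]_+^p,\ \ino[\al+\lm\psi]_+^p-1\big)$ is an isomorphism at every solution with $\lm<\lm_*(\om,p)$.

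\emph{Step 3 (the curve, uniqueness, monotonicity).} At $\lm=0$, $\prl$ forces $\all=1$ and $\pl$ the torsion function, so $2E_0=\ino|\nabla\psi_0|^2=\ino\psi_0$ is the torsional rigidity; by the implicit function theorem and the compactness of Step 2, this solution continues over all of $[0,\lm_*(\om,p))$ as a $C^1$ graph over $\lm$ without turning points, defining $\mathcal{G}_*(\om)$. Any solution off $\mathcal{G}_*(\om)$ at some $\bar\lm<\lm_*$ would sit on a connected component that, being a $C^1$ graph, neither escapes (Step 2) nor folds/terminates while $\sg_1>0$, hence reaches $\lm=0$ where the solution $(1,\psi_0)$ is unique — so it coincides with $\mathcal{G}_*(\om)$, a contradiction; this gives uniqueness and $\mathcal{G}_0(\om)\subseteq\mathcal{G}_*(\om)$. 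Differentiating $\prl$ along $\mathcal{G}_*(\om)$ yields $L\tfrac{d\pl}{d\lm}=p\rlq[\pl]_\ssl$ and $\dall=-\Abracket{\pl}_\ssl-\lm\Abracket{\tfrac{d\pl}{d\lm}}_\ssl$; on $\mathcal{G}_0(\om)$ one knows $\dall<0,\ \dell>0$ (Theorems A--B), and these open conditions propagate along $\mathcal{G}_*(\om)$ once one excludes $\dall=0$ or $\dell=0$ using this identity and $L>0$ — equivalently, via $\vla=\tfrac{\lm}{\all}\pl$, $\mu_\lm=\lm\all^{p-1}$, $\lm=\mu_\lm(\ino(1+\vla)^p)^{1/q}$, along which $\mathcal{G}_*(\om)$ reparametrizes a portion of the Rabinowitz continuum of $\pqm$ so that $\all=(\ino(1+\vla)^p)^{-1/p}$ decreases and $2\el=\mu_\lm^{-1}(\ino(1+\vla)^p)^{-2}\ino(1+\vla)^p\vla$ increases.

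\emph{Step 4 (equality cases and the main obstacle).} Equality $\lm_*(\om,p)=\lm_0(\om,p)$ would saturate the chain of Step 1 at the first eigenfunction $\phi$ — i.e. $\Abracket{\phi}_\ssl=0$, $\rl\propto\phi^{2p}$ on the plasma set, and $\phi$ an extremal of $\Lambda(\om,2p)$ — and feeding this into $-\Delta\pl=\rl$ and the Euler--Lagrange equation of $\Lambda(\om,2p)$ gives an overdetermined system compatible only for $p=1$; while for $p=1$ the degeneration $\all\to0$ as $\lm\to\lm_0^-$ (Theorem A) forces $\lm_*=\lm_0$ and $\mathcal{G}_0(\om)\equiv\mathcal{G}_*(\om)$, and for $p>1$ one gets $\lm_*>\lm_0$, hence $\mathcal{G}_0(\om)\subsetneq\mathcal{G}_*(\om)$. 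I expect the real difficulty to lie in Steps 2--3: making the $C^{2,r}$ bounds uniform up to $\lm_*$ (precisely where one needs $p<p_{_N}$ rather than merely $p<\tfrac{N+2}{N-2}$) and, above all, showing that positivity of the \emph{weighted} first eigenvalue $\sg_1$ — whose weight $\rlq$ may vanish on a large set, so that $\sg_1>0$ does \emph{not} entail a maximum principle — still forces invertibility of the full linearization and hence global uniqueness; the equality analysis of Step 4 is the other delicate point.
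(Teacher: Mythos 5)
Your Steps 1--2 and the continuation/uniqueness part of Step 3 follow essentially the paper's own route: the bound $\sg_1(\all,\pl)\geq \Lambda(\om,2p)-\lm p$ is Proposition \ref{preig}, the uniform $C^{2,r}$ bounds and the lower bound on $\all$ via the $L^1$ mass and the subcritical bootstrap are Lemma \ref{lemE1}, invertibility of the linearization when $\sg_1>0$ and elimination of the scalar variable give Lemmas \ref{lem:iso}--\ref{lem1.1} (your coercivity argument, using $\ino|\nabla\phi|^2\geq(\tl+\sg_1)\ino\rlq[\phi]_{\ssl}^2$ from \eqref{4.1}, is a harmless variant of the paper's Fredholm argument on $\mathcal{H}_1$), and the "continue any stray solution backward to $\lm=0$ and invoke uniqueness plus non-degeneracy there" argument is exactly how the paper proves uniqueness on $[0,\lm_*(\om,p))$. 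The genuine gap is the monotonicity claim at the end of Step 3. The theorem asserts $\frac{d\all}{d\lm}<0$ and $\frac{d\el}{d\lm}>0$ at \emph{every} point of $\mathcal{G}_*(\om)$, and this is precisely the content of Propositions \ref{pr-enrg} and \ref{pr3.2.best}, the technical core of the result. Saying that these "open conditions propagate once one excludes $\frac{d\all}{d\lm}=0$ or $\frac{d\el}{d\lm}=0$ using this identity and $L>0$" is not an argument: strict sign conditions on derivatives do not propagate by openness, the exclusion is the whole point, and it must be carried out for solutions that need not be positive --- exactly the regime where $\rlq$ vanishes on a large set and the linearization acquires the infinite-dimensional kernel $\Eigen(T_{\ssl};0)$, which is why the paper rebuilds the spectral theory before proving these propositions.

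Concretely, the energy half of your claim can be completed cheaply: with $\vl=\frac{d\pl}{d\lm}$, testing $\eqref{1b1}$ against $\vl$ and using \eqref{4.1} gives $p\ino\rlq[\vl]_{\ssl}[\pl]_{\ssl}=\ino|\nabla\vl|^2-\tl\ino\rlq[\vl]_{\ssl}^2\geq 0$, so $\frac{d\el}{d\lm}=\tl\ino\rlq[\vl]_{\ssl}[\pl]_{\ssl}+p\ino\rlq[\pl]_{\ssl}^2>0$, which is \eqref{8.12.11} and Proposition \ref{pr-enrg} in qualitative form. But for $\frac{d\all}{d\lm}=-\Abracket{\wl}_{\ssl}$, with $\wl=\pl+\lm\vl$, nothing of the sort is automatic, since $\Abracket{\vl}_{\ssl}$ has no a priori sign; the paper needs the separate computation of Proposition \ref{pr3.2.best} (test the $\wl$-equation against $\wl$ and the $\xil$-equation against $\wl$, eliminate $\ino\rlq\wl\xil$, and use $\all+\Abracket{\xil}_{\ssl}=1/\ml$, $\xil\geq0$ and $\sg_1>0$ to conclude $\Abracket{\wl}_{\ssl}>0$), and nothing in your proposal supplies an equivalent. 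Your fallback reformulation via $\vla=\frac{\lm}{\all}\pl$ is circular --- it merely restates the desired monotonicity in other variables --- and it presupposes $\all>0$, which is not available on $\mathcal{G}_*(\om)$ beyond $\lm_+(\om,p)$. A secondary weakness: the equality case $\lm_*(\om,p)=\lm_0(\om,p)$ iff $p=1$ in your Step 4 rests on an unproved overdetermined-system rigidity, whereas in the paper the strictness for $p>1$ comes from the strict spectral gap $\sg_1>\nu_1$ of Proposition \ref{preig}(ii) together with compactness; your Step 1 estimate alone only yields $\sg_1\geq p(\lm_0(\om,p)-\lm)$, hence only $\lm_*(\om,p)\geq\lm_0(\om,p)$.
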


\noi Remark that the claim about the $C^1$ regularity of the map $\lm \mapsto (\all,\pl)$ refers to the $\R\times C^{2,r}_{0}(\ov{\om}\,)$ topology as far as
$p\in (1,p_N)$, while if $p=1$ it refers to the $\R\times W^{2,t}_{0}(\om)$ topology, for some $t>N$.\\  

\noi Along the branch $\mathcal{G}_*(\om)$ we have $\sg_1(\all,\pl)>0$ which is why
we say that $\mathcal{G}_*(\om)$ is a branch of \un{stable} solutions. In other words, a solution of $\prl$ is said to be
stable if the first eigenvalue of the linearized
equation is positive.  Remark also that, although we don't know much about the pointwise monotonicity of $\pl$, as far as
$\sg_1(\all,\pl)>0$  we have that
$\frac{d \el}{d\lm}>0$ and $\frac{d \all}{d\lm}<0$, see Propositions \ref{pr-enrg} and \ref{pr3.2.best} below.\\

\noi As a corollary of Theorem \ref{thm1} we come up with a new approach relevant to the understanding of the qualitative behavior of the branch of solutions of
$\pqm$ starting at $(\mu,v)=(0,0)$, also known as the Rabinowitz continuum (\cite{Rabinowitz1971some}), including non minimal solutions. Indeed we reduce the problem of the existence of a \un{global strictly monotone} (in terms of $\el$) parametrization to an estimate about $\lm_*(\om,p)$ and $\lm_+(\om,p)$.

\begin{thm}\label{thm:gel}
Let $N\geq 2$ and let $(\all,\pl)$ be the unique solutions of {\rm $\prl$} for
$$
\lm<\min\{\lm_*(\om,p),\lm_+(\om,p)\}.
$$
Then:
\begin{itemize}
    \item[(i)] The pair~$(\all,\pl)$, as well as~$\el$,  are real analytic in~$\lm$, with
        \begin{align}
            \frac{\dd \all}{\dd\lm}<0, & & \frac{\dd \el}{\dd \lm}>0, & & \forall \lm\in [0 , \; \min\braces{\lm_*(\om,p),\lm_+(\om,p)}  ).
        \end{align}
    \item[(ii)] The function~$\vla=\frac{\lm}{\all}\pl$ solves {\rm $\pqm$} for $\mu=\mul=\lm\all^{p-1}$, and the energy~$\el$ takes the form,
        \begin{align}\label{enrg}\el=\frac12\left(\frac{\all}{\lm}\right)^2\mul\ino (1+\vla)^p\vla.
        \end{align}
        Furthermore,
        \begin{align}
            \mathcal{R}_+=\{(\mul,\vla)\,:\,\lm\in [0,\min\{\lm_*(\om,p),\lm_+(\om,p)\})\}
        \end{align}
        is a real analytic branch of solutions of {\rm $\pqm$}.
    \item[(iii)] Assume that~$\lm_*(\om,p)> \lm_+(\om,p)$, then as $\lm\nearrow \lm_+(\om,p)$ we have,
         \begin{align}\label{iii-thm2}
            \al\searrow 0^+,\qquad \mul\searrow 0^+, \qquad  \|\vla\|_{\ii}\to +\ii,
        \end{align}
        and in particular $\mathcal{R}_+$ is unbounded.
\end{itemize}

\end{thm}

\noi As far as $\lm_*(\om,p)\geq \lm_+(\om,p)$, it is useful to denote
\begin{align}
    \overline{\mathcal{G}_{+}}(\om,p)\coloneqq  \mbox{  the curve of unique solutions of } \prl \mbox{ with } 0\leq \lm\leq \lm_+(\om,p)
\end{align}
and
\begin{align}
    {\mathcal{G}}_{+}(\om,p)\coloneqq  \mbox{ the portion of }  \overline{\mathcal{G}_{+}}(\om,p) \mbox{ with } 0\leq \lm< \lm_+(\om,p).
\end{align}
Due to Theorem \ref{thm1} and since $\lm_+(\om,p)$ is always finite, the proof of Theorem \ref{thm:gel} is deduced just by following
the curve of solutions of $\prl$ as far as either $\lm<\lm_*(\om,p)$ or else all along ${\mathcal{G}}_{+}(\om,p)$ up to $\lm_+(\om,p)$,
which, as far as $\lm_*(\om,p)>\lm_+(\om,p)$,  is “the first value” of $\lm$ where $\all$
approaches $\al=0$ from above, while keeping in particular the monotonicity of $\el$.
\begin{rmk}\label{rem:CS} {\it In particular, since $\el$ is the half of the mean value of $\pl$ with respect to the weight \mbox{\rm $\rl$}, we think that the monotonicity of $\el$ is a {good extension of the classical notion \mbox{\rm (\cite{CrandallRabinowitz1975continuation})} of monotonicity} {of minimal solutions},
which is of particular interest whenever solutions on $\mathcal{G}_*(\om)$ does not anymore correspond via
$\vla=\frac{\lm}{\all}\pl$ to minimal but in fact to \un{non-minimal solutions} of {\rm $\pqm$}.
We also observe that the quantity~$\frac{\vla}{\pl}$ is monotonically strictly increasing along~$\mathcal{G}_*$.
Actually the result shows that, as far as $\lm_*(\om,p)>\lm_+(\om,p)$, the global
parametrization $\mathcal{R}_+$ of solutions of {\rm $\pqm$}
built with the stable solutions of {\rm $\prl$} along ${\mathcal{G}}_{+}(\om,p)$ includes
a {full branch} of {non-minimal solutions} (since of course minimal solutions $v_{\mu}$ vanish {\rm (\cite{CrandallRabinowitz1975continuation})} in the limit $\mu\to 0^+$). However, as far as $\lm_*(\om,p)>\lm_+(\om,p)$, we also have the monotonicity of the solutions all along the branch, including \un{non-minimal solutions}. We are not aware of any criteria of this sort suitable to catch the monotonicity of the solutions behind the minimal branch.\\
Clearly, for those domains for which $\lm_*(\om,p)> \lm_+(\om,p)$, $\mathcal{R}_+$ would be just the Rabinowitz unbounded (positive) continuum {\rm (\cite{Rabinowitz1971some})}  relative to {\rm $\pqm$} and therefore would coincide, for $\om$ convex, with the continuum obtained in Theorem 2.2 in {\rm \cite{Lions1982existence}}. We recall that, as mentioned at the very beginning of the introduction, the monotonicity all along the branch of the
 $L^\ii$ norm of the solutions is well known on balls (see \cite{Korman2012global} and references quoted therein) and on planar domains symmetric and convex w.r.t. coordinate directions (\cite{Holzman1994uniqueness}).\\
The underlying idea of this new method is that a careful analysis of a subcritical, superlinear equation with one constraint, which is {\rm $\prl$}, is effective in describing the continuum of solutions of {\rm $\pqm$} with Morse index not larger than one.
Indeed, the continuum of solutions described in {\rm \cite{Lions1982existence}} contains a full branch of non-minimal solutions, which in turn can be
obtained by a careful use of the mountain pass theorem, see {\rm (\cite{CrandallRabinowitz1975continuation})}.
However, under suitable non-degeneracy assumptions, it is well-known that mountain pass solutions have Morse index not larger than one {\rm (\cite{Hofer1984note})}. This is why we refer to the branch $(\mul,\vla)$ as the
branch of {constrained-stable} solutions of {\rm $\pqm$}.}
\end{rmk}

\bigskip

Therefore, as a consequence of Theorem \ref{thm:gel}, we are naturally interested in finding sufficient conditions which guarantee $\lm_*(\om,p)> \lm_+(\om,p)$, which in turn would yield the existence of a monotone (in terms of $\el$) global parametrization of the Rabinowitz unbounded continuum $\mathcal{R}_+$.
An elementary estimate shows that $\sg_1(\all,\pl)>0$ as far as $\lm\leq \lm_0(\om,p)$ (see Proposition \ref{preig} below) which is unfortunately far from enough for our purposes.\\

\noi{\bf OPEN PROBLEM} Assume $N\geq 2$, $1<p<p_{_N}$ and $\om$ convex, is it true that $\lm_*(\om,p)> \lm_+(\om,p)$?\\

\noi The proof of $\lm_*(\om,p)> \lm_+(\om,p)$ seems to be a hard task in general. Interestingly enough
we succeed in proving the weak inequality for $\Omega=\mathbb{D}_{N}$. Indeed we have,
\begin{thm}\label{thm:lm*} Let~$p\in (1,p_{_N})$, then we have,
$$ \lm_*(\mathbb{D}_N,p) \geq \lm_+(\mathbb{D}_N,p),$$
and \eqref{iii-thm2} holds true, that is, as $\lm\nearrow \lm_+(\mathbb{D}_N,p)$, we have,
\begin{align}
            \al\searrow 0^+,\qquad \mul\searrow 0^+, \qquad  \|\vla\|_{\ii}\to +\ii.
        \end{align}
\end{thm}

\noi The proof is obtained by contradiction via new results of independent interest about the regularity properties of $\all$, to be used together with some
known facts, such as a bending  lemma of Crandall-Rabinowitz type and the spectral properties of variational solutions, see sections \ref{sect:calc in ball} and \ref{sec:lm*} for further details.\\

At this point, assuming that for some domain $\om$ we were able to prove that either
$\lm_*(\om,p)> \lm_+(\om,p)$ or $\lm_*(\om,p)\geq \lm_+(\om,p)$ and \eqref{iii-thm2} holds true, we would also like to describe the monotonicity of $\mu=\mul=\lm\all^{p-1}$ which, together with the monotonicity of $\el$, would yield a complete description of the qualitative behavior of the Rabinowitz continuum $\mathcal{R}_+$ in the plane $(\mu,E)$. However, remark that $\mul=\lm \all^{p-1}$ and that in this case $\all$ would be decreasing along ${\mathcal{G}}_{+}(\om,p)$ (Theorem \ref{thm1}) whence the monotonicity of $\mul$
is not trivial at all. However $\mul\searrow 0^+$ as $\lm\searrow 0^+$ (in which case $\all\nearrow \al_{0}^-=1^-$) and
$\mul\searrow 0^+$ as $\lm\nearrow \lm_+(\om,p)$ (in which case $\all\searrow 0^+$). Of course this is consistent
with the above mentioned classical results in \cite{Lions1982existence} for $\om$ convex.

\noi Interestingly enough, again for $\om=\mathbb{D}_{N}$,  we catch the sharp bending shape of solutions of {\rm $\pqm$}, showing that,
at least in this case, this new method is effective at least as known techniques for radial problems (\cite{Korman2012global}).
\begin{thm}\label{thm4}
Let $(\all,\pl)$ be the unique solutions of {\rm $\prl$} on $\om=\mathbb{D}_{N}$ for~$0<\lm<\lm_+(\mathbb{D}_{N},p)$ and denote~$(\mul,\vla)=(\lm\all^{p-1},\frac{\lm}{\all}\pl)\in \mathcal{R}_+$.

Then~$\vla$ solves {\rm $\pqm$} for~$\mu=\mul$, and~$\mathcal{R}_+$ is a real analytic curve defined in~$\lm\in [0,\lm_+(\mathbb{D}_N,p) )$.

Furthermore, there exists $\lm^{t}\in (0,\lm_+(\mathbb{D}_{N},p))$ such that
\begin{itemize}
    \item $\frac{d\mul}{d\lm}>0$ for $\lm\in[0,\lm^{t})$,
    \item $\frac{d\mul}{d\lm}<0$ for $\lm\in(\lm^{t},\lm_+(\mathbb{D}_{N},p))$,
     \item $\frac{d \el}{d\lm}>0$ for $\lm\in[0,\lm_+(\mathbb{D}_{N},p))$,
     \item $\mul \searrow 0^+$ and $\el \to E_0$ as $\lm\to 0^+$,
     \item $\mul \searrow 0^+$ and
     $\el \to E_\ii(\mathbb{D}_N,p)\in (E_0,+\ii)$ as $\lm\nearrow \lm_+(\mathbb{D}_{N},p)$.
\end{itemize}
\end{thm}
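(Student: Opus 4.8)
The plan is to extract the ``soft'' assertions from the results already established and then to reduce the bell-shape of $\lm\mapsto\mul$ to the classical structure of radial solutions of $\pqm$ on a ball. Since $\om=\mathbb{D}_N$, Theorem \ref{thm:lm*} gives $\min\{\lm_*(\mathbb{D}_N,p),\lm_+(\mathbb{D}_N,p)\}=\lm_+(\mathbb{D}_N,p)$, so Theorem \ref{thm:gel} applies on the whole interval $[0,\lm_+(\mathbb{D}_N,p))$: the pair $(\all,\pl)$ and the energy $\el$ are real analytic in $\lm$ with $\frac{d\all}{d\lm}<0$ and $\frac{d\el}{d\lm}>0$, $\vla=\frac{\lm}{\all}\pl$ solves $\pqm$ with $\mu=\mul=\lm\all^{p-1}$, and $\mathcal{R}_+$ is a real analytic branch of solutions of $\pqm$. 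The endpoint limits are immediate: as $\lm\to0^+$ one has $\all\to\al_0=1$ (Theorem \ref{thm1}) and $\pl\to\psi_0$, the torsion function of $\mathbb{D}_N$, so $\mul\to0^+$ and $\el\to E_0$; as $\lm\nearrow\lm_+(\mathbb{D}_N,p)$, Theorem \ref{thm:gel}(iii) yields $\all\searrow0^+$, $\mul\to0^+$ and $\|\vla\|_\ii\to+\ii$, while, $\el$ being monotone increasing and the curve of unique solutions extending continuously up to $\lm_+(\mathbb{D}_N,p)$ (Theorem C and standard elliptic compactness), $\el\to E_\ii\in(E_0,+\ii)$, the strict inequality $E_\ii>E_0$ coming from $\frac{d\el}{d\lm}>0$. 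This settles every item except the monotonicity of $\mul$.

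For the bell-shape I would set $\gamma_\lm:=\vla(0)=\|\vla\|_\ii$ (these agree because $\vla$ is radially strictly decreasing, \cite{GNN1979symmetry}) and prove that $\lm\mapsto\gamma_\lm$ is a strictly increasing real analytic bijection of $[0,\lm_+(\mathbb{D}_N,p))$ onto $[0,+\ii)$. Real analyticity is from the previous paragraph; injectivity holds because $\vla$ determines $\lm$: the constraint in $\prl$ reads $\all^p\int_{\mathbb{D}_N}(1+\vla)^p=1$, so $\all=(\int_{\mathbb{D}_N}(1+\vla)^p)^{-1/p}$, and, $\mul$ being the unique value of $\mu$ for which $\vla$ solves $\pqm$, one recovers $\lm=\mul\,\all^{1-p}$. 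A continuous injection on an interval is strictly monotone; here it is increasing since $\gamma_0=0$ and $\gamma_\lm>0$ for $\lm>0$, and surjective onto $[0,+\ii)$ since $\gamma_\lm\to+\ii$ as $\lm\nearrow\lm_+(\mathbb{D}_N,p)$. On the other side I would invoke the classical analysis of radial solutions on a ball (\cite{JosephLundgren1972quasilinear,Korman2012global,OuyangShi1999exact} and references therein), valid since $p\in(1,p_{_N})$ is subcritical: the whole solution set of $\bigcup_{\mu>0}\pqm$ is parametrized by $\gamma=v(0)\in(0,+\ii)$ through a real analytic $\hat\mu$ with $\hat\mu(0^+)=\hat\mu(+\ii)=0$ having a single critical point $\gamma^t$, a nondegenerate maximum, and the solution with $v(0)=\gamma$ is a degenerate solution of $\pqm$ precisely when $\gamma=\gamma^t$. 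As $\vla$ is the unique solution of $\pqm$ with $v(0)=\gamma_\lm$, we get $\mul=\hat\mu(\gamma_\lm)$, so, taking $\lm^t\in(0,\lm_+(\mathbb{D}_N,p))$ to be the preimage of $\gamma^t$, the map $\lm\mapsto\mul$ is strictly increasing on $[0,\lm^t)$ and strictly decreasing on $(\lm^t,\lm_+(\mathbb{D}_N,p))$.

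It then remains to promote these monotonicities to the stated strict sign of the derivative. Differentiating $-\Delta\vla=\mul(1+\vla)^p$ in $\lm$ and setting $\phi_\lm:=\frac{d\vla}{d\lm}$ gives $(-\Delta-\mul p(1+\vla)^{p-1})\phi_\lm=\frac{d\mul}{d\lm}(1+\vla)^p$, and $\phi_\lm\not\equiv0$, since otherwise differentiating $\all=(\int_{\mathbb{D}_N}(1+\vla)^p)^{-1/p}$ would force $\frac{d\all}{d\lm}=0$, contradicting $\frac{d\all}{d\lm}<0$. Hence a zero of $\frac{d\mul}{d\lm}$ produces a nontrivial element of the kernel of the linearized Gel'fand operator, i.e. makes $\vla$ a degenerate solution of $\pqm$, forcing $\gamma_\lm=\gamma^t$ and $\lm=\lm^t$; thus $\frac{d\mul}{d\lm}$ vanishes only at $\lm^t$. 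Since at $\lm=0$ one computes $\frac{d\mul}{d\lm}=\al_0^{p-1}=1>0$, combining with the monotonicities just obtained yields $\frac{d\mul}{d\lm}>0$ on $[0,\lm^t)$ and $\frac{d\mul}{d\lm}<0$ on $(\lm^t,\lm_+(\mathbb{D}_N,p))$.

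The hard part is the radial input used in the second step: the one-turn property of $\hat\mu$ together with the nondegeneracy of its fold are exactly the delicate facts — a Sturm-type oscillation analysis of the linearized radial equation, valid in the subcritical regime — on which the whole scheme leans, and the real work is to import them cleanly and match them to the branch $\mathcal{R}_+$. The auxiliary points — injectivity and surjectivity of $\lm\mapsto\gamma_\lm$, and the limit $\gamma_\lm\to+\ii$ at $\lm_+(\mathbb{D}_N,p)$, itself resting on Theorems \ref{thm:lm*} and \ref{thm:gel}(iii) — are routine but must be secured for that matching to work.
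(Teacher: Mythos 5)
Your proposal is essentially correct, but it follows a genuinely different route from the paper. You reduce the bell-shape of $\lm\mapsto\mul$ to the classical exact-multiplicity theory for radial Gelfand problems on balls: you match the branch $\mathcal{R}_+$ to the classical curve via the parametrization $\gamma=v(0)$, show $\lm\mapsto\gamma_\lm$ is a strictly increasing bijection onto $[0,+\ii)$ (the injectivity argument via $\all=(\int_{\mathbb{D}_N}(1+\vla)^p)^{-1/p}$ and $\lm=\mul\all^{1-p}$ is fine, granted the shooting fact that $v(0)$ determines the pair $(\mu,v)$), and then import from \cite{JosephLundgren1972quasilinear,Korman2012global,OuyangShi1999exact} that $\hat\mu$ has a single turn and that degenerate solutions occur only there, which upgrades monotonicity to the strict sign of $\frac{d\mul}{d\lm}$ via the kernel element $\frac{d\vla}{d\lm}$. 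The paper does none of this: it exploits the explicit Lane--Emden parametrization of Section \ref{sect:calc in ball}, namely $\mul=\gall^{p-1}$ and $R_N^{\frac{2}{p-1}}\gall=r_{_\lm}^{\frac{2}{p-1}}u_0(r_{_\lm})$ with $r_{_\lm}$ strictly increasing, so that the sign of $\frac{d\mul}{d\lm}$ is governed by $g(\lm)=\frac{2}{p-1}u_0(r_{_\lm})+r_{_\lm}u_0'(r_{_\lm})$, which is shown to be strictly decreasing from positive to negative by a one-line computation with the radial ODE for $u_0$; this is elementary and self-contained. The difference matters beyond taste: the paper explicitly presents Theorem \ref{thm4} as evidence that the new constrained-problem method reproduces the known radial results independently, whereas your argument leans on precisely those results (the one-turn property and the location of the degenerate solution are the deep Sturm-type input, as you yourself note), so it establishes the statement but buys no independent confirmation; on the other hand it is shorter if one accepts the classical input, and it bypasses the $C^2$ regularity analysis of $\mathscr{I}$ and $\gamma(R)$. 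The soft parts of your proof (analyticity, energy monotonicity, endpoint limits via Theorems \ref{thm:lm*}, \ref{thm:gel} and Theorem A) coincide with the paper's.
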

\noindent
Actually, due to a sharp estimate in \cite{BJ2022new},
 limited to the case $N=2$ we have that $E_{\infty}= \frac{p+1}{16\pi}$. It is also worth to recall that we have the explicit expression of $\lm_+(\mathbb{D}_{N},p)$ in terms of Sobolev constants, see Theorem B and \eqref{lmpN} above.
The proof is based on the fact that $\mul^{\frac{1}{p-1}}$ admits a natural
parametrization in terms of the unique solution of \eqref{Emden0}. On one side this fact, as well as the proof of Theorem \ref{thm:lm*}, heavily rely on the radial symmetry of the
solutions, which, as mentioned at the very beginning of this introduction, unfortunately is exactly what we wanted to circumvent.
 On the other side, once we get to this point, compared to known results the proof of the monotonicity of $\mul$
is surprisingly simple and natural. Moreover, some evaluations based on the Pohozaev identity seem to suggest an alternative proof of Theorem \ref{thm4},
 which could be hopefully flexible enough to attack the monotonicity of $\mul$ at least on convex and symmetric domains in dimension $N=2$.\\

The qualitative behavior of the corresponding branch of solutions in the $(\mu,E)$ plane is depicted in Figure~\ref{fig1}.
As remarked above, modulo the replacement of $v_\mu(0)$ with the energy,
Theorem \ref{thm4} is saying essentially the same as that the above mentioned results in \cite{Korman2012global} say about
the Rabinowitz continuum of $\pqm$ on $\mathbb{D}_{N}$. Remark that the curve bends back to a finite limit ($E_\infty$) on the energy axis, which does not correspond to a non trivial solution of $\pqm$, but rather to the fate of the indeterminate form as
$\all\to 0^+$ in \eqref{enrg}. A natural question is whether or not $\mul$ is a strictly concave function, a problem which will be addressed elsewhere.\\

\begin{figure}[h]
\psfrag{L}{$\mu^t$}
\psfrag{V}{$\mu$}
\psfrag{U}{$E$}
\psfrag{M}{$E_\infty$}
\psfrag{Z}{$(0,E_0)$}
\psfrag{E}{$E_{\lm^t}$}
\psfrag{K}{$(0,0)$}
\psfrag{A}{$(\mu_\lm,\el)$}
\includegraphics[totalheight=2in, width=3in]{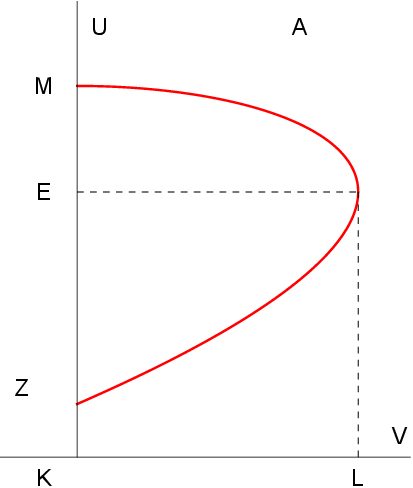}
\caption{The graph of $(\mul,\el),\,\lm\in [0,\lm_+(\mathbb{D}_{_N},p)]$. Here $\mu^t=\mu_\lm\left.\right|_{\lm=\lm^t}$.}\label{fig1}
\end{figure}


Many rather natural related problems seem to be interesting, such as the generalization of these results to more general operators
and nonlinearities, including in particular the full subcritical range of growths $p_{_N}\leq p< \frac{N+2}{N-2}$.
Concerning this point we do not expect an easy adaptation, since the assumption $p<p_{_N}$ is readily seen to play a crucial role, at least as far as we are concerned with the proof pursued here.

\bigskip
\bigskip

\section{Preliminary estimates for constrained solutions}
Since $|\om|=1$, by the constraint in $\prl$ we have that $\all\leq 1$ for any non-negative solution. Here we have,
\begin{lemma}\label{lemE1}
    Let $p\in[1,p_{_N})$. For any $\ov{\lm}>0$ there exist $\ov{\al}=\ov{\al}(r,\om,\ov{\lm},p,N)>-\ii$ and $C_1=C_1(r,\om,\ov{\lm},p,N)<+\infty$ such that
    \begin{align}
        \all\geq \ov{\al}, & & \|\pl\|_{C^{2,r}_0(\ov{\om})}\leq C_1
    \end{align}
    for any solution~$(\all,\pl)$ of {\rm $\prl$} with $\lm\in [0,\ov{\lm}\,]$.
\end{lemma}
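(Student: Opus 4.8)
The plan is a three-step argument whose only substantial ingredient is a subcritical bootstrap, and it is exactly there that the hypothesis $p<p_{_N}$ enters. Throughout I fix $\ov{\lm}>0$, take any solution $(\all,\pl)$ of $\prl$ with $\lm\in[0,\ov{\lm}\,]$, and keep the notation $\rl=[\all+\lm\pl]_+^p\ge 0$, so that $-\lapl\pl=\rl$ in $\om$, $\pl=0$ on $\pa\om$ and $\ino\rl=1$. First I would record the two elementary facts that $\pl\ge 0$ in $\om$ (by the maximum principle, since $-\lapl\pl=\rl\ge 0$ and $\pl=0$ on $\pa\om$) and that, as a consequence, $\all\le 1$ — indeed $\all>1$ would force $\all+\lm\pl\ge\all>1$ pointwise, hence $\ino\rl\ge\all^p|\om|>1$, against the constraint — the latter being moreover already recorded right before the statement. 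These two facts alone produce the pointwise inequality $0\le\rl\le(1+\ov{\lm}\,\pl)^p\le C(p,\ov{\lm})\,(1+\pl^p)$, which is the only input the iteration needs; I emphasize that the lower bound on $\all$ is \emph{not} used at this stage.

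The second step would be the a priori $C^{2,r}_0(\ov{\om})$ bound for $\pl$, and here the normalization $\|\rl\|_{L^1(\om)}=1$ plays the role that an a priori blow-up analysis would play for the unconstrained Gel'fand problem: it supplies the seed integrability, from which a Brezis--Kato type iteration closes precisely because $p<p_{_N}$. From $-\lapl\pl=\rl$ with $\|\rl\|_{L^1}=1$ one gets, by Stampacchia duality together with Sobolev embedding (or directly from the Green representation of $\pl$), $\|\pl\|_{L^s(\om)}\le C(s,\om)$ for every $s<p_{_N}$ when $N\ge 3$ and for every $s<\ii$ when $N=2$. Inserting this into $\rl\le C(1+\pl^p)$ and choosing $s\in(p,p_{_N})$ — a range that is nonempty exactly because $p<p_{_N}$ — yields $\rl\in L^{s_1}(\om)$ with $s_1=s/p>1$. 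Then I would iterate $\pl\in L^{s_k}\Rightarrow\rl\in L^{s_k/p}\Rightarrow\pl\in W^{2,s_k/p}(\om)\hookrightarrow L^{s_{k+1}}$, $\tfrac1{s_{k+1}}=\tfrac p{s_k}-\tfrac2N$; the exponents $s_k$ improve and reach the regime $s_k/p>\tfrac N2$ in finitely many steps (again $p<p_{_N}$ is what makes this go through, being equivalent to $\tfrac1p<\tfrac2{N(p-1)}$), whence $\pl\in W^{2,s}(\om)$ for every finite $s$ and $\pl\in C^{1,\gamma}(\ov{\om})$, with bounds depending only on $\om,\ov{\lm},p,N$. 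Finally, since $\all\le 1$, $\lm\le\ov{\lm}$, and $t\mapsto[t]_+^p$ is locally Lipschitz (as $p\ge 1$), the quantity $\rl=[\all+\lm\pl]_+^p$ is bounded in $C^{0,r}(\ov{\om})$ uniformly, and Schauder estimates then give $\|\pl\|_{C^{2,r}_0(\ov{\om})}\le C_1$; here too only the upper bound $\all\le 1$ is needed, since $[t]_+^p$ and its local Lipschitz constant are insensitive to how negative $t$ is.

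The third step, the lower bound on $\all$, would then be immediate: from $\pl\ge 0$ and $\lm\le\ov{\lm}$ one has $\rl\le(\all+\ov{\lm}\,\|\pl\|_{L^\ii(\om)})_+^p$ pointwise, hence, using $|\om|=1$, $1=\ino\rl\le(\all+\ov{\lm}\,\|\pl\|_{L^\ii(\om)})_+^p$, so $\all+\ov{\lm}\,\|\pl\|_{L^\ii(\om)}\ge 1$ and therefore $\all\ge 1-\ov{\lm}\,C_1=:\ov{\al}>-\ii$, uniformly over the solutions considered. The main obstacle is entirely in the second step: $p<p_{_N}$ is needed both to start the iteration (to have $s/p>1$ for some admissible $s$) and to keep the Sobolev exponents improving, and the scheme degenerates at $p=p_{_N}$, consistently with the remark in the introduction that $p<p_{_N}$ is crucial; everything else — the sign of $\pl$, the use of the constraint, the local Lipschitz character of $[\,\cdot\,]_+^p$ — is soft.
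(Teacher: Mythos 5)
Your proof is correct and follows essentially the same route as the paper: both start from the $L^1$ normalization of the right-hand side to obtain $L^s$ bounds for $s<p_{_N}$, exploit $p<p_{_N}$ to run an elliptic bootstrap up to a uniform $C^{2,r}_0(\ov{\om})$ bound for $\lm\in[0,\ov{\lm}]$, and then read the lower bound on $\all$ off the mass constraint. The only cosmetic differences are that you use the unified pointwise bound $[\all+\lm\pl]_+\le 1+\ov{\lm}\,\pl$ (valid since $\all\le 1$) where the paper splits into $\all\ge 0$ (handled by citing Lemma 4.2 of \cite{BJ2022uniqueness}) and $\all<0$ (where $[\all+\lm\pl]_+\le\lm\pl$), and that you get the explicit bound $\all\ge 1-\ov{\lm}C_1$ instead of the paper's compactness contradiction.
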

This extends the estimates for positive solutions in~\cite[Lemma 4.2]{BJ2022uniqueness} and will be crucial for later bifurcation analysis.

\begin{proof}[Proof of Lemma~\ref{lemE1}]
    We can assume w.l.o.g. $\all<0$, otherwise $0\leq \all\leq 1$ and the proof for \cite[Lemma 4.2]{BJ2022uniqueness} works fine.
    Because of the constraint~$\ino [\all+\lm\pl]_+^p=1$, it is well-known (\cite{Littleman1963regular}) that for any $t\in [1,\frac{N}{N-1})$ there exists $C=C(t,N,\om)$ such that $\|\pl\|_{W_0^{1,t}(\om)}\leq C(t,N,\om)$ for any solution of $\prl$.
    Thus, by the Sobolev inequalities, for any $1\leq s< p_N=\frac{N}{N-2}$ we have $\|\pl\|_{L^s(\om)}\leq C(s,N,\om)$, for some $C(s,N,\om)$.
    As a consequence, since $p<p_{N}$, we have
    \begin{align}
        \ino [\all+\lm\pl]_+^{pm}\leq \lm^{pm}\ino \pl^{pm}\leq \ov{\lm}^{pm} C(m,N,\om)
    \end{align}
    for any $1<m<\frac{p_N}{p}$, and $\lm\leq \ov{\lm}$.
    By standard elliptic estimates, $\|\pl\|_{W_0^{2,m}(\om)}\leq C_0(p,N,\ov{\lm},\om)$, for any $\lm\leq \ov{\lm}$.
    Thus by the Sobolev embedding we find $\|\pl\|_{W_0^{1,t}(\om)}\leq C(t,N,\om)$ for any $t\in [1,\frac{Nm}{N-m}]$.

    At this point, since~$\frac{Nm}{N-m}>\frac{N}{N-1}$ and~$\om$ is of class $C^{2,r}$, by a bootstrap argument we find that
    \begin{align}
     \|\pl\|_{C^{2,r}_0(\ov{\om})}\leq C_1(r,\om,\ov{\lm},p,N)
    \end{align}
    for any $\lm\leq \ov{\lm}$.
    As a consequence, if $\all$ were not uniformly bounded from below, we would have $\al_n\to -\ii$ along some sequence $\lm_n\leq \ov{\lm}$, which would be a contradiction to $\ino [\al_n+\lm_n\psi_n]_+^p=1$.
\end{proof}

\


\section{Spectral setup of the linearized problem}\label{sec3}

For fixed $\lm\geq 0$ and $p\in[1,p_{_N})$, let $(\all,\pl)$ be a solution of $\prl$,  we will adopt the following notation,
$$
\Abracket{\phi}_{\ssl}=\dfrac{\ino \rlq  \phi}{\ino \rlq}\quad \mbox{and }\quad
[\phi]_{\ssl}=\phi \,-\Abracket{\phi}_{\ssl},\quad \phi \in L^2(\om).
$$
Let us recall that we are always concerned with classical solutions, whence in particular
\begin{align}
    \rlq=[\all+\lm\pl]_+^{p-1}\in C^0(\om).
\end{align}
The support of $\rl$ is by definition $\ov{\om_+}$,
whence, if~$\all\geq 0$ then~$\om_+= \om$ while if~$\all<0$ then, since $\pl$ is continuous, we have $\om_+\Subset \om$.\\

Consider the linear operator~$L_{\ssl}\colon C^{2,r}_0(\ov{\Omega})\to C^r(\ov{\Omega})$ defined by
\begin{align}\label{eLl}
    L_{\ssl}[\phi]=-\Delta \phi-\tl \rlq [\phi]_{\ssl}
\end{align}
with~$\tl=\lm p$.
We say that $\sg=\sg(\all,\pl)\in\R$ is an eigenvalue of $L_{\ssl}$ if the equation
\begin{align}\label{lineq0.1}
-\Delta \phi-\tl \rlq [\phi]_{\ssl}=\sg\rlq [\phi]_{\ssl},
\end{align}
admits a non-trivial weak solution $\phi\in H^1_0(\om)$, and the corresponding eigenspace is denoted by $\Eigen(L_{\ssl};\sg)$.
Note that~$L_{\ssl}$ involves not only nonlocal terms but also a weight which may vanish on a large set.
This is in great contrast to~\cite{BJ2022uniqueness}; although the structure of the spectral theory goes along the same ideas
in~\cite{BJ2022uniqueness}, a relevant
part of the argument differs essentially.
Now it is crucial to understand the spectral properties of $L_{\ssl}$.
Although the main properties for general weighted operators are considered in~\cite{ManesMicheletti1973estensione},
here we need a refined information about the spectrum and, in particular, about the eigenfunctions.

The eigenvalue equation~\eqref{lineq0.1} can be written as follows,
\begin{align}\label{eq:weighted eigenvalue}
    -\Delta \phi = (\tl+\sigma)\rlq [\phi]_{\ssl}.
\end{align}
If we let~$G$ denote the Green's function for~$-\Delta$ with Dirichlet boundary condition and apply it to~\eqref{lineq0.1}, then we see that,
\begin{align}
    \phi =  \frac{\tl+\sg}{\tl}\;  G* \parenthesis{\tl\rlq[\phi]_{\ssl}}.
\end{align}
As in~\cite{BJ2022uniqueness} we consider the operator~$T_{\ssl} \colon H^1_0(\Omega)\to H^1_0(\Omega)$ defined as follows,
\begin{align}
    T_{\ssl}(\phi) \coloneqq G* \parenthesis{\tl\rlq[\phi]_{\ssl}},
\end{align}
so that~\eqref{lineq0.1} is equivalent to
\begin{align}\label{eq:eigenvalue for T}
    T_{\ssl}(\phi)= \mu\phi, \qquad \mbox{ with } \mu = \frac{\tl}{\tl+\sg}.
\end{align}
In other words,~$\phi\in \Eigen(L_{\ssl};\sigma)$ iff~$\phi\in \Eigen(T_{\ssl};\frac{\tl}{\tl+\sg})$.
Thus it suffices to understand the eigenvalues and eigenfunctions of~$T_{\ssl}$, where the advantage is that it is a linear self-adjoint compact operator on the Hilbert space~$H^1_0(\Omega)$ equipped with the inner product
\begin{align}
    \Abracket{\xi,\eta}_{H^1_0}=\int_{\Omega} \Abracket{\nabla\xi,\; \nabla\eta}, \qquad  \forall \xi,\eta\in H^1_0(\Omega).
\end{align}
Indeed,
\begin{align}
    \Abracket{T_{\ssl}(\xi),\; \eta}_{H^1_0}
    =& \int_{\Omega} \Abracket{\nabla T_{\ssl}(\xi), \nabla\eta}
    =\int_{\Omega} (-\Delta T_{\ssl}(\xi))\eta\\
    =&\int_{\Omega} \tl\rlq [\xi]_{\ssl} \eta
    =\int_{\Omega} \tl\rlq [\xi]_{\ssl} [\eta]_{\ssl}
\end{align}
which is symmetric in~$\xi$ and~$\eta$, hence~$T_{\ssl}$ is self-adjoint.
The compactness of~$T_{\ssl}$ follows immediately from the compactness of~$G$.
As a consequence, the spectrum of~$T_{\ssl}$, say~$\Spect(T_{\ssl})$, consists of countably many real nonzero eigenvalues and perhaps zero.
Each nonzero eigenvalue has finite multiplicity, and the nonzero eigenvalues have zero as the unique accumulation point.
From~\eqref{eq:weighted eigenvalue} it follows that~$\tl+\sigma\geq 0$, hence~$\Spect(T_{\ssl})\subset\R_+$.
We may thus list the nonzero eigenvalues as follows,
\begin{align}
    \mu_1\geq \mu_2\geq \mu_3\geq \cdots >0, \quad \lim_{j\to +\infty}\mu_j = 0,
\end{align}
and denote the corresponding eigenfunctions by~$\phi_j$,~$j\in\mathbb{N}$, which satisfy
\begin{align}
    \delta_{jk}=\Abracket{\phi_j,\phi_k}_{H^1_0}=\int_\Omega \Abracket{\nabla \phi_j, \;\nabla\phi_k}, \qquad \forall j,k\geq 1.
\end{align}
Introducing the bilinear form,
\begin{align}
    \mathcal{B}(\xi,\eta)\coloneqq \tl\int_{\om} \rlq [\xi]_{\ssl}[\eta]_{\ssl}, \qquad \forall \xi,\eta\in H^1_0(\Omega),
\end{align}
then it is well known (see \cite{ManesMicheletti1973estensione}) that the eigenvalues can be characterized as follows :
\begin{itemize}
    \item for~$j=1$:
            \begin{align}
                \mu_1= &\max \braces{ \mathcal{B}(\phi,\phi) \mid \phi\in H^1_0(\Omega),\; \Abracket{\phi,\phi}_{H^1_0}=1}  \\
                =&\; \mathcal{B}(\phi_1,\phi_1),
            \end{align}
    \item for~$j\geq 2$:
            \begin{align}
                \mu_j=& \max\braces{\mathcal{B}(\phi,\phi)\mid \phi\in H^1_0(\Omega),\; \Abracket{\phi,\phi}_{H^1_0}=1, \; \Abracket{\phi,\phi_k}_{H^1_0} =0, \mbox{ for } 1\leq k\leq j-1} \\
                =& \;\mathcal{B}(\phi_j,\phi_j).
            \end{align}
\end{itemize}
We collect the set of nonzero eigenvalues in~$\Spect(T_{\ssl})\setminus \braces{0}$ and denote
\begin{align}
    \mathcal{H}_1\coloneqq \ov{\oplus_{j\geq 1} \Eigen(T_{\ssl};\mu_j) } = \ov{\Span\braces{\phi_j\mid j\geq 1}}
\end{align}
where the closure is taken in the~$H^1_0$ norm.
This is a closed subspace of~$H^1_0(\Omega)$.

\begin{lemma}\label{lem:spectral}$\,$
    \begin{itemize}
        \item[(i)] $0\in\Spect(T_{\ssl})$ iff~$\alpha<0$;
        \item[(ii)] If~$\alpha\geq 0$, then
                    \begin{align}
                        \Spect(T_{\ssl})=\braces{\mu_j\mid j\geq 1}\quad \mbox{ and }\quad
                        H^1_0(\Omega)=\mathcal{H}_1;
                    \end{align}
        \item[(iii)] If~$\alpha<0$, then
                \begin{align}
                    \Spect(T_{\ssl})=\braces{0}\cup \braces{\mu_j\mid j\geq 1}\quad  \mbox{ and }\quad
                    H^1_0(\Omega)= \Eigen(T_{\ssl};0)\oplus\mathcal{H}_1.
                \end{align}
                Moreover,
                \begin{align}\label{eq:og decomp}
                    \Eigen(T_{\ssl};0)=\braces{\phi\in H^1_0(\Omega)\mid \phi|_{\Omega_+} = \mbox{const} }
                \end{align}
                and the restriction of  any function in~$\mathcal{H}_1$ to the set~$\Omega\setminus\ov{\Omega_+}$ is harmonic.
    \end{itemize}
\end{lemma}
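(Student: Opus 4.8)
The plan is to analyze the kernel of $T_{\ssl}$ directly and then use the spectral decomposition of a compact self-adjoint operator. First I would establish (i) and the description \eqref{eq:og decomp} of $\Eigen(T_{\ssl};0)$ simultaneously. Observe that $T_{\ssl}(\phi)=0$ in $H^1_0(\om)$ means $G*(\tl\rlq[\phi]_{\ssl})=0$, i.e. $-\Delta(G*(\tl\rlq[\phi]_{\ssl}))=\tl\rlq[\phi]_{\ssl}=0$ as an element of $H^{-1}$; since $\rlq=[\all+\lm\pl]_+^{p-1}$ is continuous and strictly positive precisely on $\om_+$, this is equivalent to $[\phi]_{\ssl}\equiv 0$ on $\om_+$, that is, $\phi|_{\om_+}=\Abracket{\phi}_{\ssl}=\mathrm{const}$. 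If $\all\geq 0$ then $\om_+=\om$, so $\phi$ is constant on all of $\om$, hence $\phi\equiv 0$ in $H^1_0(\om)$, giving $0\notin\Spect(T_{\ssl})$; if $\all<0$ then $\om_+\Subset\om$ (as noted in the text, using continuity of $\pl$), and one can clearly produce nonzero $\phi\in H^1_0(\om)$ constant on $\om_+$, so $0\in\Spect(T_{\ssl})$ and \eqref{eq:og decomp} holds. This proves (i).

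Next, for the decompositions in (ii) and (iii): since $T_{\ssl}$ is a compact self-adjoint operator on the Hilbert space $H^1_0(\om)$ (established just above in the excerpt), the spectral theorem gives the orthogonal decomposition $H^1_0(\om)=\Ker(T_{\ssl})\oplus\ov{\oplus_{j\geq 1}\Eigen(T_{\ssl};\mu_j)}=\Ker(T_{\ssl})\oplus\mathcal{H}_1$, and $\Spect(T_{\ssl})\setminus\{0\}=\{\mu_j\mid j\geq 1\}$ with the listed ordering and accumulation at $0$. Combining with (i): when $\all\geq 0$ we have $\Ker(T_{\ssl})=\{0\}$, so $H^1_0(\om)=\mathcal{H}_1$ and $\Spect(T_{\ssl})=\{\mu_j\mid j\geq 1\}$ (note $0$ is still an accumulation point but not an eigenvalue, consistent with the statement); when $\all<0$ we have $\Ker(T_{\ssl})=\Eigen(T_{\ssl};0)\neq\{0\}$, giving the stated decomposition and $\Spect(T_{\ssl})=\{0\}\cup\{\mu_j\mid j\geq 1\}$.

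Finally, for the harmonicity claim in (iii): it suffices to show each eigenfunction $\phi_j$ (with eigenvalue $\mu_j\neq 0$) is harmonic on $\om\diff\ov{\om_+}$, since harmonicity is preserved under $H^1_0$-limits of such functions (being in the kernel of $\Delta$ weakly on the open set $\om\diff\ov{\om_+}$ is a closed condition in $H^1$). But $\phi_j=\mu_j^{-1}T_{\ssl}(\phi_j)=\mu_j^{-1}G*(\tl\rlq[\phi_j]_{\ssl})$, and $\rlq$ vanishes identically on $\om\diff\ov{\om_+}$, so $-\Delta\phi_j=\mu_j^{-1}\tl\rlq[\phi_j]_{\ssl}=0$ there, as desired. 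I expect the only genuinely delicate point to be the equivalence ``$\tl\rlq[\phi]_{\ssl}=0$ in $H^{-1}(\om)$ $\iff$ $[\phi]_{\ssl}=0$ on $\om_+$''; this requires knowing that $\rlq>0$ on the open set $\om_+$ and that multiplication by this continuous weight, together with the fact that $\om\diff\ov{\om_+}$ contributes nothing, lets one localize — all of which follows from the continuity of $\pl$ and the definition $\om_+=\{\al+\lm\pl>0\}$, so no serious obstacle remains once this observation is in place.
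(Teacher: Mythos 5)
Your proposal is correct and follows essentially the same route as the paper: identify $\Ker(T_{\ssl})$ via the vanishing of $\rlq[\phi]_{\ssl}$ (so that kernel elements are exactly the $H^1_0$ functions constant on $\Omega_+$, nontrivial precisely when $\all<0$), invoke the spectral theorem for the compact self-adjoint operator $T_{\ssl}$ for the orthogonal decomposition, and read off harmonicity of $\mathcal{H}_1$-functions on $\Omega\setminus\ov{\Omega_+}$ from the eigenvalue equation since $\rlq$ vanishes there. Your only additions are cosmetic — deriving the kernel characterization by injectivity of the Green operator rather than by testing against $\phi$, and spelling out that weak harmonicity passes to $H^1$-limits when extending from the eigenfunctions $\phi_j$ to all of $\mathcal{H}_1$, a point the paper leaves implicit.
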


    \begin{proof}
        (i) Taking the Laplacian of~$T_{\ssl}(\phi)= \mu\phi$ gives
        \begin{align}
            -\mu\Delta\phi= \tl\rlq[\phi]_{\ssl}
        \end{align}
        and testing against~$\phi$ we have that
        \begin{align}
            \mu\int_{\Omega} |\nabla\phi|^2 = \tl\int_{\Omega} \rlq [\phi]_{\ssl}^2 .
        \end{align}
        If~$\alpha\geq 0$, then~$\Omega_+= \Omega$ by the maximum principle and
        if~$\mu=0$ were an eigenvalue with an eigenfunction~$\phi\neq 0$, then~$[\phi]_{\ssl}\equiv 0$, whence~$\phi=\Abracket{\phi}_{\ssl}$ would be constant, which should be necessarily zero since~$\phi\in H^1_0(\Omega)$. This is the desired contradiction, showing the first implication.

        Conversely, if~$\alpha<0$ then~$\Omega\setminus \ov{\Omega_+}$ is a nonempty open subset and any~$0\neq \phi \in H^1_0(\Omega\setminus \ov{\Omega_+})$ is an eigenfunction of~$T_{\ssl}$.

        (ii) If~$\alpha\geq 0$, then we see that~$\Spect(T_{\ssl})=\braces{\mu_j\mid j\geq 1}$ doesn't contain~$0$, and the normalized eigenfunctions constitutes a complete orthonormal basis for~$H^1_0(\Omega)$ by standard results.

        (iii) If~$\alpha<0$, then~$\Spect{T_{\ssl}}=\braces{0}\cup\braces{\mu_j\mid j\geq 1}$, and~$H^1_0(\Omega)=\Eigen(T_{\ssl};0)\oplus \mathcal{H}_1$.
        Since we have seen that~$H^1_0(\Omega\setminus\ov{\Omega_+})\subset\Eigen(T_{\ssl};0)$, the eigenvalue~$0$ has infinite multiplicity.

        Let~$\phi_0\in \Eigen(T_{\ssl};0)$, namely,
        \begin{align}
            0=T_{\ssl}(\phi_0)=G*\parenthesis{\tl\rlq [\phi_0]_{\ssl}},
        \end{align}
        hence~$\rlq[\phi_0]_{\ssl}\equiv 0$ a.e. in~$\Omega$.
        It follows that~$[\phi_0]_{\ssl}\equiv 0$ a.e. in~$\Omega_+$ or equivalently,~$\phi_0 = \Abracket{\phi_0}_{\ssl}$ is constant a.e. in~$\Omega_+$.
        Again, since~$H^1_0(\Omega\setminus\ov{\Omega_+})\subset\Eigen(T_{\ssl};0)$,~$\phi_0$ can be arbitrary in~$\Omega\setminus\ov{\Omega_+}$ as long as it is~$H^1$ and has trace~$0$ on~$\partial\Omega$.

        For~$\mu_j>0$, the eigenfunction~$\phi_j$ satisfies
        \begin{align}
            -\Delta\phi_j =\frac{1}{\mu_j} \tl\rlq [\phi_j]_{\ssl}
        \end{align}
        and the right hand side vanishes in~$\Omega\setminus\Omega_+$, namely~$\phi_j|_{\Omega\setminus\ov{\Omega_+}}$ is necessarily harmonic.
    \end{proof}

\begin{rmk}\label{rem:orth}
    In the orthogonal decomposition, we will use
    \begin{align}
        P_0\colon H^1_0(\Omega)\to\Eigen(T_{\ssl};0) \quad  \mbox{ and }\quad
        P_1\colon H^1_0(\Omega)\to \mathcal{H}_1
    \end{align}
    to denote the corresponding orthogonal projections.
    More precisely, for any~$\psi\in H^1_0(\Omega)$, we have that
    \begin{align}
        \psi= P_0 \psi + P_1 \psi = P_0\psi + \sum_{j=1}^{+\infty} \beta_j\phi_j
    \end{align}
    with the Fourier coefficients {\rm
    \begin{align}
        \beta_j
        = \Abracket{\psi,\phi_j}_{H^1_0}
        =\ino \Abracket{\nabla\psi,\;\nabla\phi_j}
        =\ino \frac{\tl}{\mu_j} \rlq [\phi_j]_{\ssl} \psi
        =\ino \frac{\tl}{\mu_j} \rlq [\phi_j]_{\ssl} [\psi]_{\ssl}.
    \end{align}}

\end{rmk}

Recall that each~$\mu_j>0$ of~$T_{\ssl}$ corresponds to an eigenvalue~$\sg_j$ of~$L_{\ssl}$, related by
\begin{align}
    \mu_j=\frac{\tl}{\tl+\sg_j}, \quad \mbox{ that is, }\quad  \sg_j= \tl\parenthesis{\frac{1}{\mu_j}-1},
\end{align}
sharing the same eigenfunction~$\phi_j$.
The zero eigenvalue of~$T_{\ssl}$ does not correspond to any eigenvalue of~$L_{\ssl}$.
Indeed,~$L_{\ssl}|_{\Eigen(T_{\ssl};0)}= (-\Delta)|_{\Eigen(T_{\ssl};0)}$, which gives the other part of the spectrum of~$L_{\ssl}$.

Note that~$0\in\Spect(L_{\ssl})$ iff~$1\in\Spect(T_{\ssl})$.

\begin{lemma}\label{lem:iso}
If $0\notin \Spect(L_{\ssl}) $, then for $p>1$, $L_{\ssl}\colon C^{2,r}_0(\ov{\Omega})\to C^r(\ov{\Omega})$ is an isomorphism, while for $p=1$, 
$L_{\ssl}\colon W^{2,t}_0(\Omega)\to L^t(\Omega)$ is an isomorphism for some $t>N$.
\end{lemma}
\begin{proof}
    The assumption~$0\notin \Spect(L_{\ssl})$ guarantees that~$L_{\ssl}$ is injective. We discuss only the case $p>1$ just to avoid technicalities.
    For the surjectivity, let~$f\in C^\beta(\ov{\Omega})$, and we need to find a solution of
    \begin{align}
        L_{\ssl}\varphi= -\Delta \varphi-\tl\rlq [\varphi]_{\ssl} =f.
    \end{align}
    Applying~$G$ to both sides, this is equivalent to
    \begin{align}
        \varphi-T_{\ssl}(\varphi) = G*f.
    \end{align}
    By projecting to the subspaces~$\Eigen(T_{\ssl};0)$ and~$\mathcal{H}_1$ respectively, say~$\varphi=\varphi_0+\varphi_1$, and~$G*f=(G*f)_0+ (G*f)_1$, with~$\varphi_0, (G*f)_0 \in \Eigen(T_{\ssl};0)$ and~$\varphi_1, (G*f)_1\in \mathcal{H}_1$, and noting that~$T_{\ssl}(\varphi_0)=0$, we get
    \begin{align}\label{eq:surjectivity of L}
        \varphi_0 =(G*f)_0, & &
        (I-T)\varphi_1= (G*f)_1.
    \end{align}
    Since~$1\notin\Spect(T_{\ssl})$, by the Fredholm alternative on the  Hilbert subspace~$\mathcal{H}_1$, we see that there exists a unique~$\varphi_1$ solving the second equation in~\eqref{eq:surjectivity of L}.
\end{proof}

\

To describe the continuous branch of solutions, we employ the implicit function theorem.
Thus consider the map
\begin{align}
    F\colon (-1,+\infty)\times \R \times C^{2,r}_0(\ov{\Omega}) &\to \R \times C^r(\ov{\Omega}), \\
            (\lambda,\alpha, \psi)& \mapsto F(\lambda,\alpha, \psi )=(F^1, F^2),
\end{align}
with
\begin{align}\label{eF}
    F^1(\lambda,\alpha,\psi)=  -1+\int_\Omega [\alpha+\lambda\psi]_+^p, & &
    F^2(\lambda,\alpha,\psi)= -\Delta\psi-[\alpha+\lambda\psi]_+^p  \in C^\beta(\ov{\Omega}).
\end{align}
The preimage~$F^{-1}(0,0)$ consists exactly of solutions of~$\prl$.
The gradient w.r.t. $(\alpha,\psi)$ is given by
\begin{align}
    D_{(\alpha,\psi)} F(\lambda,\alpha,\psi)[s,\phi]
    = \parenthesis{ p\int_\Omega [\alpha+\lambda\psi]_+^{p-1}(s+\lambda\phi), \;  -\Delta\phi- p[\alpha+\lambda\psi]_+^{p-1}(s+\lambda\phi) }.
\end{align}

\

Fix~$\lm>0$ and let~$(\all, \pl)$ be a solution of~$\prl$, so that~$F(\lm,\all,\pl)=(0,0)$.
Using the notation~$\rl$ as above, we have
\begin{align}\label{eq:differential F}
    D_{(\alpha,\psi)} F(\lm,\all,\pl)[s,\phi]
    = \parenthesis{ p\int_\Omega \rlq (s+\lambda\phi), \;  -\Delta\phi-p\rlq (s+\lambda\phi)}.
\end{align}

Consider first the kernel of~$D_{(\alpha,\psi)}F(\lm,\all,\pl)$.
Let~$(s,\phi)$ be an element in the kernel.
The vanishing of the first component in~\eqref{eq:differential F} implies~$ s=-\lm\Abracket{\phi}_{\ssl}$, hence the vanishing of the second component becomes
\begin{align}
    0=-\Delta\phi -\tl \rlq [\phi]_{\ssl}=L_\lm \phi.
\end{align}
If~$\phi\neq 0$, then~$0\in \Spect(L_\lm)$.
That is, \emph{if~$0\notin \Spect(L_\lm)$, then~$\phi=0$ and hence also~$s=0$, so~$\Ker D_{(\alpha,\psi)} F(\lambda,\all,\pl)=0$. }

Given~$(t,f)\in \R\times C^r(\ov{\Omega})$, consider the equation
\begin{align}
    D_{(\alpha,\psi)} F(\lm,\all,\pl)[s,\phi]= (t,f).
\end{align}
Similarly, the relation for the first components gives
\begin{align}
     s = -\lm\Abracket{\phi}_{\ssl} + \frac{t}{p\ml}, \qquad (\mbox{ where } \ml \equiv  \ino\rlq \;)
\end{align}
while the second becomes
\begin{align}
    -\Delta\phi- \tl \rlq [\phi]_{\ssl} = f+\frac{t}{\ml}\rlq.
\end{align}
By Lemma~\ref{lem:iso}, there is a unique~$\phi\in C^{2,r}_0(\ov{\Omega})$ solving this equation, which in turn determines a unique~$s$.
Thus~$L_{\ssl}$ is also surjective, hence an isomorphism.

\

One can readily check that the above argument also holds for~$\lm=0$.

Summarizing these facts, together with the implicit function theorem, we have the following local~$C^1$ regularity of the branch of solutions.
See~\cite{AmbrosettiProdi1993primer} and compare with~\cite[Lemma 2.4]{BJ2022uniqueness}.

\begin{lemma}\label{lem1.1}
Let $p\in (1,p_N)$, $(\al_{\sscp \lm_0},\psi_{\sscp \lm_0})$ be a solution of {\rm $\prl$} with $\lm=\lm_0\geq 0$.
If $0\notin\Spect(L_{\sscp \lm_0})$, then:
\begin{itemize}
    \item[(i)] $D_{\al,\psi}F(\lm_0, \al_{\sscp \lm_0}, \psi_{\sscp \lm_0})$ is an isomorphism;
    \item[(ii)] There exists an open neighborhood $\mathcal{U}$ of $(\lm_0,\al_{\sscp \lm_0},\psi_{\sscp \lm_0})$ such that the set of solutions of {\rm $\prl$} in $\mathcal{U}$ is a $C^1$ curve of solutions $J\ni\lm\mapsto (\all,\pl)\in B$, for suitable neighborhoods $J$ of $\lm_0$ and $B$ of $(\al_{\sscp \lm_0},\psi_{\sscp \lm_0})$ in $\R\times C^{2,r}_{0}(\ov{\om}\,)$.
\end{itemize}
If $p=1$ the same is true where $F:(-1,+\infty)\times \R \times W^{2,t}_0({\Omega}) \to \R \times L^t({\Omega})$, for some $t>N$.
\end{lemma}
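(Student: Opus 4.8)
The plan is to obtain part~(i) essentially for free from the computations already carried out in the paragraphs preceding the statement, and then to deduce part~(ii) from the implicit function theorem applied to the map~$F$ of~\eqref{eF}; the only genuinely delicate point will be the $C^1$ regularity of~$F$ near a solution that may possess a nontrivial free boundary.

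For~(i), writing $L_{\lm_0}$ for the operator~\eqref{eLl} evaluated at $(\al_{\lm_0},\psi_{\lm_0})$, I would read off from the explicit formula~\eqref{eq:differential F} for $D_{(\al,\psi)}F(\lm_0,\al_{\lm_0},\psi_{\lm_0})$ that the vanishing of the first component forces $s=-\lm_0\Abracket{\phi}_{\ssl}$, so an element $(s,\phi)$ of the kernel satisfies $L_{\lm_0}\phi=0$; since $0\notin\Spect(L_{\lm_0})$ this gives $\phi=0$ and then $s=0$, i.e. injectivity. For surjectivity, given $(t,f)\in\R\times C^\beta(\ov{\om})$, after setting $s=-\lm_0\Abracket{\phi}_{\ssl}+\frac{t}{p\,\ml}$ one is reduced (the first relation then being automatically satisfied, since $\int_\om\rlq[\phi]_{\ssl}=0$) to solving $L_{\lm_0}\phi=f+\frac{t}{\ml}\rlq$ in $C^{2,\beta}_0(\ov{\om})$, which by Lemma~\ref{lem:iso} admits a unique solution, and $s$ is then determined. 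Hence $D_{(\al,\psi)}F(\lm_0,\al_{\lm_0},\psi_{\lm_0})$ is an isomorphism of $\R\times C^{2,\beta}_0(\ov{\om})$ onto $\R\times C^\beta(\ov{\om})$; the same computation goes through verbatim for $\lm_0=0$. This is~(i) (with $\Phi\equiv F$).

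For~(ii), once $F$ is known to be of class $C^1$ in a neighborhood of $(\lm_0,\al_{\lm_0},\psi_{\lm_0})$ in $(-1,+\infty)\times\R\times C^{2,\beta}_0(\ov{\om})$, part~(i) lets one invoke the implicit function theorem (see~\cite{AmbrosettiProdi1993primer}): there exist open neighborhoods $J$ of $\lm_0$ and $B$ of $(\al_{\lm_0},\psi_{\lm_0})$, and a $C^1$ map $J\ni\lm\mapsto(\all,\pl)\in B$ solving $F(\lm,\all,\pl)=(0,0)$, passing through $(\al_{\lm_0},\psi_{\lm_0})$ at $\lm=\lm_0$, and -- what one actually needs here -- such that the whole solution set $F^{-1}(0,0)\cap(J\times B)$ coincides with the graph of this map. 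Taking $\mathcal{U}=J\times B$ and recalling that $F^{-1}(0,0)$ is exactly the set of solutions of~$\prl$ gives~(ii).

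I expect the main obstacle to be precisely the $C^1$ regularity of $F$, that is of $F^2(\lm,\al,\psi)=-\Delta\psi-[\al+\lm\psi]_+^p$ seen into $C^\beta(\ov{\om})$, whose formal differential is $\phi\mapsto -\Delta\phi-\lm p\,[\al+\lm\psi]_+^{p-1}\phi$. When $\al_{\lm_0}>0$ there is nothing to check: for $(\lm,\al,\psi)$ near $(\lm_0,\al_{\lm_0},\psi_{\lm_0})$ one has $\al+\lm\psi\geq c>0$ on all of $\ov{\om}$ (since $\psi\geq 0$ and $\psi=0$ on $\partial\om$), so $[\al+\lm\psi]_+^p=(\al+\lm\psi)^p$ is a smooth function of $\al+\lm\psi$ and $F$ is $C^\infty$ there; in particular this settles the case $p=1$, for then the hypothesis $0\notin\Spect(L_{\lm_0})$ forces $\al_{\lm_0}>0$. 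The delicate situation is $\al_{\lm_0}\leq 0$, when $\om_+\Subset\om$ and a genuine free boundary $\partial\om_+$ is present: there one has to verify that the Nemytskii-type map $(\lm,\al,\psi)\mapsto[\al+\lm\psi]_+^{p-1}$ is valued in $C^\beta(\ov{\om})$ and is continuous there, which I would do by combining the regularity of the free boundary of a classical solution -- in particular the non-degeneracy of $\nabla(\al_{\lm_0}+\lm_0\psi_{\lm_0})$ on $\partial\om_+$, via the strong maximum principle and Hopf's lemma -- with an appropriate choice of the H\"older exponent (any $r<p-1$ when $1<p<2$, any $r\in(0,1)$ when $p\geq 2$), the point being that the lack of smoothness of $s\mapsto s_+^{p-1}$ at $s=0$ only produces a $C^{0,p-1}$ defect across the free boundary. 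This is the step where the argument departs from the positive-solution setting of~\cite[Lemma~2.4]{BJ2022uniqueness}, and where the bulk of the work lies; the remainder is the standard implicit-function-theorem bookkeeping as in~\cite{AmbrosettiProdi1993primer}.
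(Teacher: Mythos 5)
Your proposal follows the paper's own route: part (i) is exactly the kernel/surjectivity computation carried out before the statement (injectivity from $0\notin\Spect(L_{\sscp\lm_0})$, surjectivity by reducing to Lemma \ref{lem:iso}), and part (ii) is the implicit function theorem applied to $F$, which is all the paper does. The one place you go beyond the paper is the verification that $F$ is $C^1$ into the H\"older space near a solution with $\all\leq 0$; the concern is legitimate (the paper passes over it in silence), and your prescription on the exponent ($r<p-1$ for $1<p<2$, any $r<1$ for $p\geq 2$) is the right one. However, the mechanism you propose for it is both stronger than needed and not justifiable at this level of generality: regularity of the free boundary $\pa\om_+$ and nondegeneracy of $\nabla(\all+\lm\pl)$ there (Hopf requires an interior ball condition for $\om_+$, which is not known a priori in a general domain) are not available and, fortunately, not needed. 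It suffices to note that $u=\al+\lm\psi$ is Lipschitz with norm controlled by $\|\psi\|_{C^{2,r}}$, that $s\mapsto s_+^{p-1}$ is (locally) $C^{0,\kappa}$ with $\kappa=\min\{p-1,1\}$, and to interpolate the two elementary bounds $|g(x)-g(y)|\lesssim \|h\|_\ii^{\kappa}$ and $|g(x)-g(y)|\lesssim |x-y|^{\kappa}$ for $g=[u+h]_+^{p-1}-[u]_+^{p-1}$; this gives the $o(\|h\|_{C^{2,r}})$ remainder estimate in $C^{r'}$ for any $r'<\kappa$, with no information on $\pa\om_+$ whatsoever. Finally, your parenthetical claim that for $p=1$ the hypothesis $0\notin\Spect(L_{\sscp\lm_0})$ forces $\al_{\sscp\lm_0}>0$ is not substantiated (the two conditions are not related through Lemma \ref{lem:spectral}, which concerns the eigenvalue $0$ of $T_{\ssl}$, i.e.\ $\sg=\ii$, not $\sg=0$); in the paper this case is harmless only because the lemma is invoked for $p=1$ on positive solutions.
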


Let us equivalently write the characterization of~$\mu_1$ as follows,
\begin{align}
    \frac{1}{\mu_1}=\inf \left\{\frac{1}{\mathcal{B}(\phi,\phi)}\,:\,\phi\in H^1_0(\om),\; \Abracket{\phi,\phi}_{H^1_0}=1\right\},
\end{align}
then it is readily seen that
\begin{align}\label{4.1}
\sg_1=\sg_1(\all,\pl)=\tl\left(\frac{1}{\mu_1}-1\right)=\inf\limits_{\phi \in H^1_0(\om)\setminus \{0\}}
\dfrac{\ino |\nabla \phi|^2 - \tl \ino \rlq [\phi]_{\ssl}^2 }{\ino \rlq [\phi]_{\ssl}^2}.
\end{align}
On the other hand, the standard first eigenvalue is characterized by
\begin{align}
     \nu_{1}(\all,\pl)\coloneqq \inf\limits_{w\in H^{1}_0(\om)\setminus\{0\}}
        \dfrac{\ino |\nabla w|^2-\tl \ino \rlq w^2  }{\ino \rlq w^2}.
\end{align}

\begin{prop}\label{preig}
    Let~$(\all,\pl)$ is a solution of~{\rm $\prl$}.
    Then
    \begin{itemize}
        \item[(i)] $\nu_1(\all,\pl)\geq \Lambda(\om,2p)-\lm p$,
        \item[(ii)] $\sg_1(\all,\pl)>\nu_1(\all,\pl)$.
    \end{itemize}
    In particular, if~$\lambda p \leq \Lambda(\Omega, 2p)$, then~$\sg_1(\all,\pl)>\nu_1(\all,\pl)\geq 0$.

\end{prop}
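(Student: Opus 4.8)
The plan is to treat the two assertions separately: \textbf{(i)} follows from H\"older's inequality together with the constraint $\ino\rl=1$, while \textbf{(ii)} follows from an elementary identity relating $[\phi]_{\ssl}$ to $\phi$ in the weighted $L^2$ inner product, upgraded to a strict inequality by a compactness argument.

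For \textbf{(i)}, given $w\in H^1_0(\om)\setminus\{0\}$ I would write $\ino\rlq w^2=\ino[\all+\lm\pl]_+^{p-1}w^2$ and apply H\"older with exponents $q$ and $p$: since $(p-1)q=p$, the first factor equals $\left(\ino[\all+\lm\pl]_+^{p}\right)^{1/q}=1$ by the constraint, so that $\ino\rlq w^2\le\left(\ino|w|^{2p}\right)^{1/p}$. By the definition of $\Lambda(\om,2p)$ this gives $\ino|\nabla w|^2\ge\Lambda(\om,2p)\left(\ino|w|^{2p}\right)^{1/p}\ge\Lambda(\om,2p)\ino\rlq w^2$, hence $\ino|\nabla w|^2-\tl\ino\rlq w^2\ge(\Lambda(\om,2p)-\lm p)\ino\rlq w^2$; dividing by $\ino\rlq w^2$ and taking the infimum over $w$ yields the stated lower bound for $\nu_1(\all,\pl)$.

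For \textbf{(ii)}, expanding the square and using $\ino\rlq\phi=\Abracket{\phi}_{\ssl}\ino\rlq$ gives, for every $\phi\in H^1_0(\om)$,
\begin{align}
\ino\rlq[\phi]_{\ssl}^2=\ino\rlq\phi^2-\Abracket{\phi}_{\ssl}^2\ino\rlq\le\ino\rlq\phi^2,
\end{align}
with equality if and only if $\ino\rlq\phi=0$. Rewriting \eqref{4.1} as $\sg_1+\tl=\inf_{\phi}\frac{\ino|\nabla\phi|^2}{\ino\rlq[\phi]_{\ssl}^2}$ and similarly $\nu_1+\tl=\inf_{w}\frac{\ino|\nabla w|^2}{\ino\rlq w^2}$, the displayed inequality already gives $\sg_1\ge\nu_1$ by comparing the two Rayleigh quotients at a common test function. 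To obtain the strict inequality I would argue by contradiction. First, the infimum defining $\nu_1+\tl$ is attained, since $w\mapsto\ino\rlq w^2$ is weakly continuous on $H^1_0(\om)$ (as $\rlq\in L^\infty(\om)$ and $H^1_0(\om)\hookrightarrow L^2(\om)$ compactly); moreover, replacing a minimizer by its modulus and applying the strong maximum principle to the nonnegative supersolution $-\Delta|w|=(\nu_1+\tl)\rlq|w|\ge0$ yields a minimizer which is strictly positive in $\om$, so that $\ino\rlq w>0$ for it. Now suppose $\sg_1=\nu_1$ and pick a minimizing sequence $\phi_n$ for $\sg_1+\tl$ normalized by $\ino|\nabla\phi_n|^2=1$; the chain $\nu_1+\tl\le\frac{1}{\ino\rlq\phi_n^2}\le\frac{1}{\ino\rlq[\phi_n]_{\ssl}^2}\to\nu_1+\tl$ forces both $\ino\rlq\phi_n^2$ and $\ino\rlq[\phi_n]_{\ssl}^2$ to converge to $(\nu_1+\tl)^{-1}$, whence $\Abracket{\phi_n}_{\ssl}^2\ino\rlq\to0$ and therefore $\ino\rlq\phi_n\to0$. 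Passing to a weak limit $\phi_n\rightharpoonup w$ in $H^1_0(\om)$, strongly in $L^2(\om)$, we get $\ino\rlq w^2=(\nu_1+\tl)^{-1}>0$ and $\ino|\nabla w|^2\le1$, so $w$ is a minimizer of $\nu_1+\tl$, while at the same time $\ino\rlq w=0$; since any such minimizer must be of one sign (split $w=w^+-w^-$: if both $\ino\rlq(w^+)^2$ and $\ino\rlq(w^-)^2$ are positive then $w^+$ and $w^-$ are themselves minimizers, hence weak solutions, and the strong maximum principle forces one of them to vanish; if one of these weighted norms is zero, the harmonicity of $w$ on $\om\setminus\ov{\om_+}$ and the maximum principle there show $w$ keeps a sign) and, by the previous step, strictly positive, we get $\ino\rlq w>0$, a contradiction. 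The last assertion is then immediate: if $\lm p<\Lambda(\om,2p)$, then \textbf{(i)} gives $\nu_1(\all,\pl)>0$ and \textbf{(ii)} gives $\sg_1(\all,\pl)>\nu_1(\all,\pl)$.

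The step I expect to be the main obstacle is the strict inequality in \textbf{(ii)}. Because $\rlq$ may vanish on the large open set $\om\setminus\ov{\om_+}$, one cannot invoke a Krein--Rutman-type statement to know a priori that the first weighted eigenfunction is of one sign; the $w^\pm$ splitting combined with the strong maximum principle for supersolutions (together with the harmonicity of eigenfunctions on the vanishing region) is what makes this work, and one must also check that the compactness argument genuinely produces a minimizer of $\nu_1+\tl$ with zero weighted average, which is the precise point where the contradiction is generated.
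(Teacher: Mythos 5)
Your proof is correct and follows essentially the same route as the paper: part (i) is the identical H\"older argument exploiting the constraint $\ino\rl=1$, and part (ii) derives a contradiction from $\sg_1=\nu_1$ by producing a first eigenfunction of the weighted problem $\nu_1$ with vanishing weighted average, which is impossible because such an eigenfunction must be one-signed. The only difference is one of detail: you replace the paper's direct use of attained eigenfunctions for both quotients by a minimizing-sequence/weak-compactness argument for $\sg_1$, and you spell out the sign property of the $\nu_1$-minimizer (the $w^{\pm}$ splitting, the strong maximum principle, and harmonicity on $\om\setminus\ov{\om_+}$ where the weight vanishes), a point the paper invokes without proof.
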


    \begin{proof}
        (i) The argument is similar to that in~\cite{BJ2022uniqueness}, only noting that here~$\rl$ can vanish in a large subset but still integrates to 1.
        W.l.o.g. we consider~$\lm>0$.
        Let $w \in C^{1}_0(\ov{\om}\,)$, $w\equiv \!\!\!\!\!/ \;0$ then we have
        \begin{align}
        \dfrac{\ino |\nabla w|^2}{\ino \rlq w ^2}\geq \dfrac{1}{ \left(\ino [\all+\lm\pl]_+^p\right)^{\frac1q}}
        \dfrac{\ino |\nabla w|^2}{\left(\ino w^{2p}\right)^{\frac1p}}=
        \dfrac{\ino |\nabla w|^2}{\left(\ino w^{2p}\right)^{\frac1p}}\geq {\Lambda(\om,2p)}
        \end{align}
        which implies that~$\nu_{1}(\all,\pl)\geq {\Lambda(\om,2p)}-\lm p$.

        (ii) Clearly $\sg_1(\all,\pl)\geq \nu_{1}(\all,\pl)$.
        Argue by contradiction and assume that $\sg_1(\all,\pl)=\nu_{1}(\all,\pl)$, then there exists some $\{\phi,w\} \in H^1_0(\om)\setminus \{0\}$ such that
            $$
            \sg_1(\all,\pl)=\dfrac{\ino |\nabla \phi|^2 - \tl \ino \rlq [\phi]_{\ssl}^2 }{\ino \rlq [\phi]_{\ssl}^2}=
            \dfrac{\ino |\nabla w|^2-\lm p \ino \rlq w^2  }{\ino \rlq w^2}=\nu_{1}(\all,\pl)
            $$
            However, if $\Abracket{\phi}_{\ssl}\neq 0$, then
            $$
            \nu_{1}(\all,\pl)\leq \dfrac{\ino |\nabla \phi|^2 - \tl \ino \rlq \phi^2 }{\ino \rlq \phi^2}<\sg_1(\all,\pl),
            $$
            which is absurd.
            Thus we have $\Abracket{\phi}_{\ssl}= 0$, which is also impossible: as an eigenfunction of $\nu_{1}(\all,\pl)$, ~$\phi$ should have a sign, contradicting~$\Abracket{\phi}_{\ssl}=0$.
    \end{proof}

\bigskip

\section{Monotone parametrization of solutions}\label{sec3.4}
In this section we are concerned with the monotonicity of the energy
$$
2 E_{\ssl}=\ino \rl \pl,
$$
and of $\all$ for solutions of $\prl$ with $\sg_1(\all,\pl)>0$ (see \eqref{4.1}).
The energy monotonicity is detected by making use of the spectral setting in the previous section.
We recall that $\ml=\ino \rlq$ and $L_{\ssl}$ is the linearized operator defined in \eqref{eLl}.
Recall by Remark \ref{rem:orth} that $P_1\colon H^1_0(\Omega) \to \mathcal{H}_1$ denotes the orthogonal projection on the the closed subspace~$\mathcal{H}_1$.

\begin{prop}\label{pr-enrg}
    Let $(\al_{\ssl_0},\psi_{\ssl_0})$ be a solution of {\rm $\prl$} with $\lm=\lm_0\geq 0$ and suppose that $0\notin\Spect(L_{\ssl_0})$.
    Then, locally near $\lm_0$, the map $\lm\mapsto (\all,\pl)$ is a $C^1$ simple curve and {\rm
    \begin{align}
        2 E_{\ssl}=\ino \rl \pl,  \qquad \vl\coloneqq \dfrac{\dd \pl}{\dd \lm}
    \end{align}}
    are $C^1$ functions in $\lm$ with $\vl \in C^{2}_0(\ov{\om})$ for $p>1$, $\vl \in W^{2,t}_0({\om})$ for some $t>N$ for $p=1$, and {\rm
    \begin{align}\label{8.12.11}
        \frac{\dd E_{\ssl}}{\dd \lm} = \ino \rl \vl=\tl \ino\rlq[\vl]_{\ssl}[\pl]_{\ssl} +  p \ino  [\pl]_{\ssl}^2.
    \end{align}}

   If, in addition, $\sg_1=\sg_1(\all,\pl)> 0$, then  {\rm
    \begin{align}\label{21.06.17}
        \int_\Omega \rlq [\vl]_{\ssl} [\pl]_{\ssl}
        \geq\sg_1\frac{\ml}{p} \Abracket{[P_1(\vl)]_{\ssl}^2}_{\lm},
    \end{align}}
    and in particular $\frac{d \el}{d\lm}>0$.
\end{prop}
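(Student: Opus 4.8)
The plan is to differentiate the identities defining the curve at $\lm_0$, to identify the linearized equation satisfied by $\vl$, and then to test that equation against $\vl$ itself so that the Rayleigh quotient defining $\sg_1$ in~\eqref{4.1} appears on the nose; the only genuinely new ingredient compared with~\cite{BJ2022uniqueness} is the possible vanishing of the weight $\rlq$ on the large set $\om\setminus\ov{\om_+}$, which is dealt with through the orthogonal splitting of Lemma~\ref{lem:spectral}.

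First I would record the differentiated equations. Since $0\notin\Spect(L_{\ssl_0})$, Lemma~\ref{lem1.1} provides a local $C^1$ curve $\lm\mapsto(\all,\pl)$ in $\R\times C^{2,r}_0(\ov\om)$, so that $\dall\coloneqq\frac{\dd\all}{\dd\lm}$ and $\vl\coloneqq\frac{\dd\pl}{\dd\lm}\in C^2_0(\ov\om)$ are well defined and $E_{\ssl}$ is differentiable in $\lm$. Differentiating the constraint $\ino[\all+\lm\pl]_+^p=1$ gives $\ino\rlq(\dall+\pl+\lm\vl)=0$, i.e. $\dall=-\Abracket{\pl}_{\ssl}-\lm\Abracket{\vl}_{\ssl}$, hence $\dall+\pl+\lm\vl=[\pl]_{\ssl}+\lm[\vl]_{\ssl}$; differentiating $-\Delta\pl=[\all+\lm\pl]_+^p$ then gives
\[
 L_{\ssl}\vl=-\Delta\vl-\tl\rlq[\vl]_{\ssl}=p\,\rlq\,[\pl]_{\ssl}.
\]
For~\eqref{8.12.11} I would differentiate $2E_{\ssl}=\ino|\nabla\pl|^2$ to get $\frac{\dd E_{\ssl}}{\dd\lm}=\ino\nabla\pl\cdot\nabla\vl=\ino(-\Delta\pl)\vl=\ino\rl\vl$; integrating by parts the other way, substituting the above expression for $-\Delta\vl$, and using that $\ino\rlq[\phi]_{\ssl}=0$ for every $\phi$ (immediate from the definition of $\Abracket{\cdot}_{\ssl}$) to replace $\pl$ by $[\pl]_{\ssl}$ in the two resulting integrals, one arrives at~\eqref{8.12.11}.

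For the spectral estimate I would test $L_{\ssl}\vl=p\rlq[\pl]_{\ssl}$ against $\vl$ and once more use $\ino\rlq[\phi]_{\ssl}=0$, obtaining
\[
 \ino|\nabla\vl|^2-\tl\ino\rlq[\vl]_{\ssl}^2=p\ino\rlq[\vl]_{\ssl}[\pl]_{\ssl}.
\]
The crucial observation is that $\rlq[P_0(\vl)]_{\ssl}\equiv0$: by~\eqref{eq:og decomp} the function $P_0(\vl)$ is constant on $\om_+$, which carries all the mass of $\rlq$, so $\Abracket{P_0(\vl)}_{\ssl}$ equals that constant, $[P_0(\vl)]_{\ssl}$ vanishes on $\om_+$, and $\rlq$ vanishes off $\om_+$; hence $\rlq[\vl]_{\ssl}=\rlq[P_1(\vl)]_{\ssl}$ and $\ino\rlq[\vl]_{\ssl}^2=\ino\rlq[P_1(\vl)]_{\ssl}^2$. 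Moreover $\ino\rlq[\vl]_{\ssl}^2>0$, for otherwise $\vl$ would be constant on $\om_+$, hence $-\Delta\vl\equiv0$ there, which by the displayed equation for $L_{\ssl}\vl$ would force $\pl$ to be constant on $\om_+$ — impossible, since $-\Delta\pl=\rl>0$ in $\om_+$. Thus $\vl$ is admissible in the variational characterization~\eqref{4.1} of $\sg_1$, giving $\ino|\nabla\vl|^2-\tl\ino\rlq[\vl]_{\ssl}^2\geq\sg_1\ino\rlq[\vl]_{\ssl}^2$; combining this with the two displays above yields
\[
 p\ino\rlq[\vl]_{\ssl}[\pl]_{\ssl}\geq\sg_1\ino\rlq[P_1(\vl)]_{\ssl}^2=\sg_1\,\ml\,\Abracket{[P_1(\vl)]_{\ssl}^2}_{\lm},
\]
which is~\eqref{21.06.17}.

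Finally, from~\eqref{21.06.17} and $\sg_1>0$ one gets $\tl\ino\rlq[\vl]_{\ssl}[\pl]_{\ssl}\geq0$ (recall $\tl=\lm p\geq0$), while the argument just used shows $\pl$ is nonconstant on $\om_+$, so the term $p\ino\rlq[\pl]_{\ssl}^2$ in~\eqref{8.12.11} is strictly positive; hence $\frac{\dd E_{\ssl}}{\dd\lm}>0$, the case $\lm=0$ (where $\tl=0$ and $\rlq\equiv1$) being included. I expect the only real obstacle to be precisely this degeneracy of the weight: unlike in~\cite{BJ2022uniqueness} the quotient~\eqref{4.1} controls only the $\mathcal{H}_1$-component of $\vl$, and it is the identification in Lemma~\ref{lem:spectral}(iii) of $\Eigen(T_{\ssl};0)$ with the functions constant on $\om_+$ that makes $\rlq[P_0(\vl)]_{\ssl}=0$ and allows the estimate to close with $[P_1(\vl)]_{\ssl}^2$ on the right-hand side; everything else is bookkeeping with $\Abracket{\cdot}_{\ssl}$.
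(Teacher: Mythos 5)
Your proposal is correct, and the first half coincides with the paper's argument: the local $C^1$ curve from Lemma~\ref{lem1.1}, the differentiated constraint, the linearized equation $-\Delta\vl=\tl\rlq[\vl]_{\ssl}+p\rlq[\pl]_{\ssl}$, and the two equalities in \eqref{8.12.11} are obtained in exactly the same way. Where you genuinely diverge is the proof of \eqref{21.06.17}: the paper expands $\pl$ and $\vl$ in the eigenbasis of $T_{\ssl}$, tests the equation for $\vl$ against each eigenfunction $\phi_j$ to get the coefficient relations $\sg_j\gamma_j=p\beta_j$, and then bounds the resulting series termwise using $\sg_j\geq\sg_1$; you instead test the equation once against $\vl$ itself, getting $\ino|\nabla\vl|^2-\tl\ino\rlq[\vl]_{\ssl}^2=p\ino\rlq[\vl]_{\ssl}[\pl]_{\ssl}$, and invoke the Rayleigh-quotient characterization \eqref{4.1} of $\sg_1$, so no Fourier expansion is needed. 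Both routes hinge on the same decisive fact, namely $\rlq[P_0(\vl)]_{\ssl}\equiv 0$, coming from the identification of $\Eigen(T_{\ssl};0)$ in Lemma~\ref{lem:spectral}, which is what allows the estimate to close with $[P_1(\vl)]_{\ssl}^2$ despite the degenerate weight. Your version buys a shorter proof that treats $\all\geq 0$ and $\all<0$ uniformly (the paper defers $\all\geq0$ to \cite{BJ2022uniqueness}), while the paper's expansion records the finer termwise information $\sg_j\gamma_j=p\beta_j$, which a single test function discards. Two minor remarks: the inequality $\ino|\nabla\vl|^2-\tl\ino\rlq[\vl]_{\ssl}^2\geq\sg_1\ino\rlq[\vl]_{\ssl}^2$ holds trivially when the denominator vanishes, so your digression proving $\ino\rlq[\vl]_{\ssl}^2>0$ is not needed for \eqref{21.06.17} (though your observation that $\pl$ cannot be constant on $\om_+$, since $-\Delta\pl=\rl>0$ there, is exactly what gives the strict positivity of the last term and hence of $\frac{\dd\el}{\dd\lm}$); and your computation yields $p\ino\rlq[\pl]_{\ssl}^2$ as the last term of \eqref{8.12.11}, in agreement with the paper's own derivation, the weight $\rlq$ being simply missing in the printed statement.
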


The corresponding statement was obtained for positive solutions in~\cite{BJ2022uniqueness}.
For solutions which are not necessarily positive, more care is needed for the spectral analysis.
We already see that there is a new subspace~$\Eigen(T_{\ssl};0)$ appearing in this context.
It turns out that this will not affect the energy estimates since the derivative of the energy “does not see” the~$\Eigen(T_{\ssl};0)$ subspace.
This will be clear in the proof and is the crucial point for the argument to work.

\begin{proof}[Proof of Proposition~\ref{pr-enrg}]
    That~$(\all,\pl)$ is~$C^1$ in~$\lm$ locally around~$(\al_{\ssl_0},\psi_{\ssl_0})$ follows from Lemma~\ref{lem1.1}. 
    Thus~$\vl \in C^{2,r}_0(\ov{\om})$ for $p>1$, $\vl \in W^{2,t}_0({\om})$ for some $t>N$ for $p=1$ and satisfies
    \begin{align}
        -\Delta \vl =\tl \rlq \vl +p \rlq\pl +p\rlq \frac{\dd\all}{\dd\lm},
    \end{align}
    where~$\frac{\dd\all}{\dd\lm}$ can be computed by the unit mass constraint in $\prl$, that is
$$
p\frac{\dd\all}{\dd\lm}=-\tl  \Abracket{\vl}_{\ssl}-p\Abracket{\pl}_{\ssl}.
$$
Therefore, we conclude that $\vl\in H^1_0(\Omega)$ is a solution of,
\begin{align}\label{1b1}
-\Delta \vl =\tl \rlq [\vl]_{\ssl} +p \rlq[\pl]_{\ssl}.
\end{align}
 
By using $\prl$, we also have $E_{\ssl}=\frac12\ino |\nabla \pl|^2$, and in particular it holds,
$$
\frac{\dd E_{\lm}}{\dd \lm} =\ino \Abracket{\nabla \vl,\nabla \pl}=\ino \vl (-\Delta \pl)
=\ino \rl\vl,
$$
which proves the first equality in \rife{8.12.11}.
Meanwhile integrating by parts and using~\eqref{1b1} shows that,
\begin{align}\label{2b1}
\ino\rl \vl=\ino (-\Delta \pl)\vl=\ino \pl(-\Delta \vl)= \tl \ino\rlq[\vl]_{\ssl}\pl +  p \ino \rlq [\pl]_{\ssl}\pl,
\end{align}
which is the second equality in \rife{8.12.11}.

It remains to prove the monotonicity of~$E_{\lm}$.
The case~$\all\geq 0$ has been already discussed in~\cite[Proposition 3.1]{BJ2022uniqueness}, so we may assume~$\all<0$.
We use the decomposition in Lemma~\ref{lem:spectral} and Remark~\ref{rem:orth} and, recalling that
    \begin{align}
        1=\ino \Abracket{\nabla\phi_j,\nabla\phi_j}
        = \ino \frac{\tl}{\mu_j} \rlq [\phi_j]_{\ssl}^2,
    \end{align}

write
\begin{align}
    \pl=P_0(\pl)+\sum\limits_{j=1}^{+\ii}\beta_j\phi_j, & &
    \mbox{ with }
    \beta_j=\frac{\tl}{\mu_j}\ino \rlq[\phi_j]_{\ssl}\pl
    = \frac{\tl}{\mu_j}\ino \rlq[\phi_j]_{\ssl}[\pl]_{\ssl},
\end{align}
\begin{align}
    \vl=P_0(\vl)+\sum\limits_{j=1}^{+\ii}\gamma_j\phi_j,
    & & \mbox{ with }
    \gamma_j=\frac{\tl}{\mu_j}\ino \rlq [\phi_j]_{\ssl}\vl
    =\frac{\tl}{\mu_j} \ino \rlq [\phi_j]_{\ssl}[\vl]_{\ssl}.
\end{align}

Testing~\eqref{1b1} against~$\phi_j$ and combining with~\eqref{lineq0.1} we get that,
\begin{align}\label{lamq31}
\sg_{j}\ino\rlq[\phi_j]_{\ssl}[\vl]_{\ssl} =p
\ino\rlq [\phi_j]_{\ssl}[\pl]_{\ssl}, \qquad (\mbox{here } \sg_j\equiv \sg_j(\all,\pl))
\end{align}
that is,~$\sg_j\gamma_j=p{\beta_j}$,
which hold true for each~$j\geq 1$.
The crucial observation is that~$\rlq[P_0(\pl)]_{\ssl}\equiv 0$ and~$\rlq [P_0(\vl)]_{\ssl}\equiv 0$, hence
\begin{align}
    p\ino\rlq[\vl]_{\ssl}[\pl]_{\ssl}
    &= p\ino\rlq \parenthesis{ [P_0(\vl)]_{\ssl} + [P_1(\vl)]_{\ssl} } \parenthesis{ [P_0(\pl)]_{\ssl} + [P_1(\pl)]_{\ssl}} \\
    &= p\ino \rlq [P_1(\vl)]_{\ssl} [P_1(\pl)]_{\ssl} \\
    &= p\ino \rlq \parenthesis{\sum_{j\geq 1} \gamma_j [\phi_j]_{\ssl}} \parenthesis{\sum_{k\geq 1} \beta_k [\phi_k]_{\ssl}}  \\
    &= p\sum_{j\geq 1}\gamma_j \beta_j \ino \rlq [\phi_j]_{\ssl}^2\\
    &= \sum_{j\geq 1} p \gamma_j\beta_j \frac{\mu_j}{\tl} \\
    &= \sum_{j\geq 1} \frac{\mu_j}{\tl} \sg_j \gamma_j^2.
\end{align}
The desired estimate in~\eqref{21.06.17} follows from
\begin{align}
    \Abracket{[P_1(\vl)]_{\ssl}^2}_{\ssl}
    =& \frac{1}{\ml} \ino \rlq \parenthesis{\sum_{j\geq 1} \gamma_j[\phi_j]_{\ssl}}^2
    =\frac{1}{\ml} \sum_{j\geq 1} \frac{\mu_j}{\tl} \gamma_j^2
\end{align}
which can be estimated from above, as long as~$\sg_1>0$,
\begin{align}
    \Abracket{[P_1(\vl)]_{\ssl}^2}_{\ssl}
    \leq \frac{1}{\ml\sg_1}\sum_{j\geq 1} \frac{\mu_j}{\tl} \sg_j \gamma_j^2
    =\frac{1}{\ml\sg_1} p\ino\rlq[\vl]_{\ssl}[\pl]_{\ssl} .
\end{align}
Thus the first term in~\eqref{8.12.11} is non negative, and the right hand side of~\eqref{8.12.11} is thus positive, proving the monotonicity of~$E_\lm$.
\end{proof}

\

Next we prove the monotonicity of $\all$ whenever $\sg_1(\all,\pl)>0$.

For this and later purposes, it is convenient to denote $\xil=\lm \pl$ which satisfies
\begin{align}\label{ul.1.1}
\graf{-\Delta \xil =\lm \left[\all+\xil\right]_+^p\quad \mbox{in}\;\;\om,\\  \\
\xil=0 \;\; \mbox{on}\;\;\pa\om ,\\  \\
\bigintss\limits_{\om}  \left[\all+\xil\right]_+^p=1.
}
\end{align}
In terms of~$\xil$ we have $\rl=\left[\all+\xil\right]_+^p$ and $\rlq=\left[\all+\xil\right]_+^{p-1}$.

\begin{prop}\label{pr3.2.best}
Let $(\all,\pl)$  solve {\rm $\prl$} with $\lm\geq 0$.
If $\sg_1(\all,\pl)>0$ then $\dfrac{d \all}{d \lm}<0$.
\end{prop}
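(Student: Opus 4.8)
The plan is to compute $\frac{\dd\all}{\dd\lm}$ explicitly in terms of the auxiliary function $\vl$ introduced in Proposition \ref{pr-enrg}, and then to bound the right-hand side using the positivity of $\sg_1(\all,\pl)$. Recall from the proof of Proposition \ref{pr-enrg} that, where $0\notin\Spect(L_{\ssl})$ (which is guaranteed when $\sg_1>0$), the curve $\lm\mapsto(\all,\pl)$ is $C^1$, that $\ml=\ino\rlq$, and that differentiating the mass constraint $\ino[\all+\lm\pl]_+^p=1$ gives
\begin{align}\label{eq:alpha-prime}
    p\,\frac{\dd\all}{\dd\lm}\ml = -\tl\ino\rlq\vl - p\ino\rlq\pl,
\end{align}
or equivalently $p\frac{\dd\all}{\dd\lm} = -\tl\Abracket{\vl}_{\ssl} - p\Abracket{\pl}_{\ssl}$. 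So the task reduces to showing that $\tl\Abracket{\vl}_{\ssl} + p\Abracket{\pl}_{\ssl}>0$, i.e. that $\ino\rlq(\tl\vl + p\pl)>0$.

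The key is to exploit equation \eqref{1b1}, namely $-\Delta\vl = \tl\rlq[\vl]_{\ssl} + p\rlq[\pl]_{\ssl}$, together with $-\Delta\pl = \rl = \rlq(\all+\xil) = \rlq\all + \lm\rlq\pl$ wait — more usefully, I would test \eqref{1b1} against $\pl$ and separately against $\vl$, and also use the equation for $\pl$ (i.e. $-\Delta\pl=\rl$) tested against $\vl$, to produce a relation among $\ino\rl\vl$, $\ino\rlq[\vl]_{\ssl}[\pl]_{\ssl}$, $\ino[\pl]_{\ssl}^2$, and the quantity $\ino\rlq\vl$ that I want to control. Concretely: integrating \eqref{1b1} against $\pl$ recovers \eqref{2b1}/\eqref{8.12.11}, which under $\sg_1>0$ already gives $\ino\rlq[\vl]_{\ssl}[\pl]_{\ssl}\geq 0$ via \eqref{21.06.17}. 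The remaining point is to relate $\Abracket{\vl}_{\ssl}$ to these nonnegative spectral quantities. I would integrate \eqref{1b1} against the constant function — or rather, since constants are not in $H^1_0$, integrate \eqref{1b1} against the torsion function $w_\om$ solving $-\Delta w_\om=1$, $w_\om|_{\pa\om}=0$; this yields $\ino\vl = \tl\ino w_\om\rlq[\vl]_{\ssl} + p\ino w_\om\rlq[\pl]_{\ssl}$, which is not quite $\Abracket{\vl}_{\ssl}$. A cleaner route: test \eqref{1b1} against $G*\rlq$ (the Newtonian potential of the weight), giving $\ino\rlq\vl = \tl\ino (G*\rlq)\,\rlq[\vl]_{\ssl} + p\ino(G*\rlq)\,\rlq[\pl]_{\ssl}$; note $G*\rlq \geq 0$. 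Then positivity of $\Abracket{\vl}_{\ssl}$ would follow if the right side were shown positive — but the two terms carry the functions $[\vl]_{\ssl},[\pl]_{\ssl}$ which change sign, so termwise positivity fails and a genuinely spectral argument is needed.

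The honest spectral approach, which I expect to be the crux, is to expand everything in the eigenbasis $\{\phi_j\}$ of $T_{\ssl}$ as in the proof of Proposition \ref{pr-enrg}: write $\pl = P_0(\pl) + \sum_j\beta_j\phi_j$ and $\vl = P_0(\vl)+\sum_j\gamma_j\phi_j$ with $\sg_j\gamma_j = p\beta_j$, and observe that $\rlq[\pl]_{\ssl}$ and $\rlq[\vl]_{\ssl}$ only see the $\mathcal{H}_1$-components. Using $-\Delta\pl=\rl$ one gets $\beta_j = \ino\Abracket{\nabla\pl,\nabla\phi_j} = \ino\rl\phi_j$; combined with $\sg_j\gamma_j=p\beta_j$ and the fact that $\tl\vl + p\pl$ appears naturally, I would express $\ino\rlq(\tl\vl+p\pl)$ as a sum $\sum_j c_j(\sg_j)\,\beta_j\gamma_j$ or $\sum_j (\text{something})\,\beta_j^2$ with positive coefficients whenever all $\sg_j>0$, using that $\sg_1>0$ forces every $\sg_j\geq\sg_1>0$. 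The main obstacle is the bookkeeping: one must check that the "constant" contribution $\Abracket{\cdot}_{\ssl}$ — equivalently the $P_0$-part and the mean — assembles into the right sign, and in particular that no cross term between $P_0$ and $\mathcal{H}_1$ spoils positivity; the saving grace, exactly as flagged after the statement of Proposition \ref{pr-enrg}, is that the weight $\rlq$ annihilates $[P_0(\cdot)]_{\ssl}$, so the $P_0$-components drop out of every $\rlq$-weighted pairing, and one is left with a manifestly positive quadratic form in the Fourier coefficients $\{\beta_j\}$ provided $\sg_1>0$. Assembling this gives $\tl\Abracket{\vl}_{\ssl}+p\Abracket{\pl}_{\ssl}>0$, hence $\frac{\dd\all}{\dd\lm}<0$ by \eqref{eq:alpha-prime}.
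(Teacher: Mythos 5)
Your reduction is fine: differentiating the constraint does give $p\frac{d\all}{d\lm}=-\tl\Abracket{\vl}_{\ssl}-p\Abracket{\pl}_{\ssl}$, so everything hinges on the positivity of $\tl\Abracket{\vl}_{\ssl}+p\Abracket{\pl}_{\ssl}$, equivalently of $\Abracket{\wl}_{\ssl}$ with $\wl=\pl+\lm\vl$. The gap is at the crux, where you claim that expanding in the eigenbasis of $T_{\ssl}$ leaves ``a manifestly positive quadratic form in the $\beta_j$''. The quantity $\ino\rlq(\tl\vl+p\pl)$ is \emph{linear} in $(\vl,\pl)$, not quadratic: writing $\pl=P_0(\pl)+\sum_j\beta_j\phi_j$, $\vl=P_0(\vl)+\sum_j\gamma_j\phi_j$ and using $\sg_j\gamma_j=p\beta_j$, one gets $\tl\Abracket{\vl}_{\ssl}+p\Abracket{\pl}_{\ssl}=\tl c_{\vl}+p\,c_{\pl}+\sum_j p\bigl(1+\tfrac{\tl}{\sg_j}\bigr)\beta_j\Abracket{\phi_j}_{\ssl}$, where $c_{\vl},c_{\pl}$ are the constant values of the $P_0$-parts on $\om_+$. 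The terms $\beta_j\Abracket{\phi_j}_{\ssl}$ have no sign (nor do $c_{\vl},c_{\pl}$), and $\sg_j\geq\sg_1>0$ cannot force the sign of a linear expression. Moreover, your statement that the $P_0$-components drop out of ``every $\rlq$-weighted pairing'' is true only for pairings of the projected quantities $[\cdot]_{\ssl}$; they do not drop out of the unprojected average $\ino\rlq\vl$, which is precisely what you must control. So the spectral expansion, which works for $\frac{d\el}{d\lm}$ because that derivative is governed by the genuinely quadratic quantity $\ino\rlq[\vl]_{\ssl}[\pl]_{\ssl}$, does not deliver the sign of the weighted mean, and your proposal stalls exactly where the proof needs its key idea.

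The paper's argument is different and supplies two inputs your sketch lacks. Working with $\xil=\lm\pl$ and $\wl=\frac{d\xil}{d\lm}$, which solves $-\Delta\wl=\tl\rlq[\wl]_{\ssl}+\rl$, one first tests this equation against $\wl$ itself and invokes the Rayleigh-quotient characterization \eqref{4.1} of $\sg_1$ to get $\all\ino\rlq\wl+\ino\rlq\wl\xil=\ino|\nabla\wl|^2-\tl\ino\rlq[\wl]_{\ssl}^2\geq\sg_1\ino\rlq[\wl]_{\ssl}^2$; then one tests the equation \eqref{ul.1.1} for $\xil$ against $\wl$ and combines the two identities to eliminate the uncontrolled term $\ino\rlq\wl\xil$. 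The constraint enters a second time through the identity $\all+\Abracket{\xil}_{\ssl}=\frac{\ino\rl}{\ml}=\frac{1}{\ml}$, yielding $\tl\Abracket{\wl}_{\ssl}\geq\lm(p-1)\sg_1\ino\rlq[\wl]_{\ssl}^2+\ino\rl\xil>0$ for $p>1$, $\lm>0$ (the cases $p=1$ or $\lm=0$ are settled by a direct computation giving $\Abracket{\wl}_{\ssl}=2\el$, respectively $2E_0$), whence $\frac{d\all}{d\lm}=-\Abracket{\wl}_{\ssl}<0$. Testing the linearized equation against $\wl$ rather than decomposing it spectrally, and exploiting the constraint identity to close the algebra, are the missing ideas; without them positivity of $\Abracket{\wl}_{\ssl}$ does not follow from $\sg_1>0$ by the route you outline.
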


\begin{proof} For later references we provide two proofs. The first one is a straightforward consequence of Proposition \ref{pr-enrg} and Proposition \ref{pro:lat.1} below.\\
The second proof goes as follows, we first discuss the cases $p>1$ and $\lm>0$. By Lemma \ref{lem1.1} $\xil=\lm\pl$ is a $C^1$ function of $\lm$ and~$\wl=\frac{d\xil}{d\lm}\in C^{2}_0(\ov{\om}\,)$ satisfies
\begin{align}\label{wl}
\graf{-\Delta \wl =\tl \rlq [\wl]_{\ssl}+\rl\quad \mbox{in}\;\;\om\\ \\
\wl=0 \quad \mbox{on}\;\;\pa\om.
}
\end{align}
Moreover, taking derivative of the constraint~$\ino \left[\all+\xil\right]_+^p=1$, we have
\begin{align}\label{malfa.1}
\frac{d\all}{d\lm}=-\Abracket{\wl}_{\ssl}.
\end{align}

Testing \rife{wl} by $\wl$ we find that
\begin{align}\label{alm1}
\all\ino \rlq \wl+\ino \rlq \wl\xil=& \ino \rl \wl=\ino |\nabla \wl|^2-\tl \ino \rlq[\wl]^2_{\ssl}\\
\geq &  \sg_{1}(\all,\pl)\ino \rlq[\wl]^2_{\ssl}.
\end{align}
On the other side, multiplying the equation in \eqref{ul.1.1} by $\wl$ and integrating by parts
we also find that,
\begin{align}\label{030520.1}
\tl \ino \rlq [\wl]_{\ssl}\xil+\ino \rl \xil=\lm \ino \rl \wl=\lm \all \ino \rlq \wl+\lm \ino \rlq \wl{\xil},
\end{align}
which is equivalent to
$$
\ino\rlq \wl\xil=\frac{1}{p-1}(p\Abracket{\xil}_{\ssl}+\all)\ino\rlq\wl -\frac{1}{\lm(p-1)}\ino \rl \xil.
$$
Substituting this expression of $\ino\rlq \wl\xil$ in \eqref{alm1} we obtain,
$$
\tl \left(\all + {\Abracket{\xil}_{\ssl}}\right)\ino \rlq \wl-\ino \rl \xil
\geq \lm(p-1)\sg_{1}(\all,\pl)\ino \rlq[\wl]^2_{\ssl}.
$$
Since
$$
\all + {\Abracket{\xil}_{\ssl}}=\frac{\ino \rlq(\all + {\xil})}{\ml}=\frac{\ino \rl}{\ml}=\frac{1}{\ml},
$$
we thus have
$$
\tl\Abracket{\wl}_{\ssl}=\frac{\tl}{\ml} \ino \rlq \wl\geq {\lm}{(p-1)}\sg_{1}(\all,\pl)\ino \rlq[\wl]^2_{\ssl}+\ino \rl \xil.
$$

Since~$\sg_1>0$ by assumption and $\xil=\lm\pl\geq 0$ by the maximum principle, so~$\Abracket{\wl}_{\lm}>0$, hence in~\eqref{malfa.1}
we have~$\frac{d\all}{d\lm}<0$.
For $p=1$ and $\lm>0$ after a straightforward evaluation we deduce from \eqref{030520.1} that $\Abracket{\wl}_{\ssl}=2\el$ and then the conclusion follows from \eqref{malfa.1}.
For $p\geq1$ and $\lm=0$ we have $\wl=\psi_{\sscp 0}$ and $\Abracket{\wl}_{\ssl}=2E_{0}$ and the conclusion follows again from \rife{malfa.1}.
\end{proof}

At this point we can prove Theorem \ref{thm1} and Theorem \ref{thm:gel}.

\begin{proof}[Proof of Theorem~\ref{thm1}]
     We see from Proposition \ref{preig} that $\sg_1(\all,\pl)>0$ for any solution of $\prl$ for any~$0\leq \lm\leq \lm_0(\om,p)$, whence $\lm_*(\om,p)>\lm_0(\om,p)$.\\
    By Lemmas \ref{lemE1} and \ref{lem1.1} we can continue a $C^1$ curve of solutions of $\prl$ starting at $\lm=-\eps<0$ for some small $\eps>0$, for any $\lm<\lm_*(\om,p)$, which we denote by $\mathcal{G}_\eps(\om)$.
    If any other solution would exist in $(0,\lm_*(\om,p))$ which was not on $\mathcal{G}_\eps(\om)$, then by Lemma \ref{lemE1} and Lemmas \ref{lem1.1} we could continue from that solution a~$C^1$ branch of solutions backward up to $\lm=0$.
    Because of Lemma \ref{lem1.1}, any such curve could not meet $\mathcal{G}_\eps(\om)$ at any~$0<\lm<\lm_0(\om,p)$, while
     since there is exactly one solution of $\prl$ for $\lm=0$, it should also necessarily meet $\mathcal{G}_\eps(\om)$ at $\lm=0$, which then should be a bifurcation point.
    This is in contradiction with Lemma \ref{lem1.1} and proves the uniqueness part. Let us denote by $\mathcal{G}_*(\om)$ the unique branch of solutions of $\prl$ for $\lm\in[0,\lm_*(\om,p))$ determined in this way, then by the uniqueness we have $\mathcal{G}_0(\om)\subset\mathcal{G}_*(\om)$ unless $p=1$, as claimed.\\
    The monotonicity of $\all$ and $\el$ follows from Propositions \ref{pr-enrg} and \ref{pr3.2.best}.\\
    Next we prove the claim about the inequality $\lm_+(\om,p)\geq \lm_0(\om,p)$.
    It is well known that (see \cite{Temam1977remarks}) for $p=1$ we have $\lm_+(\om,1)=\lm_0(\om,1)=\Lambda(\om,2)$, which is just the first eigenvalue
    of the Laplace operator on $\om$ with Dirichlet boundary conditions.\\
    Thus we are left to prove that  $\lm_+(\om,p)>\lm_0(\om,p)$ as far as $p\in(1,p_{_N})$.
    For the sake of simplicity from now on we set $\lm_+=\lm_+(\om,p)$. Recalling that (see \cite{BJW2024sharp}) $\lm_+$ is positive and well defined, we argue by contradiction and assume that $\lm_+\leq \lm_0(\om,p)$. Since $\lm_*(\om,p)>\lm_0(\om,p)$, by uniqueness and Lemma \ref{lemE1} we can pass to the limit along $\mathcal{G}_*(\om)$ as $\lm\to (\lm_+)^{-}$ and come up with at least one solution $(\al_+,\psi_+)$ of $\prl$ for $\lm=\lm_+$.
    Clearly $\al_+\leq 0$ and then $u_+=\lm_+\psi_+$ would be a solution of
\begin{align}\label{us}
\graf{-\Delta u_+ = \lm_+ [\al_+ + u_+]_+^p \quad \mbox{in} \;\;\om\\ \\
\bigintss\limits_{\om}[\al_+ + u_+]_+^p=1\\ \\
u_+>0 \;\;\mbox{in}\;\;\om, \quad u_+=0 \;\; \mbox{on}\;\;\pa\om.
}
\end{align}
Now the linearization of \eqref{us} where one just disregards the integral constraint takes the form
\begin{align}\label{phis}
\graf{-\Delta \phi = \lm_+ p[\al_+ + u_+]_+^{p-1}\phi  \quad \mbox{in} \;\;\om\\ \\
\phi=0 \quad \mbox{on}\;\;\pa\om,
}
\end{align}
and it is readily seen that $u_+$ is a positive strict subsolution of \eqref{phis}, whence the first
eigenvalue of \rife{phis} is negative. In particular we infer that
$$
0>
\inf\limits_{w\in H^{1}_0(\om)\setminus\{0\}}
\dfrac{\ino |\nabla w|^2}{\ino [\al_+ + \lm_+\psi_+]_+^{p-1} w^2}-\lm_+ p=
$$
$$
\inf\limits_{w\in H^{1}_0(\om)\setminus\{0\}}
\dfrac{\ino |\nabla w|^2-\lm_+ p \ino [\al_+ + \lm_+\psi_+]_+^{p-1} w^2  }{\ino [\al_+ + \lm_+\psi_+]_+^{p-1} w^2}=
\nu_{1}(\al_+,\psi_+),
$$
which contradicts Proposition \ref{preig}.
\end{proof}

\begin{proof}[Proof of Theorem~\ref{thm:gel}]
     We recall from \cite[Appendix A]{BJW2024sharp} that $\lm_+(\om,p)<+\ii$.
     To simplify the exposition we denote $\lm_*=\lm_*(\om,p)$ and $\lm_+=\lm_+(\om,p)$ and use for the time being the temporary notation~$\lm_{+\wedge *}\equiv\min\braces{\lm_*,\lm_+}$.

     In view of Theorem \ref{thm1}, the set of solutions of $\prl$ for $\lm<\lm_{+\wedge*}$ is a $C^1$ curve $\lm\mapsto(\all,\pl)$ which we denote by ${\mathcal{G}}_{+\wedge*}$, where in particular $\frac{d \el}{d\lm}>0$, $\frac{d \all}{d\lm}<0$ along ${\mathcal{G}}_{+\wedge*}$.
     Clearly $\all\searrow 0^+$ as far as $\lm\nearrow \lm_+$ and $\lm_+<\lm_*$.
     As solutions are positive for~$\lm<\lm_+$, we have $[\al+\lm\psi]_+^p\equiv (\al+\lm\psi)^p$ which implies that the map $F$ defined in \eqref{eF} is jointly real analytic as a function of $(\lm, \al,\psi)$, as far as $\lm<\lm_+$.
     As a consequence, we can use the analytic implicit function theorem (see e.g.~\cite[Theorem 4.5.4]{BuffoniToland2003analytic})
     in the proof of Lemma \ref{lem1.1} to deduce that in fact $\lm\mapsto(\all,\pl)$ is real analytic.

     The variables~$\vla=\frac{\lm}{\all}\pl$ and $\mu=\mul=\lm\all^{p-1}$ are also real analytic, and~$\mul\to 0^+$ as $\lm\nearrow \lm_+$
     since $\all\searrow 0^+$ as $\lm\nearrow \lm_+$. Actually $\frac{d\mul}{d\lm}<0$ as $\lm\nearrow \lm_+$ and $\lm_+< \lm_*$. In fact we have
     $$
     \frac{d\mul}{d\lm}=\all^{p-2}(\all+\lm(p-1) \frac{d\all}{d\lm})\leq \all^{p-2}(o(1)-2\lm(p-1) \el),
     $$
     for $\lm$ close enough to $\lm_+$, where we used \eqref{Entropyd} in Proposition \ref{pro:lat.1} below.

      In particular $\vla$ solves $\pqm$ for $\mu=\mul$ and it is readily seen that $\el=\frac12 \ino\rl\pl$ takes the form
      $$
        \el=\frac12\left(\frac{\all}{\lm}\right)^2\mul\ino (1+\vla)^p\vla,
        $$
    and is real analytic as well.
    However because of Lemma \ref{lemE1} we see that $\el=\frac12 \ino\rl\pl$ is uniformly bounded for $\lm\leq \lm_+$, whence $\el\nearrow E_{\ii}$ for some $E_{\ii}\in (E_0,+\ii)$, as $\lm\nearrow \lm_+$.

    We are just left to prove that $\|\vla\|_{\ii}\to +\ii$ as $\lm\nearrow \lm_+$.
    Since $\vla=\frac{\lm}{\all}\pl$ and $\all\to 0^+$ as $\lm\nearrow \lm_+$, it is enough to prove that $\liminf_{\lm\to \lm_+^-}\|\pl\|_\ii>0$.
    If not there exist sequences $\lm_n\nearrow \lm_+$,~$\al_n\searrow 0^+$ and $\psi_n$ such that $\|\psi_n\|_\ii\to 0$.
    Again by Lemma \ref{lemE1} and standard elliptic estimates, possibly passing to a subsequence, $\psi_n$ converges in $C^2_0(\ov{\om})$ to a solution $\psi$ of
    \begin{equation}\label{psiplus}
      \graf{-\Delta \psi ={\lm_+^p}\psi^p\quad \mbox{in}\;\;\om\\
    \bigintss\limits_{\om} {\dsp \lm_+^p\psi^p}=1\\
    \psi=0 \quad \mbox{on}\;\;\pa\om
    }
    \end{equation}
    which is impossible since we would find that $1=\int_{\om} {\dsp \lm_+^p\psi^p}=\lim\limits_{n\to +\ii}\int_{\om} {\dsp (\al_n+\lm_n\psi_n)^p}=0$.
\end{proof}

For later references we state the following formula of independent interest.

\begin{prop}\label{pro:lat.1}

    Let $N\geq 2$, $p\in [1,p_N)$ and assume that $(\all,\pl)$ is differentiable at some $\lambda>0$.
    Then, 
    $$
        \frac{\dd}{\dd \lm} (\all +2\lm \el)=\parenthesis{1+\frac{1}{p}}\lm \frac{\dd \el}{\dd\lm}.
    $$

    In particular we have,
     \begin{align}\label{Entropyd}
        \frac{\dd\all}{\dd\lm}+2\el = - \parenthesis{1-\frac{1}{p}}\lm\frac{\dd \el}{\dd \lm}.
    \end{align}
\end{prop}

\begin{proof}
Using the notation from Propositions~\ref{pr-enrg} and~\ref{pr3.2.best}, we have that,
\begin{align}
    \el=\frac{1}{2\lm}\ino [\all+\xil]_+^p\xil=\frac{1}{2\lm}\ino \rl \xil.
\end{align}
Recall that $\wl=\frac{d\xil}{d\lm}\in C^{2,r}_0(\,\ov{\om}\,)$ for $p>1$, $\wl\in W^{2,t}_0({\om})$ for some $t>N$ for $p=1$ and satisfies~\eqref{malfa.1}. 
Then we have,
\begin{align}
    \frac{d \el}{d\lm}
    =&-\frac{\el}{\lm}+\frac{p}{2\lm}\ino \rlq [\wl]_{\ssl} \xil+\frac{1}{2\lm}\ino\rl \wl\\
=&-\frac{\el}{\lm}+\frac{p}{2\lm}\ino \rl [\wl]_{\ssl}
   +\frac{1}{2\lm}\ino\rl \wl \\
=&-\frac{\el}{\lm}+\frac{p}{2\lm}\ino \rl [\wl]_{\ssl}+\frac{1}{2\lm}\ino\rl [\wl]_{\ssl}
  +\frac{1}{2\lm}\Abracket{\wl}_{\ssl}\\
=&-\frac{\el}{\lm}+\frac{p+1}{2\lm}\ino \rl [\wl]_{\ssl}+\frac{1}{2\lm}\Abracket{\wl}_{\ssl}.
\end{align}
This equality can be reformulated as follows,
\begin{align}
\frac{d}{d \lm} (\all +2\lm \el)
=&-\Abracket{\wl}_{\ssl}+2\el+2\lm \frac{d \el}{d\lm}\\
=&({p+1})\ino \rl [\wl]_{\ssl}
  =({p+1})\ino \rlq (\all+\xil) [\wl]_{\ssl} \\
=&({p+1})\ino \rlq [\wl]_{\ssl}\xil.
\end{align}
On the other side, by testing~\eqref{wl} against~$\xil$, we see that
$$
\lm p \ino \rlq [\wl]_{\ssl}\xil
=\ino\rl (\lm \wl-\xil)
=\lm^2\ino \rl \frac{\wl-\pl}{\lm}
=\lm^2 \ino\rl \frac{d\pl}{d\lm}
=\lm^2\frac{d\el}{d\lm},
$$
where we used the first equality in \eqref{8.12.11}.
As a consequence we have,
$$
\frac{d}{d \lm} (\all +2\lm \el)
=({p+1})\ino \rlq [\wl]_{\ssl} \ul
=\frac{p+1}{p}\lm \frac{d\el}{d\lm}.
$$
\end{proof}
As a corollary of \eqref{Entropyd}, we see that the derivatives of~$\all$ and~$\el$ have to diverge simultaneously, whenever this could be the case.


\section{The solution branch on the ball of unit volume}\label{sect:calc in ball}

In the proof of Theorem \ref{thm:lm*} we will need a refined knowledge of the regularity properties of $\all$, which is why we analyze here the solution curve of~$\prl$ on~$ \mathbb{D}_N=B_{R_N}$, with unit volume~$|B_{R_N}|=1$.\\

Remark that for $\om=\mathbb{D}_2$, the uniqueness for the Grad-Shafranov type problem equivalent to $\prl$ is well-known and dates back to \cite{BandleSperb1983qualitative}, while for $N\geq 3$ has been recently proved in \cite{BJW2024sharp}.\\
For~$N\geq 2$, $p\in (1,p_{_N})$ and~$I>0$ let us define the Grad-Shafranov type  free boundary problem (see e.g. \cite{BJW2024sharp, Berestycki1980free})
$$
\graf{-\Delta {\rm v} =[{\rm v}]_+^p\quad \mbox{in}\;\;\mathbb{D}_N\vspace{0.5em} \\
\quad\;\; {\rm v}=\gamma \qquad \mbox{on}\;\;\pa\mathbb{D}_N\vspace{0.5em} \\
\bigints\limits_{\mathbb{D}_N} {\dsp [{\rm v}]_+^p}=I
}\qquad \fbi
$$
where the unknown is $(\gamma,{\rm v})\in \R \times
C^{2,\beta}(\ov{\mathbb{D}_N})$.
Let~$I=\lambda^{\frac{p}{p-1}}$, then a solution~$(\gamma, {\rm v} )$ of~$\fbi$ corresponds to a solution~$(\alpha,\psi)\in \R\times C^{2,\beta}(\overline{\mathbb{D}_N})$ of~$\prl$ where
\begin{align}\label{eq:change variable}
   \begin{cases}
    \gamma= \lambda^{\frac{1}{p-1}}\alpha, \vspace{2mm}\\
    {\rm v} = \lambda^{\frac{1}{p-1}}(\alpha+\lambda\psi).
   \end{cases}
  \end{align}
The solutions for~$\fbi$ can be given explicitly for any~$I>0$.
Indeed, let~$u_0=u_{0,N}$ be the~$N$-dimensional Lane-Emden solution in the unit ball~$B_1(0)\subset\R^N$ as defined in the introduction, namely
\begin{align}
    \graf{-\Delta u_0 =u_0^p\quad \mbox{in}\;\;B_1\\
u_0>0 \quad \mbox{in}\;\;B_1\\
u_0=0 \quad \mbox{on}\;\;\pa B_1.
}
\end{align}
As far as no ambiguity could arise, we will omit the index~$N$ and simply write~$u_0=u_{0,N}$.
By~\cite{GNN1979symmetry} we know that~$u_0$ is radial and radially decreasing.
Moreover since~$p<p_{_N}$ is subcritical, the solution is smooth.

\subsection{Planar case \texorpdfstring{$N=2$}{N=2}}

In this case~$R_2=\frac{1}{\sqrt{\pi}}$, and the solution~${\rm v}$ of~$\fbi$ is radial, of class $C^2$ and can be explicitly evaluated in terms of $\phi$ as follows:
\begin{itemize}
    \item as far as ~$\gamma={\rm v}|_{\partial \mathbb{D}_2} \geq 0$, we have,
            \begin{align}
                {\rm v}(x) = \frac{1}{R^{\frac{2}{p-1}}}u_0(\frac{x}{R}),
            \end{align}
            for some~$R\in [R_2,+\infty)$;
    \item as far as~$\gamma<0$, we have,
             \begin{align}
                {\rm v}(x)=
                \begin{cases}
                    \frac{1}{R^{\frac{2}{p-1}}}u_0(\frac{x}{R}), & 0\leq |x|\leq R, \vspace{0.5em} \\
                    A\log\frac{|x|}{R} + B, & R< |x| \leq R_{2},
                \end{cases}
            \end{align}
            for some~$R\in (0,R_2)$ and~$A,B\in\R$.
\end{itemize}

In the non negative boundary value case,~${\rm v}$ is smooth, while in the negative boundary value case, the~$C^2$ regularity requires that
\begin{align}
    A=\frac{u_0'(1)}{R^{\frac{2}{p-1}}}<0 \quad\mbox{ and }\quad B=0.
\end{align}
The value of~$R>0$ is uniquely determined by the integral constraints:
\begin{align}
    I =  \int_{\mathbb{D}_N} [{\rm v}]_+^p
    =\begin{cases}
         \int_{B_{R}} \frac{1}{R^{\frac{2p}{p-1}}} u_0(\frac{x}{R})^p
        =\frac{1}{R^{\frac{2}{p-1}}}\int_{B_1} u_0(y)^p\dd{y}, & \mbox{ if } \gamma<0, \mbox{ i.e. } R < R_2, \\
        \int_{B_{R_2}} \frac{1}{R^{\frac{2p}{p-1}}} u_0(\frac{x}{R})^p
                =\frac{1}{R^{\frac{2}{p-1}}}\int_{B_{R_2/ R}} u_0(y)^p\dd{y}, & \mbox{ if } \gamma\geq 0, \mbox{ i.e. } R\geq R_2. \vspace{0.5em}
    \end{cases}
\end{align}
To see that for each~$I$ there is a unique~$R$ satisfying the integral constraint, we note that the function
\begin{align}
    R\mapsto \mathscr{I}(R)\equiv
        \begin{cases}
            \frac{1}{R^{\frac{2}{p-1}}} \int_{B_{1}} u_0(y)^p\dd{y}, & \mbox{ if } R<R_2, \\
            \frac{1}{R^{\frac{2}{p-1}}} \int_{B_{R_2/ R}} u_0(y)^p\dd{y}, & \mbox{ if } R\geq R_2,
        \end{cases}
\end{align}
is continuous and monotonic decreasing, with
    \begin{align}
        \lim_{R\searrow 0} \mathscr{I}(R)= +\infty, & &
        \mathscr{I}(R_2)=\frac{1}{R_2^{\frac{2}{p-1}}} \int_{B_1} u_0(y)^p\dd{y} = \pi^{p-1}\int_{B_1} u_0(y)^p\dd{y}, & &
        \lim_{R\nearrow +\infty} \mathscr{I}(R)=0^+.
    \end{align}
Hence for each~$I>0$ there is a unique~$R>0$ such that~$\mathscr{I}(R)=I$ given by the inverse function~$\mathscr{I}^{-1}(I)$.

    \begin{lemma}
        The function~$\mathscr{I}(R)$ is~$C^2$ in~$R$.
    \end{lemma}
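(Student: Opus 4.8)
The plan is to reduce the claim to the regularity of one radial integral. Write $\mathscr{I}(R)=R^{-\frac{2}{p-1}}J(R)$, where $J(R):=\int_{B_{\min\{1,\,R_2/R\}}}u_0^p\,\dd y$; since $R\mapsto R^{-\frac{2}{p-1}}$ is $C^\infty$ on $(0,+\infty)$, it suffices to prove that $J\in C^2(0,+\infty)$. For $R\le R_2$ one has $\min\{1,R_2/R\}=1$, so $J\equiv\int_{B_1}u_0^p=I_p$ is constant and in particular $C^\infty$ there. For $R\ge R_2$, using that $u_0$ is radial, $J(R)=\int_0^{R_2/R}2\pi r\,u_0(r)^p\,\dd r$; since $u_0$ is smooth and strictly positive on $[0,\rho]$ for every $\rho<1$, the fundamental theorem of calculus and the chain rule give $J\in C^\infty(R_2,+\infty)$, with
$$
J'(R)=-2\pi\frac{R_2^2}{R^3}\,u_0\!\big(\tfrac{R_2}{R}\big)^p,
\qquad
J''(R)=2\pi\frac{R_2^2}{R^4}\Big(3\,u_0\!\big(\tfrac{R_2}{R}\big)^p+p\,\tfrac{R_2}{R}\,u_0\!\big(\tfrac{R_2}{R}\big)^{p-1}\,u_0'\!\big(\tfrac{R_2}{R}\big)\Big).
$$

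Next I would glue the two branches at $R=R_2$. The values agree, since $J(R_2^-)=I_p=\int_0^1 2\pi r\,u_0(r)^p\,\dd r=J(R_2^+)$. For the derivatives, recall that $u_0\in C^1(\ov{B_1})$ with $u_0\equiv 0$ on $\partial B_1$ and that $p>1$, so $p-1>0$; hence, as $R\searrow R_2$ (equivalently $R_2/R\nearrow 1$), both $u_0(R_2/R)^p\to 0$ and $u_0(R_2/R)^{p-1}u_0'(R_2/R)\to 0$, so that $J'(R)\to 0$ and $J''(R)\to 0$. These limits coincide with the one-sided derivatives of the constant branch $J\equiv I_p$ on $(0,R_2]$, which are all zero. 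Therefore $J$, $J'$ and $J''$ extend continuously across $R=R_2$, whence $J\in C^2(0,+\infty)$ and consequently $\mathscr{I}=R^{-\frac{2}{p-1}}J\in C^2(0,+\infty)$, with $\mathscr{I}'(R_2)$ and $\mathscr{I}''(R_2)$ obtained simply by differentiating $R^{-\frac{2}{p-1}}I_p$.

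The only delicate point — the main, though elementary, obstacle — is precisely this matching at $R=R_2$, the radius at which the free boundary $\partial\{{\rm v}>0\}$ touches $\partial\mathbb{D}_2$. The structural reason it works is that the ``missing cap'' $I_p-J(R)=\int_{B_1\setminus B_{R_2/R}}u_0^p$ vanishes like $(1-R_2/R)^{p+1}$ as $R\searrow R_2$ — since $u_0(r)\sim |u_0'(1)|\,(1-r)$ near $r=1$ by Hopf's lemma — and the exponent $p+1>2$ makes this $o\big((R-R_2)^2\big)$, which forces the first two derivatives to match at $R_2$. Note that $p>1$ is used in an essential way: for $p=1$ the same computation yields only $J\in C^1$, because then $u_0(R_2/R)^{p-1}u_0'(R_2/R)\to u_0'(1)\neq 0$ as $R\searrow R_2$.
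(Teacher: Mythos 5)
Your proof is correct and follows essentially the same route as the paper's: differentiate the formula twice on $(R_2,+\infty)$ and observe that the boundary terms, which involve $u_0(R_2/R)^p$ and $u_0(R_2/R)^{p-1}u_0'(R_2/R)$, vanish as $R\searrow R_2$ because $u_0(1)=0$ and $p>1$, so the one-sided derivatives match those of the branch on $(0,R_2)$. Factoring out the smooth prefactor $R^{-2/(p-1)}$ and working with $J(R)$ is only a cosmetic simplification of the same computation, and your Hopf-lemma remark on why $p>1$ is essential is a nice, correct aside.
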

    \begin{proof}
        In the interval~$(0, R_2)$ it is smooth in~$R$.
        We need to consider the differentiability at~$R_2$ and in the interval~$(R_2, +\infty)$.

        (a) In the interval~$(R_2,+\infty)$, we can differentiate the function~$\mathscr{I}(R)$:
        \begin{align}
            \frac{\dd\mathscr{I}}{\dd R}
            =& -\frac{2}{p-1} \frac{1}{R^{\frac{2}{p-1}+1}} \int_{B_{\frac{R_2}{R}}} u_0(y)^p\dd{y}
             + \frac{1}{R^{\frac{2}{p-1}}} \int_{\partial B_{\frac{R_2}{R}}} u_0(y)^p\dd{s} \parenthesis{ -\frac{R_2}{R^2} } \\
            =& -\frac{2}{p-1}\frac{1}{R^{\frac{2}{p-1}+1}} \int_{B_{\frac{R_2}{R}}} u_0(y)^p\dd{y}
              -\frac{R_2}{R^{\frac{2}{p-1}+2}} \cdot 2\pi \frac{R_2}{R}u_0(\frac{R_2}{R})^p \\
            =&-\frac{2}{p-1}\frac{1}{R^{\frac{2}{p-1}+1}} \int_{B_{\frac{R_2}{R}}} u_0(y)^p\dd{y}
             -\frac{2\pi R_2^2}{ R^{\frac{2}{p-1}+3}} u_0(\frac{R_2}{R})^p<0
        \end{align}
        In particular, at~$R=R_2$, the second summand vanishes and we deduce that,
        \begin{align}
            \frac{\dd}{\dd R}\Big|_{R=R_2+0} \mathscr{I}(R)
            =\frac{\dd}{\dd R}\Big|_{R=R_2-0} \mathscr{I}(R),
        \end{align}
        whence~$\mathscr{I}(R)$ is~$C^1$.

        (b) In the interval~$(R_2,+\infty)$, we can take the second derivative of the function~$\mathscr{I}(R)$:
        \begin{align}
            \frac{\dd^2\mathscr{I}}{\dd R^2}
            =& \frac{2}{p-1}\parenthesis{\frac{2}{p-1}+1} \frac{1}{R^{\frac{2}{p-1}+2}} \int_{B_{\frac{R_2}{R}}} u_0(y)^p\dd{y}
             -\frac{2}{p-1}\frac{1}{R^{\frac{2}{p-1}+1}} \int_{\partial B_{\frac{R_2}{R}}} u_0(y)^p\dd{s} \cdot\parenthesis{-\frac{R_2}{R^2}} \\
             & +\parenthesis{\frac{2}{p-1}+3}\frac{2\pi R_2^2}{R^{\frac{2}{p-1}+4}} u_0(\frac{R_2}{R})^p
              -\frac{2\pi R_2^2 }{ R^{\frac{2}{p-1}+3}} \cdot pu_0(\frac{R_2}{R})^{p-1}u_0'(\frac{R_2}{R})\parenthesis{-\frac{R_2}{R^2}} \\
            =&  \frac{2}{p-1}\parenthesis{\frac{2}{p-1}+1} \frac{1}{R^{\frac{2}{p-1}+2}} \int_{B_{\frac{R_2}{R}}} u_0(y)^p\dd{y}
            +\frac{2}{p-1}\frac{2\pi R_2^2}{R^{\frac{2}{p-1}+4}}u_0(\frac{R_2}{R})^p  \\
             &+\parenthesis{\frac{2}{p-1}+3}\frac{2\pi R_2^2}{R^{\frac{2}{p-1}+4}} u_0(\frac{R_2}{R})^p
             +p\frac{2\pi R_2^3}{R^{\frac{2}{p-1}+5}} u_0(\frac{R_2}{R})^{p-1} u_0'(\frac{R_2}{R})\\
            =& \frac{2}{p-1}\parenthesis{\frac{2}{p-1}+1} \frac{1}{R^{\frac{2}{p-1}+2}} \int_{B_{\frac{R_2}{R}}} u_0(y)^p\dd{y}
             +\parenthesis{\frac{4}{p-1}+3}\frac{2\pi R_2^2}{R^{\frac{2}{p-1}+4}} u_0(\frac{R_2}{R})^p \\
             &+p\frac{2\pi R_2^3}{R^{\frac{2}{p-1}+5}} u_0(\frac{R_2}{R})^{p-1} u_0'(\frac{R_2}{R}).
        \end{align}
        Again, since~$u_0(1)=0$ and~$p>1$, we see that,
        \begin{align}
            \frac{\dd^2}{\dd R^2}\Big|_{R=R_2+0} \mathscr{I}(R)
            =\frac{\dd^2}{\dd R^2}\Big|_{R=R_2-0} \mathscr{I}(R),
        \end{align}
        whence~$\mathscr{I}(R)$ is~$C^2$.
    \end{proof}

    To come up with higher differentiability of~$\mathscr{I}(R)$, we would need that~$p-1>1$, otherwise we miss the differentiability at~$R=R_2$. However, for~$R>R_2$, since~$u_0$ is smooth in~$B_1(0)$, we see that~$\mathscr{I}(R)$ is smooth for~$R>R_2$.

    \

    \begin{cor}
        The function~$R=\mathscr{I}^{-1}(I)$ is~$C^2$.
    \end{cor}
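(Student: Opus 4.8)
The plan is to invoke the inverse function theorem, for which essentially the only thing left to check is that $\mathscr{I}'$ never vanishes on $(0,+\infty)$. By the preceding lemma $\mathscr{I}$ is $C^2$ on $(0,+\infty)$, and by the discussion above it is continuous and strictly decreasing with $\lim_{R\searrow 0}\mathscr{I}(R)=+\infty$ and $\lim_{R\nearrow+\infty}\mathscr{I}(R)=0^+$; hence $\mathscr{I}$ is a $C^2$ bijection of $(0,+\infty)$ onto itself, and $\mathscr{I}^{-1}$ is well defined and continuous.

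First I would verify the non-vanishing of $\mathscr{I}'$ on the open piece $(0,R_2)$: there $\mathscr{I}(R)=R^{-\frac{2}{p-1}}\int_{B_1}u_0^p$, so $\mathscr{I}'(R)=-\tfrac{2}{p-1}R^{-\frac{2}{p-1}-1}\int_{B_1}u_0^p<0$. On $(R_2,+\infty)$ the explicit expression for $\frac{\dd\mathscr{I}}{\dd R}$ computed in part (a) of the lemma is a sum of two strictly negative terms, hence $\mathscr{I}'(R)<0$ there as well. Finally, at the junction $R=R_2$, since $\mathscr{I}$ is $C^1$ the two one-sided derivatives coincide, and their common value equals $-\tfrac{2}{p-1}R_2^{-\frac{2}{p-1}-1}\int_{B_1}u_0^p<0$ (the boundary contribution drops out because $u_0(R_2/R_2)=u_0(1)=0$). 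Thus $\mathscr{I}'(R)<0$ for every $R>0$.

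It then remains to apply the inverse function theorem at $C^2$ regularity: since $\mathscr{I}\in C^2((0,+\infty))$ and $\mathscr{I}'$ is nowhere zero, $\mathscr{I}^{-1}$ is $C^2$ on $(0,+\infty)$, with $(\mathscr{I}^{-1})'(I)=1/\mathscr{I}'\bigl(\mathscr{I}^{-1}(I)\bigr)$ and an analogous formula for the second derivative obtained by one more differentiation.

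There is no genuine obstacle here; the only point that deserves a moment's attention is that $\mathscr{I}$ is merely $C^2$, and not smooth, across the junction $R=R_2$ — which is exactly why the statement claims $C^2$ and no more — so that one must check that $\mathscr{I}'\neq 0$ precisely at $R=R_2$, and not just on the two open subintervals, before the inverse function theorem can be applied across that point.
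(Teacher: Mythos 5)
Your proposal is correct and follows essentially the same route as the paper, which simply notes that $\frac{\dd\mathscr{I}}{\dd R}$ never vanishes so that $\mathscr{I}^{-1}$ inherits the $C^2$ regularity of $\mathscr{I}$ via the inverse function theorem. Your extra verification that the derivative is strictly negative on both pieces and at the junction $R=R_2$ is a more explicit spelling-out of the same argument.
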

    \begin{proof}
        It suffices to note that~$\frac{\dd \mathscr{I}}{\dd R}$ never vanishes, hence~$\mathscr{I}^{-1}$ is as regular as~$\mathscr{I}$ does, whence it is of class~$C^2$.
    \end{proof}

\subsection{The higher dimensional case \texorpdfstring{$N\geq 3$}{N>=3}}

    The same argument works for general dimensions~$N\geq 3$, but for form of the harmonic part which has to be correspondingly modified. We state it here for the sake of completeness.

    Recall that the volume of the unit ball~$B_1(0)\subset\R^N$ is~$|B_1|=\omega_N$ and the volume of the unit sphere is~$|\pa B_1|=N\omega_N$.
    The value of~$R_N>0$ is fixed so that~$|B_{R_N}|=\omega_N R_N^N=1$.

    The unique solution~${\rm v}$, which has to be radial and radially decreasing, takes the form
    \begin{itemize}
        \item for~$\gamma\geq 0$:
                \begin{align}
                    {\rm v}(x)=\frac{1}{R^{\frac{2}{p-1}}} u_0(\frac{x}{R})
                \end{align}
                for some~$R\in [R_N,+\infty)$;
        \item for~$\gamma<0$:
                \begin{align}
                    {\rm v}(x) =
                    \begin{cases}
                        \frac{1}{R^{\frac{2}{p-1}}} u_0(\frac{x}{R}), & 0\leq |x|\leq R, \vspace{0.2em} \\
                        A\parenthesis{\frac{1}{|x|^{N-2}} -\frac{1}{R^{N-2}}}+B, & R\leq |x| \leq R_N,
                    \end{cases}
                \end{align}
                for some~$R\in(0,R_N)$ and~$A,B\in\R$.
                The~$C^2$ regularity of~${\rm v}$ requires that,
                \begin{align}
                    A=\frac{-u_0'(1)}{N-2}\frac{1}{R^{\frac{2}{p-1} -N+2 }}>0 \quad \mbox{ and } \quad B=0.
                \end{align}
    \end{itemize}
    Again the parameter~$R>0$ will be determined uniquely by the integral constraints as follows,
    \begin{align}
        R\mapsto \mathscr{I}(R) =
        \begin{cases}
            \frac{1}{R^{\frac{2}{p-1} -N+2 } } \int_{B_1} u_0(y)^p\dd{y}, & \mbox{ if } R < R_N, \vspace{0.4em }\\
            \frac{1}{R^{\frac{2}{p-1} -N+2} } \int_{B_{\frac{R_N}{R}}} u_0(y)^p\dd{y}, & \mbox{ if } R\geq R_N.
        \end{cases}
    \end{align}
    \begin{lemma}
        The function~$\mathscr{I}(R)$ is~$C^2$ and monotonic strictly decreasing, with the inverse function~$\mathscr{I}^{-1}$ also of class~$C^2$.
    \end{lemma}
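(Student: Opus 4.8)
The plan is to repeat \emph{verbatim} the argument of the planar case, the only change being the explicit form of the harmonic tail of~${\rm v}$ outside~$B_R$, which for~$N\geq 3$ decays like~$|x|^{2-N}$ rather than logarithmically. Set $\beta\coloneqq \frac{2}{p-1}-(N-2)$; the subcriticality assumption $p<p_{_N}=\frac{N}{N-2}$ is exactly equivalent to $\beta>0$, and this positivity is what drives both the strict monotonicity of~$\mathscr{I}$ and the fact that $\lim_{R\searrow 0}\mathscr{I}(R)=+\infty$.

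On the interval~$(0,R_N)$ one simply has $\mathscr{I}(R)=R^{-\beta}\int_{B_1}u_0^p$, which is smooth and, since $\beta>0$, strictly decreasing, with $\mathscr{I}(R)\to+\infty$ as $R\searrow 0$ and one-sided value $\mathscr{I}(R_N^-)=R_N^{-\beta}\int_{B_1}u_0^p$. On~$(R_N,+\infty)$ one has $\mathscr{I}(R)=R^{-\beta}\int_{B_{R_N/R}}u_0^p$, and differentiating, the contribution of the moving domain~$B_{R_N/R}$ produces a boundary term carrying the factor~$u_0(R_N/R)^p$; I would obtain
\begin{align}
    \frac{\dd\mathscr{I}}{\dd R}
    = -\beta\, R^{-\beta-1}\int_{B_{R_N/R}}u_0^p
      - N\omega_N R_N^N\, R^{-\beta-N-1}\, u_0\!\parenthesis{\tfrac{R_N}{R}}^p < 0 ,
\end{align}
and, after a second differentiation, a formula for~$\frac{\dd^2\mathscr{I}}{\dd R^2}$ in which every summand other than~$\beta(\beta+1)R^{-\beta-2}\int_{B_{R_N/R}}u_0^p$ carries a factor~$u_0(R_N/R)^p$ or $u_0(R_N/R)^{p-1}u_0'(R_N/R)$.

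The crucial point, exactly as for~$N=2$, is the matching at~$R=R_N$: since $u_0(1)=0$ and $p>1$ (so that $p-1>0$), every such boundary term vanishes at~$R=R_N$, leaving $\frac{\dd\mathscr{I}}{\dd R}(R_N^+)=-\beta R_N^{-\beta-1}\int_{B_1}u_0^p$ and $\frac{\dd^2\mathscr{I}}{\dd R^2}(R_N^+)=\beta(\beta+1)R_N^{-\beta-2}\int_{B_1}u_0^p$, which coincide with the one-sided derivatives computed from the branch~$R<R_N$; hence $\mathscr{I}\in C^2(0,+\infty)$. Since moreover $\mathscr{I}$ is strictly decreasing on all of~$(0,+\infty)$ and $\frac{\dd\mathscr{I}}{\dd R}$ never vanishes, the inverse function theorem gives that $\mathscr{I}^{-1}$ is of class~$C^2$ as well. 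The only real chore is the bookkeeping of the constants ($\omega_N$, powers of~$R_N$ and of~$R$) in the two differentiations, and there is no new analytic difficulty relative to~$N=2$; as already remarked there, one should not expect $C^3$ regularity unless $p>2$.
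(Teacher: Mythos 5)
Your proposal is correct and follows essentially the same route as the paper: smoothness on each of $(0,R_N)$ and $(R_N,+\infty)$, explicit computation of the first and second derivatives on $(R_N,+\infty)$, and matching at $R=R_N$ using $u_0(1)=0$ together with $p>1$ so that all boundary terms vanish, with strict negativity of $\mathscr{I}'$ (driven by $\beta=\frac{2}{p-1}-(N-2)>0$, i.e.\ subcriticality) giving the $C^2$ inverse. Your $\beta$-notation is just a compact rewriting of the paper's factor $\frac{N-2}{p-1}(p_{_N}-p)$, and your derivative formulas agree with the paper's.
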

    \begin{proof}
        Note that
        \begin{align}
            \frac{2}{p-1}-N+2= \frac{N-2}{p-1}\parenthesis{p_{_N} -p}>0, \quad \mbox{ since } p< p_{_N},
        \end{align}
        whence~$\mathscr{I}(R)$ is monotonic decreasing.
        Also the function~$\mathscr{I}$ is readily seen to be smooth in~$(0,R_N)$ and in~$(R_N,+\infty)$ and
        we are just left to verify that it is twice differentiable at~$R_N$.
        This can be done as follows.
        First consider the derivative in~$(R_N,+\infty)$:
        \begin{align}
            \frac{\dd\mathscr{I}}{\dd R}
                =& - \frac{N-2}{p-1}(p_{_N}-p) \frac{1}{R^{\frac{2}{p-1}-N+3}}\int_{B_{\frac{R_N}{R}}} u_0(y)^p\dd{y}
                +\frac{1}{R^{\frac{2}{p-1} -N+2}} \int_{\partial B_{\frac{R_N}{R}}} u_0(y)^p\dd{s} \cdot \parenthesis{-\frac{R_N}{R^2}} \\
                =&- \frac{N-2}{p-1}(p_{_N}-p) \frac{1}{R^{\frac{2}{p-1}-N+3}}\int_{B_{\frac{R_N}{R}}} u_0(y)^p\dd{y}
                -N\omega_N\frac{ R_N^N}{R^{\frac{2}{p-1}+3} } u_0(\frac{R_N}{R})^{p},
        \end{align}
         which is strictly negative, and observe that at~$R=R_N$ the second term vanishes in view of~$u_0(1)=0$.
            We thus see that
            \begin{align}
                \frac{\dd}{\dd R}\Big|_{R=R_N+0} \mathscr{I}(R)
                = \frac{\dd}{\dd R}\Big|_{R=R_N-0} \mathscr{I}(R)
            \end{align}
            and hence~$\mathscr{I}(R)$ is a~$C^1$ function in~$(0,+\infty)$.
            In particular, the derivative never vanishes, so its inverse function~$R_F(I)$ is also~$C^1$.

            Then consider the second derivative of~$\mathscr{I}(R)$ in~$(R_N,+\infty)$:
            \begin{align}
                \frac{\dd^2\mathscr{I}}{\dd R^2}
                =& \left[\frac{N-2}{p-1}(p_{_N}-p)\right] \left[\frac{N-2}{p-1}(p_{_N}-p) + 1\right] \frac{1}{R^{\frac{2}{p-1}-N+4}} \int_{B_{\frac{R_N}{R}}} u_0(y)^p\dd{y}  \\
                 & + \frac{N-2}{p-1}(p_{_N}-p)N\omega_N \frac{ R_N^N }{ R^{\frac{2}{p-1}+4}} u_0(\frac{R_N}{R})^p \\
                 &+ \parenthesis{\frac{2}{p-1}+3} N\omega_N\frac{ R_N^N }{R^{\frac{2}{p-1}+4} } u_0(\frac{R_N}{R})^p \\
                 &+ p N\omega_N \frac{ {R_N}^{N+1} }{ R^{\frac{2}{p-1}+5}} u_0(\frac{R_N}{R})^{p-1} u_0'(\frac{R_N}{R})  \\
             =&\left[\frac{N-2}{p-1}(p_{_N}-p)\right] \left[\frac{N-2}{p-1}(p_{_N}-p) + 1\right] \frac{1}{R^{\frac{2}{p-1}-N+4}} \int_{B_{\frac{R_N}{R}}} u_0(y)^p\dd{y}  \\
                 & + \parenthesis{\frac{4}{p-1}-N+5}N\omega_N \frac{ R_N^N }{ R^{\frac{2}{p-1}+4}} u_0(\frac{R_N}{R})^p
                 + p N\omega_N \frac{ {R_N}^{N+1} }{ R^{\frac{2}{p-1}+5}} u_0(\frac{R_N}{R})^{p-1} u_0'(\frac{R_N}{R})
            \end{align}
            We see that~$\mathscr{I}(R)$ is~$C^2$ for~$R>R_N$.
            Moreover, since~$p>1$ and~$u_0(1)=0$, we have
            \begin{align}
                 \frac{\dd^2}{\dd R^2}\Big|_{R=R_N+0} \mathscr{I}(R)
                = \frac{\dd^2}{\dd R^2}\Big|_{R=R_N-0} \mathscr{I}(R)
            \end{align}
            whence~$\mathscr{I}\in C^2(0,+\infty)$.
            Consequently the inverse function~$\mathscr{I}^{-1}(I)$ is also~$C^2$ in~$(0,+\infty)$.
    \end{proof}

\subsection{Differentiability of \texorpdfstring{$\all$}{alpha-lambda}}

    By~\eqref{eq:change variable} and the equivalence between~$\fbi$ and~$\prl$, we come up with a refined description of the unique solution on~$\mathbb{D}_N$.
    In particular, for each~$\lambda>0$ there exists a unique solution~$(\all,\pl)$ and our main concern is the differentiability of~$\all$ in~$\lm\in (0,+\infty)$.

    The boundary value of~${\rm v}$ is
    \begin{align}
        \gamma={\rm v}(R_N)=
        \begin{cases}
            \frac{1}{R^{\frac{2}{p-1}}}u_0(\frac{R_N}{R})\geq 0, &  \mbox{ if } R\geq R_N, \vspace{0.5em} \\
            \frac{u_0'(1)}{R^{\frac{2}{p-1}}}\log\frac{R_2}{R}<0,  & \mbox{ if } N=2, \; R<R_2, \vspace{0.5em } \\
            \frac{u_0'(1)}{N-2}\frac{1}{R^{\frac{2}{p-1}}}\parenthesis{1- (\frac{R}{R_N})^{N-2} }<0, & \mbox{ if } N\geq 3, \; R<R_N.
        \end{cases}
    \end{align}
    Thus~$\gamma(R)$ is a~$C^2$ function in~$R\in(0,+\infty)$ and it would be interesting to better understand $\gamma(R)$, a task that we will pursue elsewhere.

   Thus, given~$\lambda>0$, we have~$I=\lambda^{\frac{p}{p-1}}=\lambda^{1+\frac{1}{p-1}}$ and~$R=\mathscr{I}^{-1}(I)=\mathscr{I}^{-1}(\lambda^{1+\frac{1}{p-1}})$, hence
    \begin{align}
        \all=\frac{\gamma}{\lm^{\frac{1}{p-1}}}
        =\frac{1}{\lambda^{\frac{1}{p-1}}}\gamma\parenthesis{ \mathscr{I}^{-1}(\lambda^{1+\frac{1}{p-1}}) }.
    \end{align}
    As a composition of~$C^2$ functions, we conclude that~$\all$ is~$C^2$ in~$\lm\in (0,+\infty)$.

    \

    For later convenience, we will denote
    \begin{align}
         R(\lambda)\coloneqq \mathscr{I}(\lambda^{\frac{p}{p-1}})
    \end{align}
    which is the value of~$R$ corresponding to~$I=\lambda^{\frac{p}{p-1}}>0$.
    Also we denote
    \begin{align}
        I_p=\int_{B_1} u_0(y)^p \dd{y}
    \end{align}
    which should not be confused with the integral constraint value~$I>0$.

    Collecting the above facts, together with some elementary calculations leads to the following

    \begin{prop}
        Let~$N=2$ and~$(\all,\pl)$ be the unique solution of {\rm $\prl$} on~$\mathbb{D}_2$.
        \begin{itemize}
            \item[(i)] $\lm_+(\mathbb{D}_2,p)=\pi^{\frac{1}{p}} I_p^{\frac{p-1}{p}}$.
            \item[(ii)] If~$\lm\in[0,\lm_+(\mathbb{D}_2,p)]$, then~$R(\lm)\geq R_2$ and
                        \begin{align}\label{eq:gamma-alpha-lambda-2}
                            \gamma_{\ssl}=\frac{1}{R(\lm)^{\frac{2}{p-1}}}u_0(\frac{R_2}{R(\lm)}), & &
                            \all=\frac{\gamma_{\ssl}}{\lm^{\frac{p}{p-1}}}
                            =\frac{1}{\lm^{\frac{1}{p-1}} R(\lm)^{\frac{2}{p-1}}}u_0(\frac{R_2}{R(\lm)}).
                        \end{align}
            \item[(iii)] If~$\lm\in (\lm_+(\mathbb{D}_2,p), +\infty)$, then
                        \begin{align}
                            R(\lm)=I_p^{\frac{p-1}{2}} \lm^{-\frac{p}{2}} < R_2,
                        \end{align}
                        \begin{align}
                            \gamma_{\ssl}
                            =&\frac{p-1}{2}u_0'(1)\frac{\lambda^{\frac{p}{p-1}}}{I_p}\log\parenthesis{\frac{R_2^{\frac{2}{p-1}} \lm^{\frac{p}{p-1}}}{I_p}}<0,
                            \\
                            \all
                            =&\frac{\gamma_{\ssl}}{\lm^{\frac{1}{p-1}}}
                            =\frac{p-1}{2}u_0'(1)\frac{\lambda}{I_p} \log\parenthesis{\frac{R_2^{\frac{2}{p-1}} \lm^{\frac{p}{p-1}}}{I_p}}<0.
                        \end{align}
            \item[(iv)] $\all$ is~$C^2$ in~$\lm\in (0,+\infty)$.
        \end{itemize}
    \end{prop}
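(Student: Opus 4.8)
The plan is to deduce all four assertions from the explicit description of the solutions of $\fbi$ established above, transported to $\prl$ through the correspondence \eqref{eq:change variable} with $I=\lm^{\frac{p}{p-1}}$. First I would record the governing dichotomy: for each $\lm>0$ the radius $R(\lm)=\mathscr{I}^{-1}(\lm^{\frac{p}{p-1}})$ is uniquely determined, and since $\mathscr{I}$ is strictly decreasing, $R(\lm)\geq R_2$ holds exactly when $\lm^{\frac{p}{p-1}}\leq \mathscr{I}(R_2)$; in that range the boundary value $\gamma_{\ssl}={\rm v}(R_2)$ is $\geq 0$, and in the complementary range it is $<0$. Since $R_2=\pi^{-1/2}$ and $u_0(1)=0$ forces the two branches of $\mathscr{I}$ to agree at $R_2$, one gets $\mathscr{I}(R_2)=R_2^{-\frac{2}{p-1}}I_p=\pi^{\frac{1}{p-1}}I_p$. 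Because the solution of $\prl$ on $\mathbb{D}_2$ is unique (Theorem B, see also \cite{BandleSperb1983qualitative}), we have $\all>0$ precisely when $\lm^{\frac{p}{p-1}}<\pi^{\frac{1}{p-1}}I_p$; raising to the power $\frac{p-1}{p}$ gives $\lm_+(\mathbb{D}_2,p)=\pi^{\frac1p}I_p^{\frac{p-1}{p}}$, which is~(i).

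For (ii), when $0\leq\lm\leq\lm_+(\mathbb{D}_2,p)$ we are in the regime $R(\lm)\geq R_2$, so ${\rm v}(x)=R(\lm)^{-\frac{2}{p-1}}u_0(x/R(\lm))$ and hence $\gamma_{\ssl}={\rm v}(R_2)=R(\lm)^{-\frac{2}{p-1}}u_0\parenthesis{R_2/R(\lm)}$; dividing $\gamma_{\ssl}$ by $\lm^{\frac{1}{p-1}}$ as dictated by \eqref{eq:change variable} yields the stated expression for $\all$. For (iii), when $\lm>\lm_+(\mathbb{D}_2,p)$ we have $R(\lm)<R_2$, and the constraint $\mathscr{I}(R)=R^{-\frac{2}{p-1}}I_p=\lm^{\frac{p}{p-1}}$ is solved explicitly by $R(\lm)=I_p^{\frac{p-1}{2}}\lm^{-\frac{p}{2}}$. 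The logarithmic (harmonic) part of ${\rm v}$ then gives $\gamma_{\ssl}={\rm v}(R_2)=A\log\frac{R_2}{R(\lm)}=\frac{u_0'(1)}{R(\lm)^{\frac{2}{p-1}}}\log\frac{R_2}{R(\lm)}$; inserting $R(\lm)^{-\frac{2}{p-1}}=\lm^{\frac{p}{p-1}}/I_p$ together with $\log\frac{R_2}{R(\lm)}=\frac{p-1}{2}\log\parenthesis{R_2^{\frac{2}{p-1}}\lm^{\frac{p}{p-1}}/I_p}$ (the latter being nothing but the identity $\parenthesis{R_2/R(\lm)}^{\frac{2}{p-1}}=R_2^{\frac{2}{p-1}}\lm^{\frac{p}{p-1}}/I_p$) produces the displayed formula for $\gamma_{\ssl}$, and then for $\all=\gamma_{\ssl}/\lm^{\frac{1}{p-1}}$. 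The sign is negative because $u_0'(1)<0$, as $u_0$ is positive in $B_1$, radially decreasing and vanishing on $\partial B_1$, while the logarithm is positive since $R(\lm)<R_2$.

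Finally, (iv) is essentially already recorded in the paragraph preceding the proposition: writing $\all=\lm^{-\frac{1}{p-1}}\gamma\parenthesis{\mathscr{I}^{-1}(\lm^{\frac{p}{p-1}})}$, the maps $\lm\mapsto\lm^{\frac{p}{p-1}}$ and $\lm\mapsto\lm^{-\frac{1}{p-1}}$ are smooth on $(0,+\infty)$, $\mathscr{I}^{-1}$ is $C^2$ by the above corollary, and $\gamma(\cdot)$ is $C^2$ in $R$, so $\all$ is $C^2$ on $(0,+\infty)$ as a composition of $C^2$ maps. I expect the only genuinely delicate point in the whole proposition to be this last one, namely the $C^2$ gluing of $\gamma(R)$ across the free-boundary transition $R=R_2$ (equivalently, of $\all$ across $\lm=\lm_+$): on each side of the transition every quantity is manifestly smooth, and the matching of the first and second derivatives rests on the cancellations forced by $u_0(1)=0$ together with $p>1$ — exactly the mechanism already used in the $C^2$ computations for $\mathscr{I}$, and precisely the place where the borderline value $p=1$ must be excluded.
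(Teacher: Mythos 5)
Your proposal is correct and follows essentially the same route as the paper, which proves this proposition simply by collecting the explicit radial description of the solutions of $\fbi$, the change of variables \eqref{eq:change variable} with $I=\lm^{\frac{p}{p-1}}$, the monotonicity and $C^2$ regularity of $\mathscr{I}$ and $\mathscr{I}^{-1}$, and the uniqueness of solutions on the disk. Your identification of the only delicate point (the $C^2$ matching across $R=R_2$, resting on $u_0(1)=0$ and $p>1$) is exactly the mechanism the paper's preceding lemmas address, so nothing is missing.
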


    \begin{prop}
        For~$N\geq 3$, let~$(\all,\pl)$ be the unique solution of~{\rm $\prl$} on~$\mathbb{D}_N$.
        \begin{itemize}
            \item[(i)] $\lm_+(\mathbb{D}_N,p)$ is uniquely determined by
                        \begin{align}
                           \lm_+(\mathbb{D}_N,p)^{\frac{p}{p-1}}= \frac{I_p}{R_N^{\frac{2}{p-1} -N+2 } }.
                        \end{align}
            \item[(ii)] If~$\lm\in[0,\lm_+(\mathbb{D}_N,p)]$, then~$R(\lm)\geq R_N$ and
                        \begin{align}\label{eq:gamma-alpha-lambda-N}
                            \gamma_{\ssl}=\frac{1}{R(\lm)^{\frac{2}{p-1}}}u_0(\frac{R_N}{R(\lm)}), & &
                            \all=\frac{\gamma_{\ssl}}{\lm^{\frac{p}{p-1}}}
                            =\frac{1}{\lm^{\frac{1}{p-1}} R(\lm)^{\frac{2}{p-1}}}u_0(\frac{R_N}{R(\lm)}).
                        \end{align}

            \item[(iii)] If~$\lm\in(\lm_+(\mathbb{D}_N,p),+\infty)$, then
                        \begin{align}
                            R(\lm)=\parenthesis{ I_p \lm^{\frac{p}{p-1}}}^{\frac{1}{ \frac{2}{p-1}-N+2 }} <R_N,
                        \end{align}
                        \begin{align}
                            \gamma_{\ssl}
                            =&\frac{-u_0'(1)}{N-2}\frac{\lm^{\frac{p}{p-1}}}{I_p} \parenthesis{\frac{1}{R_N^{N-2}} -\parenthesis{\frac{\lm^{\frac{p}{p-1}}}{I_p}}^{\frac{N-2}{\frac{2}{p-1}-N+2}}  }
                            <0, \\
                            \all
                            =&\frac{-u_0'(1)}{N-2}\frac{\lm}{I_p} \parenthesis{\frac{1}{R_N^{N-2}} -\parenthesis{ \frac{\lm^{p}}{I_p^{p-1}} }^{\frac{1}{p_{_N}-p}}  }
                            <0.
                        \end{align}
            \item[(iv)] $\all$ is~$C^2$ in~$\lm\in(0,+\infty)$.
        \end{itemize}

    \end{prop}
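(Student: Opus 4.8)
The plan is to read off all four claims from the explicit parametrization of solutions of $\fbi$ on $\mathbb{D}_N$ obtained above, from the change of variables \eqref{eq:change variable} with $I=\lambda^{\frac{p}{p-1}}$, and from the $C^2$ regularity and strict monotonicity of $\mathscr{I}$ (hence of $\mathscr{I}^{-1}$) just established for $N\ge 3$; so the proof amounts to unwinding these ingredients rather than to proving anything genuinely new. First I would record the sign dichotomy. Writing $R=R(\lambda)=\mathscr{I}^{-1}(\lambda^{\frac{p}{p-1}})$, the boundary value of the unique radial solution ${\rm v}$ of $\fbi$ is $\gamma={\rm v}(R_N)$, and $\gamma$ has the same sign as $R-R_N$: indeed for $R\ge R_N$ the solution is the pure Lane--Emden rescaling ${\rm v}(x)=R^{-\frac{2}{p-1}}u_0(x/R)$, so $\gamma=R^{-\frac{2}{p-1}}u_0(R_N/R)\ge 0$ with equality iff $R=R_N$ (using $u_0(1)=0$ and $u_0>0$ in $B_1$); while for $R<R_N$ the $C^2$-matching forces $B=0$ and $A=\frac{-u_0'(1)}{N-2}R^{-(\frac{2}{p-1}-N+2)}>0$, hence $\gamma=A\big(R_N^{-(N-2)}-R^{-(N-2)}\big)<0$. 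Since $\all=\gamma\,\lambda^{-\frac{1}{p-1}}$ by \eqref{eq:change variable}, the sign of $\all$ is that of $R(\lambda)-R_N$.

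Part (i) then follows because, by the Lemma above establishing the strict monotonicity and $C^2$ regularity of $\mathscr{I}$, the map $\mathscr{I}$ is a continuous strictly decreasing bijection of $(0,\infty)$ onto itself (this is where subcriticality enters, through the positive exponent $\frac{2}{p-1}-N+2=\frac{N-2}{p-1}(p_{_N}-p)>0$); hence $R(\lambda)>R_N\iff\lambda^{\frac{p}{p-1}}<\mathscr{I}(R_N)$, and $\mathscr{I}(R_N)=R_N^{-(\frac{2}{p-1}-N+2)}\int_{B_1}u_0^p=I_p\,R_N^{-(\frac{2}{p-1}-N+2)}$. Thus $\all>0$ exactly for $\lambda<\big(I_p\,R_N^{-(\frac{2}{p-1}-N+2)}\big)^{\frac{p-1}{p}}$, and together with the uniqueness granted by Theorem~C and the definition of $\lm_+(\mathbb{D}_N,p)$ this yields $\lm_+(\mathbb{D}_N,p)^{\frac{p}{p-1}}=I_p\,R_N^{-(\frac{2}{p-1}-N+2)}$, consistently with \eqref{lmpN}. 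For (ii): when $\lambda\le\lm_+(\mathbb{D}_N,p)$ one has $\lambda^{\frac{p}{p-1}}\le\mathscr{I}(R_N)$, so $R(\lambda)\ge R_N$, ${\rm v}$ is the Lane--Emden rescaling, and evaluating it at $|x|=R_N$ gives the stated $\gamma_{\ssl}$; dividing by $\lambda^{\frac{1}{p-1}}$ via \eqref{eq:change variable} gives the stated $\all$ (the value $\al_0=1$ being the degenerate limit $\lambda\to 0^+$, $R(\lambda)\to\infty$). For (iii): when $\lambda>\lm_+(\mathbb{D}_N,p)$ we have $R(\lambda)<R_N$, and on the annulus $R<|x|<R_N$ the constraint reads $\mathscr{I}(R)=R^{-(\frac{2}{p-1}-N+2)}I_p=\lambda^{\frac{p}{p-1}}$, which inverts to the stated $R(\lambda)$; substituting $R^{-(\frac{2}{p-1}-N+2)}=\lambda^{\frac{p}{p-1}}/I_p$ and $R^{-(N-2)}=\big(\lambda^{\frac{p}{p-1}}/I_p\big)^{\frac{N-2}{\frac{2}{p-1}-N+2}}$ into $\gamma_{\ssl}=A\big(R_N^{-(N-2)}-R^{-(N-2)}\big)$, and then dividing by $\lambda^{\frac{1}{p-1}}$ (using $\lambda^{\frac{p}{p-1}-\frac{1}{p-1}}=\lambda$), produces the two displayed formulas.

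Finally, (iv) is precisely the composition argument already carried out above: $\all=\lambda^{-\frac{1}{p-1}}\,\gamma\big(\mathscr{I}^{-1}(\lambda^{\frac{p}{p-1}})\big)$, with $\lambda\mapsto\lambda^{-\frac{1}{p-1}}$ and $\lambda\mapsto\lambda^{\frac{p}{p-1}}$ smooth on $(0,\infty)$, $\mathscr{I}^{-1}$ of class $C^2$ by the same Lemma, and $R\mapsto\gamma(R)$ of class $C^2$ by the explicit piecewise formula for $\gamma$ recalled above; hence $\all\in C^2(0,\infty)$. I do not expect a serious obstacle here: once the explicit solution formulas and the regularity of $\mathscr{I}$ are in hand, the whole argument is bookkeeping. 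The only genuinely delicate point — that $\mathscr{I}$, equivalently ${\rm v}$ and $\gamma(R)$, is $C^2$ across the matching radius $R=R_N$, which requires $u_0(1)=0$ together with $p>1$ to cancel the boundary contributions in the first and second $R$-derivatives — was already settled in the Lemma above, and it is the subcritical range $p<p_{_N}$ that makes the exponent $\frac{2}{p-1}-N+2$ positive, so that $\mathscr{I}$ is a strictly monotone $C^2$ bijection with a $C^2$ inverse.
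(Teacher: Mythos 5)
Your proposal is correct and follows essentially the same route as the paper, which states this proposition as a direct consequence of ``collecting the above facts'' — namely the explicit radial solution formulas for $\fbi$, the change of variables \eqref{eq:change variable} with $I=\lm^{\frac{p}{p-1}}$, and the $C^2$ regularity and strict monotonicity of $\mathscr{I}$ and $\mathscr{I}^{-1}$ established in the preceding lemmas; your sign dichotomy via $R(\lm)\gtrless R_N$, the identification $\lm_+^{\frac{p}{p-1}}=\mathscr{I}(R_N)=I_p R_N^{-(\frac{2}{p-1}-N+2)}$, the substitution giving the formulas in (ii)–(iii), and the composition argument for (iv) are exactly the ``elementary calculations'' the paper has in mind.
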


    In the proof of Theorem \ref{thm:lm*} we will use the regularity properties of $\all$ for $\lm\in[0,\lm_+(\mathbb{D}_N,p)]$ which in turn will imply its monotonicity.
    Meanwhile we prove the monotonicity in~$(\lm_+(\mathbb{D}_N,p),+\infty)$ which is of independent interest.
    \begin{prop}\label{prop:monotonicity of alpha-lambda I}
        For~$N\geq 2$ let~$(\all,\pl)$ be the unique solution of {\rm $\prl$} on~$\mathbb{D}_N$. Then for any~$\lm\in (\lm_+(\mathbb{D}_N,p),+\infty)$,
        \begin{align}
            \frac{\dd \all}{\dd\lm}<0.
        \end{align}
    \end{prop}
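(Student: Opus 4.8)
The plan is to bypass the abstract spectral machinery of Sections~\ref{sec3}--\ref{sec3.4} and argue directly from the closed-form expressions for $\all$ recorded in the two preceding propositions, since on $\mathbb{D}_N$ and for $\lm>\lm_+(\mathbb{D}_N,p)$ those propositions already give $\all$ explicitly. I would treat $N=2$ and $N\geq 3$ separately. For $N=2$ I would start from $\all=\frac{p-1}{2}u_0'(1)\frac{\lm}{I_p}\log\!\big(R_2^{\frac{2}{p-1}}\lm^{\frac{p}{p-1}}/I_p\big)$ and rewrite it as $\all=C\,\lm\log(c\,\lm^{\frac{p}{p-1}})$, where $C=\frac{(p-1)u_0'(1)}{2I_p}<0$ (negative because $u_0$ is radially decreasing, so $u_0'(1)<0$) and $c=R_2^{\frac{2}{p-1}}/I_p>0$. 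Differentiating gives $\frac{\dd \all}{\dd\lm}=C\big(\log(c\,\lm^{\frac{p}{p-1}})+\tfrac{p}{p-1}\big)$, and the key observation is that the boundary value $\gamma_{\lm}$ vanishes exactly at $\lm=\lm_+(\mathbb{D}_2,p)$, which forces $c\,\lm_+(\mathbb{D}_2,p)^{\frac{p}{p-1}}=1$; hence for $\lm>\lm_+(\mathbb{D}_2,p)$ the logarithm is strictly positive, and since $\tfrac{p}{p-1}>0$ and $C<0$, we conclude $\frac{\dd \all}{\dd\lm}<0$.

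For $N\geq 3$ I would start from $\all=\frac{-u_0'(1)}{N-2}\frac{\lm}{I_p}\big(R_N^{-(N-2)}-(\lm^{p}/I_p^{p-1})^{\frac{1}{p_{_N}-p}}\big)$ and write it as $\all=D\big(a\lm-b\,\lm^{1+\theta}\big)$ with $D=\frac{-u_0'(1)}{(N-2)I_p}>0$, $a=R_N^{-(N-2)}>0$, $b=I_p^{-\frac{p-1}{p_{_N}-p}}>0$ and $\theta=\frac{p}{p_{_N}-p}>0$ (positive since $p<p_{_N}$). Then $\frac{\dd \all}{\dd\lm}=D\big(a-(1+\theta)b\,\lm^{\theta}\big)$. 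Here the key point is that for $\lm>\lm_+(\mathbb{D}_N,p)$ the preceding proposition already tells us $\all<0$, and since $D\lm>0$ this forces $a-b\,\lm^{\theta}<0$; therefore $\frac{\dd \all}{\dd\lm}=D\big((a-b\,\lm^{\theta})-\theta b\,\lm^{\theta}\big)<0$, both summands being negative.

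I do not expect any serious obstacle: once the explicit formulas from the preceding propositions are in hand, the statement reduces to a one-line sign check in each dimension. The only place requiring a moment of care is pinning down the sign of $\log(c\,\lm^{\frac{p}{p-1}})$ (respectively of $a-b\,\lm^{\theta}$) on the range $\lm>\lm_+(\mathbb{D}_N,p)$; the cleanest route is to note that this quantity vanishes precisely at $\lm_+(\mathbb{D}_N,p)$, because there $\gamma_{\lm}=0$, and is strictly monotone in $\lm$, so it keeps a fixed sign for $\lm>\lm_+(\mathbb{D}_N,p)$ — alternatively, as in the case $N\geq 3$ above, one simply reads the sign off directly from $\all<0$.
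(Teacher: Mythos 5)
Your proposal is correct and follows essentially the same route as the paper: differentiate the explicit formulas for $\all$ on $(\lm_+(\mathbb{D}_N,p),+\infty)$ and check the sign of the resulting bracket, using $u_0'(1)<0$. The only (harmless) variation is how you fix the sign of the bracket — via the vanishing of $\gamma_{\ssl}$ at $\lm_+$ for $N=2$ and via $\all<0$ for $N\geq 3$ — whereas the paper rewrites the bracket in terms of $\lm/\lm_+(\mathbb{D}_N,p)$ and reads off the sign directly; both are equivalent one-line checks.
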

    \begin{proof}
        This can be verified by direct calculation.
        Indeed, for~$N=2$,
        \begin{align}
            \frac{\dd\all}{\dd\lm}
            =& \frac{p-1}{2}\frac{u_0'(1)}{I_p} \frac{\dd}{\dd\lm}\parenthesis{\lm\log\parenthesis{\frac{R_2^{\frac{2}{p-1}} \lm^{\frac{p}{p-1}}}{I_p}} } \\
            =&\frac{p-1}{2}\frac{u_0'(1)}{I_p} \parenthesis{ \log \parenthesis{\frac{R_2^{\frac{2}{p-1}} \lm^{\frac{p}{p-1}}}{I_p}} +\lm\cdot \frac{p}{p-1}\frac{1}{\lm} } \\
            =&\frac{p-1}{2}\frac{u_0'(1)}{I_p} \parenthesis{\frac{p}{p-1} \log\frac{\lm}{\lm_+(\mathbb{D}_2,p)} +\frac{p}{p-1} } \\
            =& \frac{p}{2} \frac{u_0'(1)}{I_p} \parenthesis{ \log\frac{\lm}{\lm_+(\mathbb{D}_2,p)} +1} <0
        \end{align}
        for~$\lm>\lm_+(\mathbb{D}_2,p)$.
        Meanwhile for~$N\geq 3$,
        \begin{align}
            \frac{\dd\all}{\dd\lm}
            =&\frac{-u_0'(1)}{N-2} \frac{1}{I_p} \frac{\dd}{\dd\lm}
            \parenthesis{ \frac{\lm}{R_N^{N-2}}  -  \frac{\lm^{\frac{p}{p_{_N}-p} +1}}{I_p^{ \frac{p-1}{p_{_N}-p} }} } \\
            =&\frac{-u_0'(1)}{(N-2)I_p} \frac{\dd}{\dd\lm}\parenthesis{ \frac{\lm}{R_N^{N-2}} -\frac{\lm^{\frac{p_{_N}}{p_{_N}-p}}}{ I_p^{\frac{p-1}{p_{_N}-p}} } } \\
            =&\frac{-u_0'(1)}{(N-2)I_p} \parenthesis{\frac{1}{R_N^{N-2}} -\frac{p_{_N}}{p_{_N}-p} \frac{\lm^{\frac{p}{p_{_N}-p}}}{I_p^{\frac{p-1}{p_{_N}-p}}}  } \\
            =& \frac{-u_0'(1)}{(N-2)I_p} \frac{p_{_N}}{p_{_N}-p} \frac{1}{I_p^{\frac{p-1}{p_{_N}-p}}} \parenthesis{\frac{p_{_N} }{p}  \parenthesis{\frac{I_p}{R_N^{\frac{2}{p-1}-N+2}}}^{\frac{N-2}{ \frac{2}{p-1}-N+2}}  -(\lm^{\frac{p}{p-1}})^{\frac{N-2}{ \frac{2}{p-1}-N+2 }} } <0
        \end{align}
        for~$\lm>\lm_+(\mathbb{D}_N,p)$.
    \end{proof}


\section{The proof of Theorem \ref{thm:lm*}: $\lm_*(\mathbb{D}_N,p)\geq \lm_+(\mathbb{D}_N,p)$.
}
\label{sec:lm*}

Recall that Section~\ref{sec3.4} mainly deals with the case where~$L_{\ssl}$ is non-degenerate, namely~$0\notin\Spect(L_{\ssl})$.
In this section we need to study the case that some of the eigenvalues of~$L_{\ssl}$ is zero.
Motivated by the analysis in~\cite{BJ2022uniqueness}, we extend the argument to general dimensions.\\

For later purposes we introduce the weighted product,
\begin{align}
    \Abracket{\phi,\psi}_{\ssl}\coloneqq \frac{ \ino \rlq \phi\psi}{ \ino \rlq }, \qquad \forall \phi,\psi\in L^2(\Omega).
\end{align}
Note that~$\rlq$ is continuous, hence this weighted inner product is well-defined on~$L^2(\om)$.

\

The main ingredient is the following bending lemma of Crandall-Rabinowitz type,
\begin{prop}[{\cite[Prop. 2.7.]{BJ2022uniqueness}}] \label{prop:tranverse}
    Let~$(\all,\pl)$ be a positive solution of {\rm$\prl$} with $\lm>0$ and~$0\in \Spect(L_{\ssl})$, say~$\sg_k(\all,\pl)=0$.
    Suppose that~$\sg_k(\all,\pl)=0$ is simple with the (normalized) eigenfunction~$\phi_k\in C^{2,r}_0(\ov{\om}\,)$ satisfying~$\Abracket{\phi_k}_{\ssl}\neq 0$.

    Then there exists $\eps>0$, an open neighborhood $\mathcal{U}$ of $(\lm,\all,\pl)$ in $(0,+\ii)\times(0,1)\times C^{2,r}(\ov{\om})$ and a real analytic curve
    \begin{align}
        (-\eps,\eps) \to \mathcal{U}, \qquad s\mapsto (\lm(s), \al(s),\psi(s)),
    \end{align}
    such that
    \begin{itemize}
        \item $(\lm(0), \al(0),\psi(0))=(\lm,\all,\pl)$,
        \item $F^{-1}(0,0)\cap\mathcal{U}= \braces{(\lm(s), \al(s),\psi(s)) \mid -\eps < s< \eps }$.
    \end{itemize}
    Furthermore, locally near the given solution~$(\all,\pl)$, we have~$\psi(s)=\pl+s\phi_k+\vxi(s)$, with
                \begin{align}\label{2907.0}
                    \Abracket{ [ \phi_k]_{\sscp \lm(s)},\vxi(s) }_{\sscp \lm(s)}=0,\quad s\in (-\eps,\eps)
                \end{align}
                and~$\xi(0)=0$, while
                \begin{align}\label{2907.1}
                    \vxi^{'}(0)& \equiv 0,\\
                    \al^{'}(0)&=-\lm <\phi_k>_{\ssl},\\
                    \lm^{'}(0)&=0,\\
                    \psi^{'}(0)&= \phi_k.
                \end{align}
                Moreover,
                \begin{itemize}
                    \item either $\lm(s)=\lm$ is constant in $(-\eps,\eps)$,
                    \item or $\lm^{'}(s)\neq 0$, $\sg_k(s)\neq 0$ in $(-\eps,\eps)\setminus\{0\}$, in which case $\sg_k(s)$ is simple in $(-\eps,\eps)$ and
                \begin{align}\label{2907.5}
                    \Abracket{[\phi_k]_{\ssl},\pl}_{\ssl}\neq 0\mbox{ and } \Abracket{[\phi_k]_{\ssl},\pl}_{\ssl} \mbox{   has the same sign as } \Abracket{\phi_k}_{\ssl},
                \end{align}
                \begin{align}\label{2907.10}
                    \dfrac{\sg_k(s)}{\lm^{'}(s)}
                    =\dfrac{p<[\phi_k]_{\ssl},\psi_{\ssl}>_{\ssl}+\mbox{\rm o}(1)} {<[\phi_k]_{\ssl}^2>_{\ssl}+\mbox{\rm o}(1)},\mbox{ as }s\to 0.
                \end{align}
                \end{itemize}

\end{prop}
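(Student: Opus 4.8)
The plan is to run a Crandall--Rabinowitz type argument on the analytic map $F$ of~\eqref{eF}, which is genuinely real analytic in $(\lm,\al,\psi)$ in a neighbourhood of the given \emph{positive} solution $(\lm,\all,\pl)$ because there $[\al+\lm\psi]_+^p\equiv(\al+\lm\psi)^p$ (compare the proof of Theorem~\ref{thm:gel}). First I would record the linear data at the degenerate solution, exactly as in Section~\ref{sec3}: $\Ker D_{(\al,\psi)}F(\lm,\all,\pl)$ consists of the pairs $(s,\phi)$ with $s=-\lm\Abracket{\phi}_{\ssl}$ and $L_{\ssl}\phi=0$, so since $\sg_k(\all,\pl)=0$ is simple (and, the solution being positive, $\Eigen(T_{\ssl};0)=\{0\}$ by Lemma~\ref{lem:spectral}) this kernel is one dimensional, spanned by $e_k:=(-\lm\Abracket{\phi_k}_{\ssl},\phi_k)$; dually, arguing as in Lemma~\ref{lem:iso}, $(t,f)\in\Ran D_{(\al,\psi)}F(\lm,\all,\pl)$ iff $f+\tfrac{t}{\ml}\rlq\in\Ran L_{\ssl}$, which by the Fredholm alternative for $L_{\ssl}$ (through the self-adjoint $T_{\ssl}$) holds iff $\ell(t,f):=t\Abracket{\phi_k}_{\ssl}+\ino f\phi_k=0$. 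Thus the cokernel is one dimensional, represented by $\ell$.

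The key step --- and, I expect, the main obstacle --- is the transversality relation $D_\lm F(\lm,\all,\pl)\notin\Ran D_{(\al,\psi)}F(\lm,\all,\pl)$. Since $D_\lm F(\lm,\all,\pl)=\big(p\ino\rlq\pl,\,-p\rlq\pl\big)$, a short computation that only uses $\ino\rlq[\pl]_{\ssl}=0$ shows $\ell\big(D_\lm F\big)=-p\,\ml\,\Abracket{[\phi_k]_{\ssl},\pl}_{\ssl}$. To evaluate the right-hand side I would test the eigenfunction equation $-\Delta\phi_k=\tl\rlq[\phi_k]_{\ssl}$ against $\pl$, integrate by parts using $-\Delta\pl=\rl=\rlq(\all+\lm\pl)$, collect the mean-subtracted terms, and invoke the constraint identity $\all+\lm\Abracket{\pl}_{\ssl}=\ino\rl/\ml=1/\ml$; this yields
\begin{align}\label{eq:key-identity}
\lm(p-1)\,\Abracket{[\phi_k]_{\ssl},\pl}_{\ssl}=\frac{\Abracket{\phi_k}_{\ssl}}{\ml}.
\end{align}
As $\lm>0$, $p>1$, $\ml>0$, \eqref{eq:key-identity} immediately gives \eqref{2907.5} (the quantity $\Abracket{[\phi_k]_{\ssl},\pl}_{\ssl}$ is nonzero since $\Abracket{\phi_k}_{\ssl}\neq0$ by hypothesis, and shares its sign), hence $\ell(D_\lm F)\neq0$, which is exactly the transversality we need. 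The same testing also records $\Abracket{[\phi_k]_{\ssl}^2}_{\ssl}>0$: otherwise $[\phi_k]_{\ssl}\equiv0$ on $\om_+=\om$, making $\phi_k$ a nonzero harmonic function vanishing on $\p\om$, which is impossible.

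Given transversality, $DF(\lm,\all,\pl)$ is surjective with complemented one dimensional kernel $\{0\}\times\R e_k$ (an element of $\Ker DF$ with nonzero $\lm$-component would force $D_\lm F\in\Ran D_{(\al,\psi)}F$), so by the analytic implicit function theorem (e.g.~\cite[Theorem 4.5.4]{BuffoniToland2003analytic}) $F^{-1}(0,0)$ is, near $(\lm,\all,\pl)$, a real analytic curve tangent to $\{0\}\times\R e_k$. Since $\Abracket{[\phi_k]_{\sscp\lm(s)},\phi_k}_{\sscp\lm(s)}\neq0$ near $s=0$, one may reparametrise this curve by the $s$ fixed through the normalisation \eqref{2907.0}; the tangent vector at $s=0$ then lies in $\{0\}\times\R e_k$ with its $\phi_k$-component normalised to $1$, which is precisely \eqref{2907.1}: $\lm'(0)=0$, $\al'(0)=-\lm\Abracket{\phi_k}_{\ssl}$, $\psi'(0)=\phi_k$, $\xi(0)=\xi'(0)=0$. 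The dichotomy is then immediate: $\lm(s)$ is real analytic with $\lm(0)=\lm$ and $\lm'(0)=0$, so either $\lm(s)\equiv\lm$ on $(-\eps,\eps)$, or $\lm'$ is a nonzero analytic function with a zero at $s=0$ and hence $\lm'(s)\neq0$ for $0<|s|<\eps$ after shrinking $\eps$.

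Finally, for \eqref{2907.10} I would continue $\phi_k$ along the curve to the family $\phi_k(s)$ of $L_{\sscp\lm(s)}$-eigenfunctions with eigenvalue $\sg_k(s)$, $\sg_k(0)=0$, analytic by perturbation of a simple eigenvalue. Differentiating $-\Delta\psi(s)=[\al(s)+\lm(s)\psi(s)]_+^p$ in $s$ and using the differentiated constraint to eliminate $\al'(s)$ gives
\begin{align}
L_{\sscp\lm(s)}\psi'(s)=p\,\lm'(s)\,[\al(s)+\lm(s)\psi(s)]_+^{p-1}\,[\psi(s)]_{\sscp\lm(s)},
\end{align}
and pairing this against $\phi_k(s)$, using the symmetry of $L_{\sscp\lm(s)}$ with respect to the mean-subtracted weight together with $L_{\sscp\lm(s)}\phi_k(s)=\sg_k(s)[\al(s)+\lm(s)\psi(s)]_+^{p-1}[\phi_k(s)]_{\sscp\lm(s)}$, produces
$$
\frac{\sg_k(s)}{\lm'(s)}=\frac{p\,\Abracket{[\phi_k(s)]_{\sscp\lm(s)},[\psi(s)]_{\sscp\lm(s)}}_{\sscp\lm(s)}}{\Abracket{[\phi_k(s)]_{\sscp\lm(s)},[\psi'(s)]_{\sscp\lm(s)}}_{\sscp\lm(s)}},
$$
and letting $s\to0$, with $\psi(s)\to\pl$, $\psi'(s)\to\psi'(0)=\phi_k$, $\phi_k(s)\to\phi_k$, yields \eqref{2907.10}. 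Since the right-hand side of \eqref{2907.10} tends to $p\Abracket{[\phi_k]_{\ssl},\pl}_{\ssl}/\Abracket{[\phi_k]_{\ssl}^2}_{\ssl}\neq0$ (numerator nonzero by \eqref{eq:key-identity}, denominator positive by the previous paragraph), for small $|s|$ we have $\sg_k(s)=0\iff\lm'(s)=0$; in the non-constant branch of the dichotomy this gives $\sg_k(s)\neq0$ for $0<|s|<\eps$, and $\sg_k(s)$ remains simple there by continuity. The routine parts I have suppressed are the Lyapunov--Schmidt bookkeeping, the mean-subtracted integrations by parts, and the elementary eigenvalue perturbation; the only genuinely delicate points are the identity \eqref{eq:key-identity} and, to a lesser extent, the analyticity of $s\mapsto\sg_k(s)$ in this nonlocal setting.
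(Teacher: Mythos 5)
Your sketch is correct and follows essentially the route the paper relies on: the paper does not reprove this proposition but cites \cite[Prop. 2.7]{BJ2022uniqueness} (remarking only that the planar argument extends verbatim to $N\geq 3$), and that argument is exactly the Crandall--Rabinowitz/Lyapunov--Schmidt bending scheme you outline, with the transversality \eqref{2907.5} obtained, as in your key identity, by testing $-\Delta\phi_k=\tl\rlq[\phi_k]_{\ssl}$ against $\pl$ and invoking the unit-mass constraint, and with \eqref{2907.10} coming from differentiating the equation along the curve and pairing with the continued eigenfunction. The only caveat worth recording is that your identity divides by $p-1$, so it settles the statement for $p>1$ only, which is precisely the regime in which the proposition is applied in this paper.
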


The Lemma is proved in~\cite{BJ2022uniqueness} for the planar case, but the argument extends exactly as it stands to higher dimension, which is why we will not repeat the proof here.

The assumption that the zero eigenvalue is simple and the corresponding eigenfunction has nonzero weighted average seems nontrivial.
But on the ball it is automatically guaranteed, by the following
\begin{prop}\label{prop:simple}
    Let $\om=B_R(0)\subset \R^N$, $N\geq 2$ and let $(\all,\pl)$ be a positive solution of {\rm$\prl$} with~$\lm>0$ and $p\in (1,p_N)$.
Suppose that $\sg_k=\sg_k(\all,\pl)=0$ and let $\phi_k$ be any
corresponding eigenfunction. Then:
\begin{itemize}
    \item[(i)] $\Abracket{\phi_k}_{\ssl}\neq 0$;
    \item[(ii)] $\sg_k(\all,\pl)$ is simple, that is, it admits at most one linearly independent eigenfunction.
\end{itemize}
\end{prop}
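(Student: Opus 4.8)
The plan is to use the radial structure on $\om=B_R(0)$ to show first that \emph{every} eigenfunction $\phi_k$ associated with $\sg_k=0$ is radial, and then to deduce (i), from which (ii) follows at once.

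Since $(\all,\pl)$ is a positive solution we have $\om_+=\om$ and $\rlq=(\all+\lm\pl)^{p-1}>0$ on $\ov{\om}$; by \cite{GNN1979symmetry} the function $\pl$ is radial and radially decreasing, and integrating $-(r^{N-1}\pl')'=r^{N-1}\rl$ from the origin (with $\rl>0$) gives $\pl'(r)<0$ for all $r\in(0,R]$. An eigenfunction of $L_{\ssl}$ with $\sg_k=0$ solves $-\Delta\phi_k=\tl\rlq[\phi_k]_{\ssl}$, and since $[\phi_k]_{\ssl}=\phi_k-\Abracket{\phi_k}_{\ssl}$ differs from $\phi_k$ only by a constant, decomposing $\phi_k=\sum_{m\geq 0}\phi_k^{(m)}$ into its spherical-harmonic components of degree $m$ and using that $\rlq$ is radial, every component with $m\geq 1$ satisfies the Dirichlet problem $-\Delta\phi_k^{(m)}=\tl\rlq\phi_k^{(m)}$ in $B_R$. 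Hence, unless $\phi_k^{(m)}\equiv 0$, the number $\tl=\lm p$ must be a Dirichlet eigenvalue of $-\Delta$ with weight $\rlq$ restricted to the mode-$m$ subspace.

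The heart of the argument is to exclude this. Let $\nu_1^{(m)}$ denote the lowest such mode-$m$ eigenvalue. Differentiating $-\Delta\pl=(\all+\lm\pl)^p$ in $x_i$ shows that $\p_{x_i}\pl=\pl'(r)\tfrac{x_i}{r}$ solves $-\Delta w=\tl\rlq w$; it belongs to the mode-$1$ sector and its radial profile $g(r)=\pl'(r)$ is \emph{strictly negative on $(0,R]$}. Denoting by $\rho>0$ on $(0,R)$ the radial profile of the first mode-$1$ Dirichlet eigenfunction (so $\rho(R)=0$ and, by uniqueness for the radial ODE, $\rho'(R)<0$), the Wronskian identity
\begin{align}
 -R^{N-1}\rho'(R)\,g(R)=\bigl(\nu_1^{(1)}-\tl\bigr)\int_0^R r^{N-1}\rlq\, g\,\rho\;\dd r
\end{align}
has a strictly negative left-hand side and a strictly negative integral on the right, whence $\nu_1^{(1)}>\tl$. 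Since $m\mapsto\nu_1^{(m)}$ is strictly increasing (the Rayleigh quotient gains the nonnegative term $\tfrac{m(m+N-2)}{r^2}$), $\tl$ lies strictly below every Dirichlet eigenvalue in every mode $m\geq 1$. Therefore $\phi_k^{(m)}\equiv 0$ for all $m\geq 1$, i.e. $\phi_k$ is radial.

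To finish, suppose towards (i) that $\Abracket{\phi_k}_{\ssl}=0$. Then $-\Delta\phi_k=\tl\rlq\phi_k$ and $\ino \rlq\phi_k=0$, so integrating $-(r^{N-1}\phi_k')'=\tl r^{N-1}\rlq\phi_k$ over $(0,R)$ forces $R^{N-1}\phi_k'(R)=-\tl\int_0^R r^{N-1}\rlq\phi_k\,\dd r=0$; thus $\phi_k(R)=\phi_k'(R)=0$, and since the radial ODE is regular at $r=R>0$, uniqueness gives $\phi_k\equiv 0$, a contradiction. Hence $\Abracket{\phi_k}_{\ssl}\neq 0$, proving (i). For (ii), if $\phi_k$ and $\tilde\phi_k$ are two eigenfunctions for $\sg_k=0$, then $\Abracket{\tilde\phi_k}_{\ssl}\,\phi_k-\Abracket{\phi_k}_{\ssl}\,\tilde\phi_k$ is again such an eigenfunction with vanishing weighted average, hence identically zero by (i); so $\phi_k$ and $\tilde\phi_k$ are linearly dependent. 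The step I expect to be the main obstacle is the comparison $\tl<\nu_1^{(1)}$: one must correctly recognize $\p_{x_i}\pl$ as a sign-definite mode-$1$ solution of the linearized equation and keep careful track of the boundary contributions in the Wronskian identity (the inner endpoint $r=0$ being harmless by the $O(r)$ behaviour of mode-$1$ profiles), together with the monotonicity of $\nu_1^{(m)}$ in $m$.
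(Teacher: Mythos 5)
Your proof is correct, and for the key step it takes a genuinely different route from the paper. Both arguments share the same skeleton: reduce (i) to the zero-average case, where the nonlocal term drops and $\phi_k$ solves $-\Delta\phi_k=\tl\rlq\phi_k$ with Dirichlet data; show $\phi_k$ is radial; and derive the contradiction from the divergence-theorem identity $-\int_{\partial B_R}\partial_\nu\phi_k=\ml\Abracket{\phi_k}_{\ssl}$ (you phrase it as $\phi_k'(R)=0$ plus ODE backward uniqueness, the paper as $\phi_k'(R)\neq 0$ versus the identity — logically the same); part (ii) is the identical linear-combination argument. The difference is in how radiality is obtained: the paper simply cites Lin--Ni (Proposition 3.3 of their counterexample paper), whereas you prove the needed spectral fact from scratch by decomposing into spherical harmonics, recognizing $\partial_{x_i}\pl$ as a sign-definite mode-$1$ solution of the linearized equation (here positivity of the solution, $\all>0$, gives $\rl>0$ and hence $\pl'<0$ on $(0,R]$, and also keeps the weight $\rlq\geq\all^{p-1}>0$ nondegenerate), and running the Wronskian/Sturm comparison to get $\tl<\nu_1^{(1)}\leq\nu_1^{(m)}$ for all $m\geq1$; your identity and all sign bookkeeping, including the harmless boundary terms at $r=0$, check out. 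What each approach buys: the paper's proof is shorter by citation; yours is self-contained, makes transparent exactly where the ball geometry and the value $\tl=\lm p$ enter, and in fact proves slightly more than is needed, namely that \emph{every} eigenfunction of $L_{\ssl}$ with $\sg=0$ (not only the zero-average ones) is radial. The only points you leave implicit — that the spherical-harmonic projections of an $H^1_0$ eigenfunction solve the projected Dirichlet problems, and the existence and positivity (with $\rho'(R)<0$) of the first mode-$1$ eigenfunction of the weighted, nondegenerate Sturm--Liouville problem — are standard and do not constitute gaps.
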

\begin{proof}[Proof of Proposition~\ref{prop:simple}]
    (i) Argue by contradiction and assume that~$\Abracket{\phi_k}_{\ssl}=0$.
    Then~$\phi_k$ is a classical solution of
    \begin{align}
        \begin{cases}
            -\Delta\phi_k =\tl \rlq \phi_k, & \mbox{ in } B_R(0),  \vspace{0.5em} \\
            \phi_k=0, & \mbox{ on } \partial B_R(0)
        \end{cases}
    \end{align}
    By~\cite[Propisition 3.3]{LinNi-1988counterexample},~$\phi_k$ is radial, hence
    \begin{align}
        \phi_k'(R)=\frac{\partial \phi_k}{\partial \nu}|_{\partial B_R(0)} \neq 0
    \end{align}
    which contradicts the vanishing of~$\Abracket{\phi_k}_{\ssl}$ since
    \begin{align}
        -\int_{\partial B_R(0)} \frac{\partial\phi_k}{\partial{\nu}}\dd{s}
        =\ino (-\Delta\phi_k)
        =\ino \tl\rlq\phi_k
        =\ml\Abracket{\phi_k}_{\ssl}.
    \end{align}

    (ii) Since (i) holds for any eigenfunction of~$\sg_k=0$, such an eigenvalue is necessarily simple: if there were two linearly independent eigenfunctions~$\phi_{k,i}$,~$i=1,2$, with~$\Abracket{\phi_{k,i}}_{\ssl} \neq 0$,~$i=1,2$, then we would have,
    \begin{align}
        \phi_{k,1}-\frac{\Abracket{\phi_{k,1}}_{\ssl}}{\Abracket{\phi_{k,2}}_{\ssl} } \phi_{k,2}
    \end{align}
    which is a nonzero eigenfunction of~$\sg_k$ with vanishing weighted average, a contradiction.
\end{proof}

The remaining part of this section is devoted to the study of the solutions on the ball~$B_{R_N}(0)\equiv \mathbb{D}_N$ which has unit volume.
We will use time to time when needed the following facts.

\begin{rmk}\label{rem:var}
\begin{itemize}
    \item[(i)] As far as $\lm<\lm_+(\mathbb{D}_{N},p)$, any solution is positive according to Theorems B and C.
    Consequently, the first eigenvalue $\sg_1(\all,\pl)$ as defined in Section \ref{sec3} is readily seen to coincide with the first eigenvalue defined for positive solutions directly via \eqref{4.1}, see also \cite{BJ2022uniqueness}.
    \item[(ii)]  As far as $\lm<\lm_+(\mathbb{D}_{N},p)$ we have $[\al+\lm\psi]_+^p\equiv (\al+\lm\psi)^p$ and the map $F$ defined in \eqref{eF} is jointly real analytic as a function of $(\lm, \al,\psi)$.
    Thus, whenever $\lm<\min\braces{\lm_*(\mathbb{D}_{_N},p),  \lm_+(\mathbb{D}_{_N},p)}$, we can use the analytic implicit function theorem (see Theorem 4.5.4 in \cite{BuffoniToland2003analytic}) in the proof of Lemma \ref{lem1.1} to deduce that in fact $\lm\mapsto(\all,\pl)$ is real analytic.
    As far as solutions $(\all,\pl)$ are positive for some $\lm<\lm_*(\mathbb{D}_{_N},p)$ we will always use this improved version of Lemma \ref{lem1.1} when needed with no further comments.
    \item[(iii)] By the result in \cite{Berestycki1980free} there exists at least one variational solution of {\rm $\prl$} for any $\lm$.
    Thus, in view of the uniqueness of solutions (see \cite{BandleSperb1983qualitative} for $N=2$ and Theorem C above for $N\geq 3$), any solution of {\rm $\prl$} in~$\mathbb{D}_{N}$ is a variational solution.
    For positive variational solutions, we know that~$\sg_1(\all,\pl)\geq0$, see e.g.~\cite[Lemma 2.6]{BJ2022uniqueness}, where the two-dimensional case was handled, although the argument works exactly as it stands in all dimensions.
\end{itemize}

\end{rmk}

\begin{proof}[Proof of Theorem~\ref{thm:lm*}]

Recall that~${\mathcal{G}}_{*}$ is the set of solutions for~$\lm\in [0,\lm_*(\mathbb{D}_{_N},p))$, forming a~$C^1$ curve, along which
\begin{align}
    \sg_1(\all,\pl)>0, & & \frac{\dd \el}{\dd\lm}>0, & & \frac{\dd\all}{\dd\lm}<0.
\end{align}

\

\noi\textbf{Argue by contradiction and suppose that}~$\lm_*(\mathbb{D}_{N},p)<\lm_+(\mathbb{D}_{N},p)$. \\
 According to Remark~\ref{rem:var}
the curve~$\mathcal{G}_*$ is analytic.
By Lemma \ref{lemE1}, along a subsequence we can pass to the limit as $\lm_n\to\lm_*(\mathbb{D}_{N},p)$ and deduce that $(\al_n,\psi_n)$ converges in $C^2$ to a solution~$(\al_*,\psi_*) $ of $\prl$ for $\lm=\lm_*$ with $\al=\al_*>0$.
Again by Remark \ref{rem:var} we see that~$(\al_*,\psi_*)$ is a variational solution and hence $\sg_1(\al_*,\psi_*)\geq 0$.
Then necessarily $\sg_1(\al_*,\psi_*)=0$, as otherwise by Lemma \ref{lem1.1} and the continuity of eigenvalues we would have a contradiction to the definition of $\lm_*$.
As a consequence, by Proposition~\ref{prop:simple}, we see that the transversality condition needed to apply Proposition \ref{prop:tranverse} is satisfied, whence we can continue $\mathcal{G}_{*}$ to a real analytic
parametrization without bifurcation points,
$$
\mathcal{G}_{\lm_*+\eps}=\left\{[-s_*,\eps)\ni s\mapsto (\lm(s),\al(s),\psi(s))\right\},
$$
for some $s_*>0$,
where, for some $\eps>0$, we have that for any
$s\in [-s_*,\eps)$, $(\al(s),\psi(s))$ is a positive solution of $\prl$ with $\lm=\lm(s)$ and 
$(\lm(s),\al(s),\psi(s))\in \mathcal{G}_{*}$ for $s\leq 0$.

We claim that \emph{ $\lm(s)$ is monotonic increasing along the branch}, i.e., the curve $(\lm(s),\al(s),\psi(s))$ bends to the right of $\lm_*$.
Indeed, we recall from \rife{2907.5} in Proposition \ref{prop:tranverse} that,
\begin{align}
    \Abracket{[\phi_1]_{\sscp \lm_*},\psi_*}_{\sscp \lm_*}\neq 0
\mbox{ and } \Abracket{ [\phi_1]_{\sscp \lm_*},\psi_*}_{\sscp \lm_*} \mbox{  has the same sign as }
<\phi_1>_{\sscp \lm_*}.
\end{align}

At this point we use \rife{2907.10}
in Proposition \ref{prop:tranverse}, that is, putting $\sg_1(s)=\sg_1(\al(s),\psi(s))$,
\begin{align}
    \dfrac{\sg_1(s)}{\lm^{'}(s)}=\dfrac{p\Abracket{[\phi_1]_{\sscp \lm_*},\psi_*}_{\sscp \lm_*}+\mbox{\rm o}(1)}
{\Abracket{[\phi_1]_{\sscp \lm_*}^2}_{\sscp \lm_*}+\mbox{\rm o}(1)},\mbox{ as }s\to 0.
\end{align}

For $s$ small and negative we have $\sg_1(s)>0$ and $\lm^{'}(s)>0$ implying that $\Abracket{[\phi_1]_{\sscp \lm_*},\psi_*}_{\sscp \lm_*}>0$. Therefore
we infer that $\sg_1(s)\neq 0$ and has the same sign of $\lm^{'}(s)$ for $s$ small enough in a deleted neighborhood of $s=0$.
Indeed the eigenvalues $\sg_k(s)=\sg_k(\al(s), \psi(s))$ of $L_{\ssl(s)}$ are real analytic functions of $s$ (\cite{BuffoniToland2003analytic}) and then the level sets of a fixed $\sg_k(s)$ cannot have accumulation points unless $\sg_k(s)$ is constant on $(-s_*,\eps)$.
However we can rule out the latter case since for $\lm< \lm_*$ we have $\sg_1(\all,\pl)>0$ and then
no $\sg_k(s)$ can vanish identically.

At this point we use again that any solution of $\prl$ in the ball~$B_{R_{N}}(0)$ is also a variational solution (Remark \ref{rem:var}).
Therefore, $(\al(s),\psi(s))$ is a positive variational solution and we infer that $\sg_1(s)\geq 0$ for
any $s>0$ small enough.
Since $\sg_1(s)\neq 0$ for $s\neq 0$ small enough we deduce that $\sg_1(s)>0$ for $s>0$ small and then in particular that $\lm^{'}(s)>0$. Therefore,
$\lm^{'}(s)>0$ for $s$ in a small enough right neighborhood of $s=0$, showing that the curve $(\lm(s),\al(s),\psi(s))$ bends to the right of $\lm_*$.

In particular,
\begin{align}\label{flexen}
    \lim\limits_{\lm\to \lm_*^+}\frac{d \el}{d\lm}
    =& \lim\limits_{\lm\to \lm_*^+}p\,\ml \left(\Abracket{[\pl]^2_{\ssl}}_{\ssl}+\lm\Abracket{[\pl]_{\ssl},\eta_{\ssl}}_{\ssl}\right) \\
    =& \mbox{O}(1)+p\,m_{\lm_*}\lim\limits_{\lm\to \lm_*^+}\lm\Abracket{[\pl]_{\ssl},\eta_{\ssl}}_{\ssl} \\
    =& \mbox{O}(1)+p\,m_{\lm_*} \lim\limits_{s\to \lm_*^+}\frac{1}{\lm^{'}(s)}\lm(s)\Abracket{[\psi(s)]_{\sscp \lm(s)},\psi^{'}(s)}_{\sscp \lm(s)} \\
    =&\mbox{O}(1)+p\,m_{\lm_*}\lm_*\lim\limits_{s\to \lm_*^+}\frac{1}{\lm^{'}(s)} \left(\Abracket{[\psi_*]_{\sscp \lm_*},\psi^{'}(\lm_*)}_{\sscp \lm_*}+\mbox{o}(1)\right) \\
    =&\mbox{O}(1)+p\,m_{\lm_*}\lm_* \lim\limits_{s\to \lm_*^+}\frac{1}{\lm^{'}(s)} \left(\Abracket{[\psi_*]_{\lm_*},\phi_1}_{\lm_*}+\mbox{o}(1)\right)=+\infty,
\end{align}
and similarly, with the notations of Proposition \ref{pr3.2.best},
\begin{align}\label{flexal}
    \lim\limits_{\lm\to \lm_*^+}\frac{d \all}{d\lm}
    =& -\lim\limits_{\lm\to \lm_*^+}\left(\Abracket{\pl}_{\ssl}+\lm\Abracket{\eta_{\ssl}}_{\ssl}\right) \\
    =& \mbox{O}(1)-\lim\limits_{\lm\to \lm_*^+}\lm\Abracket{\eta_{\ssl}}_{\ssl} \\
    =& \mbox{O}(1)-\lm_*\lim\limits_{s\to \lm_*^+}\frac{1}{\lm^{'}(s)}\Abracket{\psi^{'}(s)}_{\sscp \lm(s)} \\
    =& \mbox{O}(1)-\lm_*\lim\limits_{s\to \lm_*^+}\frac{1}{\lm^{'}(s)}\left(\Abracket{\phi_1}_{\lm_*}+\mbox{o}(1)\right)=-\ii.
\end{align}
This is impossible, as we have already seen that~$\all$ is a global~$C^2$ function of~$\lm\in(0,+\infty)$. Therefore we conclude that~$\lm_*(\mathbb{D}_N,p)\geq \lm_+(\mathbb{D}_N,p)$.

We are left with showing that $\all\searrow 0^+$ as $\lm\nearrow \lm_+$, since then $\mul\searrow 0^+$ and $\|v^{(\mu)}\|_\ii\to +\ii$ follow immediately as in
Theorem \ref{thm:gel}. If $\lm_*>\lm_+$ the conclusion follows from Theorem \ref{thm:gel}, whence we can assume $\lm_*=\lm_+$. By Lemma \ref{lemE1} possibly along a subsequence we can assume that
     $\pl\to \psi_*$, so that $(\al_*,\psi_*)$ solves $\prl$ for $\lm=\lm_*=\lm_+$. By contradiction assume that $\al_*>0$, by Lemma \ref{lem1.1} it is readily seen that necessarily $\sg_1(\al_*, \psi_*)=0$. At this point, since $\al_*>0$ and in view of Proposition \ref{prop:simple},
     then, by Proposition \ref{prop:tranverse}
     we can continue once more the curve of solutions to a real analytic parametrization of the form $(-\eps,\eps)\ni s\mapsto (\lm(s),\al(s),\psi(s))$, where
    $(\lm(0),\al(0),\psi(0))=(\lm_*,\al_*,\psi_*)$ and without loss of generality we can assume that, for $s<0$, $(\lm(s),\al(s),\psi(s))$ coincides with
    the branch of unique solutions for $\lm<\lm_*$. As above in this situation we have $\lm^{'}(s)\neq 0$,
    $\sg_1(s)=\sg_1(\al,\psi)\left.\right|_{\lm=\lm(s),\al=\al(s),\psi=\psi(s)}\neq 0$ in $(-\eps,\eps)\setminus\{0\}$, $\sg_1(s)$ is simple in $(-\eps,\eps)$
    and \eqref{2907.5}, \eqref{2907.10} hold true, that is, for $\lm=\lm(s)$ we have,

    \begin{align}\label{2907.5.a}
                    \Abracket{[\phi_1]_{\lm_*},\psi_*}_{\lm_*}\neq 0\mbox{ and } \Abracket{[\phi_1]_{\lm_*},\psi_*}_{\lm_*} \mbox{   has the same sign as } \Abracket{\phi_1}_{\lm_*},
                \end{align}
                \begin{align}\label{2907.10.a}
                    \dfrac{\sg_1(s)}{\lm^{'}(s)}
                    =\dfrac{p\Abracket{[\phi_1]_{\lm_*},\psi_{*}}_{\lm_*}+\mbox{\rm o}(1)} {\Abracket{[\phi_1]_{\lm_*}^2}_{\lm_*}+\mbox{\rm o}(1)},\mbox{ as }s\to 0.
                \end{align}
    Passing to the limit as $s\to 0^-$ in \eqref{2907.10.a}, since $\lm^{'}(s)>0$ and $\sg_1(s)>0$, in view of \eqref{2907.5.a} we have that necessarily $<[\phi_1]_{\lm_*},\psi_{*}>_{\lm_*}>0$. Consequently we deduce again by \eqref{2907.10.a} that for $s\to 0^+$ small enough, $\lm^{'}(s)$ and $\sg_1(s)$ have
    the same sign. Thus, if $\sg_1(s)<0$ we also have $\lm^{'}(s)<0$ and the curve bends back to the region where $\lm<\lm_*$, which is of course a contradiction to the definition of $\lm_*$. Therefore necessarily $\sg_1(s)>0$ and $\lm^{'}(s)>0$, so that for $s>0$ small enough we would have $\al(s)>0$ and $\lm>\lm_*$
    and by the definition of $\lm_+$, the uniqueness for $\lm<\lm_*$ and Lemma \ref{lemE1} we infer the existence of a solution $(\al_-,\psi_-)$ for some $\al_-\leq 0$ for $\lm=\lm_+$, which is a contradiction to the uniqueness of solutions, see \cite{BandleSperb1983qualitative} for $N=2$ and Theorem C for $N\geq 3$.
\end{proof}

As an immediate consequence, in view of Proposition~\ref{prop:monotonicity of alpha-lambda I}, we have,

\begin{prop}\label{prop:monotonicity of alpha-lambda II}
    For any~$\lambda\in(0,+\infty)$, we have
    \begin{align}
        \frac{\dd\all}{\dd\lm}<0.
    \end{align}
\end{prop}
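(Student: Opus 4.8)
The plan is to cover $(0,+\infty)$ by the three pieces $(0,\lm_+(\mathbb{D}_N,p))$, $\{\lm_+(\mathbb{D}_N,p)\}$ and $(\lm_+(\mathbb{D}_N,p),+\infty)$ and to treat each with results already in hand. On $(\lm_+(\mathbb{D}_N,p),+\infty)$ there is nothing to prove: this is exactly Proposition~\ref{prop:monotonicity of alpha-lambda I}. On $(0,\lm_+(\mathbb{D}_N,p))$ I would invoke Theorem~\ref{thm:lm*}, which gives $\lm_+(\mathbb{D}_N,p)\le\lm_*(\mathbb{D}_N,p)$; hence any such $\lm$ satisfies $\lm<\lm_*(\mathbb{D}_N,p)$, so the unique solution $(\all,\pl)$ of $\prl$ lies on the curve $\mathcal{G}_*(\mathbb{D}_N)$ of Theorem~\ref{thm1}, along which $\sg_1(\all,\pl)>0$ and therefore $\frac{\dd\all}{\dd\lm}<0$ by Proposition~\ref{pr3.2.best} (equivalently, this is part of the conclusion of Theorem~\ref{thm1} itself).

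It remains to handle the single value $\lm=\lm_+(\mathbb{D}_N,p)$, and here I would use three facts: (a) $\all$ is a $C^2$ function of $\lm$ on all of $(0,+\infty)$, as established in Section~\ref{sect:calc in ball}; (b) $\all>0$ on $(0,\lm_+(\mathbb{D}_N,p))$ (by definition of $\lm_+(\mathbb{D}_N,p)$ and uniqueness) and $\all<0$ on $(\lm_+(\mathbb{D}_N,p),+\infty)$ (by Theorems~B and~C, or the closed-form expressions of Section~\ref{sect:calc in ball}, together with Proposition~\ref{prop:monotonicity of alpha-lambda I}), so that by continuity $\all(\lm_+(\mathbb{D}_N,p))=0$ and $\all$ changes sign at $\lm_+(\mathbb{D}_N,p)$; and (c) $\frac{\dd\all}{\dd\lm}$ is continuous and, by the first part of the argument, strictly negative on $(0,+\infty)\setminus\{\lm_+(\mathbb{D}_N,p)\}$. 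From (a) and (c) we get $\frac{\dd\all}{\dd\lm}\big|_{\lm=\lm_+(\mathbb{D}_N,p)}\le0$, and only the vanishing of this quantity must be excluded. Were it zero, Taylor's formula for the $C^2$ function $\all$ at $\lm_+(\mathbb{D}_N,p)$ would read $\all(\lm)=\tfrac12\,\all''(\lm_+(\mathbb{D}_N,p))\,(\lm-\lm_+(\mathbb{D}_N,p))^2+o\big((\lm-\lm_+(\mathbb{D}_N,p))^2\big)$, whose sign near $\lm_+(\mathbb{D}_N,p)$ would be constant, contradicting (b); hence $\frac{\dd\all}{\dd\lm}<0$ at $\lm_+(\mathbb{D}_N,p)$ as well. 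Alternatively, one differentiates directly the closed-form expressions for $\all$ near $\lm_+(\mathbb{D}_N,p)$ from Section~\ref{sect:calc in ball}, using $u_0(1)=0$ and $u_0'(1)<0$, which gives the same conclusion.

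I do not anticipate a genuine obstacle: the substantive work is already contained in Proposition~\ref{prop:monotonicity of alpha-lambda I} and in Theorem~\ref{thm:lm*}, and the present statement is essentially their juxtaposition. The one mildly delicate point is precisely the strictness at $\lm=\lm_+(\mathbb{D}_N,p)$, where patching the two one-sided monotonicity statements only yields $\frac{\dd\all}{\dd\lm}\le0$ a priori, so one must use either the sign change of $\all$ across $\lm_+(\mathbb{D}_N,p)$ (as above) or the explicit formulas to rule out a degenerate critical point there.
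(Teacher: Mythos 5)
Your decomposition is exactly the paper's: the statement is recorded there as an immediate consequence of Proposition~\ref{prop:monotonicity of alpha-lambda I} (covering $\lm>\lm_+(\mathbb{D}_N,p)$) together with Theorem~\ref{thm:lm*}, which places every $\lm<\lm_+(\mathbb{D}_N,p)$ below $\lm_*(\mathbb{D}_N,p)$ so that Theorem~\ref{thm1} (via Proposition~\ref{pr3.2.best}) gives $\frac{\dd\all}{\dd\lm}<0$ along the stable branch. On the two open intervals your argument is correct and coincides with the paper's.

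The one step that does not hold up as written is your primary argument at the single point $\lm=\lm_+(\mathbb{D}_N,p)$. Knowing that $\all$ is $C^2$, changes sign there, and (for contradiction) has vanishing first derivative produces a contradiction only if $\all''(\lm_+(\mathbb{D}_N,p))\neq 0$; if the second derivative also vanishes, the expansion reads $\all(\lm)=o\big((\lm-\lm_+(\mathbb{D}_N,p))^2\big)$ and carries no sign information. Indeed the smooth function $-(\lm-\lm_+(\mathbb{D}_N,p))^3$ changes sign at $\lm_+(\mathbb{D}_N,p)$ with vanishing first and second derivatives, so sign change plus $C^2$ regularity cannot by themselves exclude a degenerate critical point. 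Your fallback is the correct fix and is what actually closes this point: differentiating the closed-form expression $\all=\lm^{-\frac{1}{p-1}}R(\lm)^{-\frac{2}{p-1}}u_0\big(\tfrac{R_N}{R(\lm)}\big)$ at $\lm=\lm_+(\mathbb{D}_N,p)$, every term multiplying $u_0(r_{_\lm})$ vanishes because $u_0(1)=0$, leaving $\lm_+^{-\frac{1}{p-1}}R_N^{-\frac{2}{p-1}}u_0'(1)\,r'_{_{\lm_+}}<0$, since $u_0'(1)<0$ and $r_{_\lm}=\tfrac{R_N}{R(\lm)}$ is strictly increasing (the derivative of the constraint function of Section~\ref{sect:calc in ball} never vanishes). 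With that substitution the proof is complete and, on the open intervals, identical in substance to the paper's.
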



\section{Proof of Theorem~\ref{thm4}}\label{sec6}
\begin{proof}[Proof of Theorem~\ref{thm4}]
In view of Theorem \ref{thm:lm*} we have
$\lm_*(\mathbb{D}_{N},p)\geq \lm_+(\mathbb{D}_{N},p)$, implying by Theorem \ref{thm:gel} that
$\mathcal{G}_+(\om,p)$
is a real analytic branch and in particular that $\mathcal{R}_+$ as defined therein is a real analytic curve as well for $\lm<\lm_+(\mathbb{D}_N,p)$. The monotonicity of the energy is granted as well by Theorem \ref{thm:gel}, where we remark that,
as $\lm\nearrow\lm_+(\mathbb{D}_N,p)$, by the uniqueness of solutions $\pl$ converges in $C^2$
to a solution of \eqref{psiplus} above, implying that
$\el=\frac{1}{2}\ino \rl \pl\to E_\ii>E_0$. The behavior for $\lm$ small is well known and already described by Theorem A.\\

Therefore, at this point the crux of the proof of Theorem \ref{thm4} is just to show that the derivative of $\mul$ changes sign only once, which is
obtained by an elementary evaluation
about solutions of~$\pqm$. For later convenience we take the following shorthand notation,
\begin{align}
    \lm_+\equiv \lm_+(\mathbb{D}_N,p).
\end{align}

\noi\textbf{Case I: $N=2$.}
Let $\om=\mathbb{D}_{2}$.
Since $\mul=\lm\all^{p-1}$ then from \eqref{eq:change variable} we see that~$\gall^{p-1}=\mul$. Thus it is enough to prove the monotonicity assertion
for $\gall$. Set
\begin{align}
    r_{_\lm}\coloneqq \frac{R_2}{R(\lm)},
\end{align}
whence from Section~\ref{sect:calc in ball} it is readily seen that $r_{_\lm}$ is defined on $[0,\lm_+]$ and that $r_{_\bullet}:[0,\lm_+]\to [0,1]$ is monotonic increasing with $r_{_0}=0$, $r_{_{\lm_+}}=R_2=(\pi)^{-\frac12}$.
In particular by the definition of $R(\lm)$ in Section~\ref{sect:calc in ball} we have that,
\begin{align}\label{rlm}
R_2^{\frac{2}{p-1}}\lambda^{\frac{p}{p-1}}=r_{_\lm}^{\frac{2}{p-1}}{I_p(r_{_\lm})},
\end{align}
where
$$
I_p(r):=\int\limits_{B_r(0)}u_0^p,
$$
and~\eqref{eq:gamma-alpha-lambda-2} can be rewritten as follows,
\begin{align}\label{eq:gamma-lambda-2}
    R_2^{\frac{2}{p-1}}\gall=r_{_\lm}^{\frac{2}{p-1}}u_0(r_{_\lm}).
\end{align}
Taking the derivative of~\eqref{eq:gamma-lambda-2} with respect to~$\lm$, we have that,
\begin{align}
    R_2^{\frac{2}{p-1}} \frac{\dd \gall}{\dd{\lm}}
    = \left(\frac{2}{(p-1)}u_0(r_{_\lm})+r_{_\lm} u_0'(r_{_\lm})\right)r_{_\lm}^{\frac{2}{p-1}-1}\frac{\dd r_{_\lm} }{\dd \lm},
\end{align}
and our aim is to show that~$\frac{\dd \gall}{\dd\lm}$ changes sign only once.

Note that~$\frac{\dd r_{_\lm}}{\dd\lm}>0$ for~$\lm\in (0,\lm_+)$, meanwhile
\begin{align}
    g(\lm)\coloneqq \frac{2}{(p-1)}u_0(r_{_\lm})+r_{_\lm} u_0' (r_{_\lm}),
\end{align}
is continuous in $[0,\lm_+]$, strictly positive as $\lm\to 0^+$ (i.e. $r_{_\lm}\to 0^+$) and strictly negative as $\lm\to \lm_+^{-}$ (i.e. $r_{_\lm}\to R_2^-$).
Since~$u_0$ is radial, $u_0'<0$ and satisfies
\begin{align}
    u_0''+ \frac{1}{r}u_0'=- u_0^p
\end{align}
we see that
$$
g^{'}(\lm)=\left(\frac{2}{(p-1)}u_0'(r_{_\lm})+\frac{d}{dr}(r_{_\lm} u_0' (r_{_\lm}))\right)r^{'}_{_\lm}=
\left(\frac{2}{(p-1)}u_0'(r_{_\lm})-r_{_\lm} u_0^p(r_{_\lm}))\right)r^{'}_{_\lm}<0.
$$
Thus~$g$ is monotonic decreasing from positive to negative values, i.e.~$\gamma_\lambda$ (hence also~$\mul$) first increases and then decreases, turning down at some unique point~$\lm^t\in (0,\lm_+)$, as claimed.

\

\noi\textbf{Case II: $N\geq 3$. } The same idea applies to give the desired bending property in higher dimensions, only noting that some dimensional parameter has to be involved and~$p\in (1,p_{_N})$ for~$N\geq 3$.

Recall from Section~\ref{sect:calc in ball} (see also \cite{BJW2024sharp}) that if $\om=\mathbb{D}_{_N}$ and $N\geq 3$,
then $\prl$ (hence also~$\fbi$) admits a unique solution and that
$$
 \lm_+(\mathbb{D}_N,p)=I_+(\mathbb{D}_N,p)^{\frac{p-1}{p}}=\frac{I_p^{1-\frac{1}{p}}}{R_N^{\frac{N}{p}(1-\frac{p}{p_{_N}})}}.
$$

As before it suffices to prove the bending property for~$\mul$ since~$\mul=\gall^{p-1}$.
We will still use the convenient parameter~$r_{_\lm}=\frac{R_N}{R(\lm)}$.
It is readily seen that $r_{_\lm}$ is defined on $[0,\lm_+]$ (actually on~$[0,+\infty)$) and that $r_{_\bullet}:[0,\lm_+]\to [0,1]$ is monotonic increasing with $r_{_0}=0$,
$r_{_{\lm_+}}=R_N$.
In particular by the definition of $R(\lm)$ we have that,
\begin{align}\label{rlmN}
R_N^{\,\tau_{_N}}\lambda^{\frac{p}{p-1}}=r_{_\lm}^{\,\tau_{_N}}{I_p(r_{_\lm})}, \qquad
\mbox{ with } \tau_{_N}=\frac{N}{p-1}\parenthesis{1-\frac{p}{p_{_N}}}
\end{align}
where
$$
I_p(r):=\int\limits_{B_r(0)}u_0(y)^p\dd{y}.
$$
In view of~\eqref{eq:gamma-alpha-lambda-N},
$$
\gamma=\frac{1}{R(\lm)^{\frac{2}{p-1}}}u_0(\frac{R_{_N}}{R(\lm)}),
$$
we also have that,
\begin{align}\label{eq:gamma-lambda-3}
    R_N^{\frac{2}{p-1}}\gall=r_{_\lm}^{\frac{2}{p-1}}u_0(r_{_\lm}).
\end{align}
Thus, putting $u_0'\equiv \frac{d u_0}{dr}$ and taking the derivative of~\eqref{eq:gamma-lambda-3} with respect to~$\lm$, we need to prove that
$$
\left(\frac{2}{(p-1)}u_0(r_{_\lm})+r_{_\lm}u_0'(r_{_\lm})\right)r_{_\lm}^{\frac{2}{p-1}-1}r^{'}_{_\lm}
$$
changes sign only once.
In view of~$r^{'}_{_\lm}>0$, it is enough to consider the sign of the function,
\begin{align}
    g(\lm)\coloneqq\frac{2}{(p-1)}u_0(r_{_\lm})+r_{_\lm}u_0'(r_{_\lm}),
\end{align}
which is continuous in $[0,\lm_+]$, strictly positive as $\lm\to 0^+$ (i.e. $r_{_\lm}\to 0^+$) and strictly negative as $\lm\to \lm_+^{-}$ (i.e. $r_{_\lm}\to R_N^-$).
The equation for~$u_0$ in higher dimension becomes
\begin{align}
    u_0''+\frac{N-1}{r}u_0' = - u_0^p.
\end{align}
Hence we have
\begin{align}
    g^{'}(\lm)
    =& \left(\frac{2}{(p-1)}u_0'(r_{_\lm})+u_0'(r_{_\lm})+r_{_\lm}u_{0}''(r_{_\lm})\right)r^{'}_{_\lm} \\
    =& \left(\frac{2}{(p-1)}u_0'(r_{_\lm})-r_{_\lm}u_0^p-(N-2)u_0'(r_{_\lm})\right)r^{'}_{_\lm} \\
    =& \left( \frac{2}{(p-1)}\left(1-\frac{p-1}{p_{_N}-1}\right)u_0'(r_{_\lm})-r_{_\lm}u_0^p\right)r^{'}_{_\lm}<0,
\end{align}
and the conclusion follows, in this case as well.
\end{proof}

\bibliographystyle{siam}
\bibliography{Plasma}

\end{document}